\crefname{defin}{Definition}{Definitions}
\crefname{eg}{Example}{Examples}
\crefname{lem}{Lemma}{Lemmas}
\crefname{theo}{Theorem}{Theorems}
\crefname{equation}{}{}
\crefname{enumi}{}{}
\newcommand{\rom}[1]{\uppercase\expandafter{\romannumeral #1\relax}}
\newcommand\N{\mathbb{N}}
\newcommand\kk{\Bbbk}
\newcommand\one{\mathbbm{1}}
\newcommand\cC{\mathcal{C}}
\newcommand\cS{\mathcal{S}}
\newcommand\cR{\mathcal{R}}
\newcommand\cB{\mathcal{B}}
\newcommand\cD{\mathcal{D}}
\newcommand\PR{\mathcal{PR}}
\newcommand\RB{\mathcal{RB}}
\newcommand\TL{\mathcal{TL}}
\newcommand\cM{\mathcal{M}}
\newcommand\op{\text{op}}
\newcommand\rev{\text{rev}}
\newcommand\Par{\mathcal{P}\mathit{ar}}
\DeclareMathOperator{\End}{End}
\DeclareMathOperator{\Hom}{Hom}
\DeclareMathOperator{\id}{id}
\DeclareMathOperator{\Ob}{Ob}
\newcommand{\pd}[1]{\filldraw[black] (#1) circle (1.5pt)} 
\newcommand{\braidto}{to[out=up,in=down]}
\tikzset{anchorbase/.style={>=To,baseline={([yshift=-0.5ex]current bounding box.center)}}}
\newtheorem{theo}{Theorem}[section]
\newtheorem{prop}[theo]{Proposition}
\newtheorem{lem}[theo]{Lemma}
\newtheorem{cor}[theo]{Corollary}
\theoremstyle{definition}
\newtheorem{defin}[theo]{Definition}
\newtheorem{rem}[theo]{Remark}
\numberwithin{equation}{section}
\renewcommand{\theenumi}{\alph{enumi}}
  \newcommand{\acomments}[1]{
    \ \\
    {\color{red}
      \textbf{AS:} #1
    }
    \ \\
    }
  \newcommand{\scomments}[1]{
    \ \\
    {\color{red}
      \textbf{SNL:} #1
    }
    \ \\
    }
  \newcommand{\acomments}[1]{}
  \newcommand{\scomments}[1]{}
  \newcommand{\details}[1]{
      \ \\
      {\color{OliveGreen}
        \textbf{Details:} #1
      }
      \\
  }
  \newcommand{\details}[1]{}
\begin{document}
%

\title{Presentations of diagram categories}

\author{Mengwei Hu}
\address{
  Department of Mathematics and Statistics \\
  University of Ottawa \\
  Ottawa, ON K1N 6N5, Canada 
}
\address{
  College of Mathematics \\
  Sichuan University \\
  Chengdu, Sichuan, 610065, China
}
\email{mengwei.hu0616@gmail.com}

\begin{abstract}
  We describe the planar rook category, the rook category, the rook-Brauer category, and the Motzkin category in terms of generators and relations. We show that the morphism spaces of these categories have linear bases given by planar rook diagrams, rook diagrams, rook-Brauer diagrams, and Motzkin diagrams.
\end{abstract}

\subjclass[2010]{18D10}

\keywords{Rook category, planar rook category, rook-Brauer category, Motzkin category, partition category, monoidal category, diagrammatic algebra}

\ifboolexpr{togl{comments} or togl{details}}{%
  {\color{magenta}DETAILS OR COMMENTS ON}
}{%
}

\maketitle
\thispagestyle{empty}

\tableofcontents
\section{Introduction}\label{introduction}

Let $t$ be an element in an arbitrary commutative ring $\kk$. Let $k,l$ be two nonnegative integers. The partition algebra $P_k(t)$ arose in the work of P. Martin \cite{Mar94} and later, independently, in the work of V. Jones \cite{Jon94}. The partition algebra $P_k(t)$ has a linear basis given by all partition diagrams of type $\binom{k}{k}$. A \emph{partition} of type $\binom{l}{k}$ is a set partition of $\{1,\cdots,k,1',\cdots,l'\}$, and the elements of the partition are called \emph{blocks}. For convenience, we represent a partition of type $\binom{l}{k}$ by a diagram called a \emph{partition diagram} with $k$ vertices in the bottom row, labeled by $1,\cdots,k$ from left to right, and $l$ vertices in the top row, labeled by $1',\cdots,l'$ from left to right. We draw edges connecting vertices so that the connected components of the diagram are the blocks of the partition. For example,
\[
  \begin{tikzpicture}[anchorbase]
    \pd{0.7,1} node[anchor=south] {$1'$};
    \pd{1.4,1} node[anchor=south] {$2'$};
    \pd{2.1,1} node[anchor=south] {$3'$};
    \pd{2.8,1} node[anchor=south] {$4'$};
    \pd{3.5,1} node[anchor=south] {$5$'};  
    \pd{0,0} node[anchor=north] {$1$};
    \pd{0.7,0} node[anchor=north] {$2$};
    \pd{1.4,0} node[anchor=north] {$3$};
    \pd{2.1,0} node[anchor=north] {$4$};
    \pd{2.8,0} node[anchor=north] {$5$};
    \pd{3.5,0} node[anchor=north] {$6$};
    \pd{4.2,0} node[anchor=north] {$7$};
    \draw (0,0) \braidto (0.7,1);
    \draw (1.4,0) \braidto (0.7,1);
    \draw (0.7,0) to[out=up,in=up,looseness=1.5] (2.1,0);
    \draw (2.8,0) \braidto (2.1,1);
    \draw (2.8,0) \braidto (3.5,1);
    \draw (4.2,0) ..controls (3.5,0.8) and (2.1,0.2).. (1.4,1);
  \end{tikzpicture}
\]
is a partition diagram of type $\binom{5}{7}$. Composition of partition diagrams is given by vertical stacking; see \cref{partitioncategorysubsection} for more details. A partition is \emph{planar} if it can be represented as a diagram without edge crossings inside of the rectangle formed by its vertices. There are many interesting subalgebras of $P_k(t)$.
\begin{align*}
  PP_k(t) &= \{D\in P_k(t)\ |\ D\ \text{is planar}\},\\
  RB_k(t) &= \{D\in P_k(t)\ |\ \text{all blocks of }D\text{ have size at most 2}\},\\
  M_k(t) &= RB_k(t)\cap PP_k(t),\\
  B_k(t) &= \{D\in P_k(t)\ |\ \text{all blocks of}\ D\ \text{have size 2}\},\\
  TL_k(t) &= B_k(t)\cap PP_k(t),\\
  R_k(t) &= RB_k(t)\cap\{D\in P_k(t)\ |\ \text{vertices in the same block of}\ D\ \text{lie in distinct rows}\},\\
  PR_k(t) &=R_k(t)\cap PP_k(t).
\end{align*}
The subalgebras $RB_k(t)$, $M_k(t)$, $B_k(t)$, $TL_k(t)$, $R_k(t)$, $PR_k(t)$ are called the \emph{rook-Brauer algebra}, the \emph{Motzkin algebra}, the \emph{Brauer algebra}, the \emph{Temperley-Lieb algebra}, the \emph{rook algebra}, and the \emph{planar rook algebra} respectively. Similarly, we can give the definitions of \emph{rook-Brauer diagrams}, \emph{Motzkin diagrams}, \emph{Brauer diagrams}, \emph{Temperley-Lieb diagrams}, \emph{rook diagrams}, and \emph{planar rook diagrams}, which do not require the bottom rows and the top rows have equal numbers of vertices. 

Presentations of the algebras $P_k(t), R_k(t), PR_k(t), B_k(t), TL_k(t), RB_k(t)$, and $M_k(t)$ can be found in \cite{Hal05}, \cite{Lip96}, \cite{Her06}, \cite{Kud06}, \cite{Kau90}, \cite{Hal14}, and \cite{Ben14} respectively. However, these presentations involve many relations, and the number of relations increases as $k$ increases. A more efficient way to study such algebras is to define strict $\kk$-linear monoidal categories whose endomorphism algebras are precisely these algebras. For instance, the definitions of the partition category, the Brauer category, the Temperley-Lieb category can be found in \cite{Del07}, \cite{Leh15}, \cite{Che14} respectively. However, the rook category, the planar rook category, the rook-Brauer category, and the Motzkin category have not appeared in the literature yet. 

This paper is an attempt to give the definitions of the rook category, the planar rook category, the rook-Brauer category, and the Motzkin category in terms of generators and relations. The organization of this paper is as follows. In \cref{lmcategorysection}, we review strict $\kk$-linear monoidal categories and string diagrams. In \cref{partitioncategorysection}, we recall the definition of the partition category $\Par(t)$, and we give some decompositions of partition diagrams. In \cref{prcategorysection}, we define the planar rook category $\PR(t)$ in terms of generators and relations, and we show that the morphism spaces of the planar rook category $\PR(t)$ have linear bases given by all planar rook diagrams.  Following similar procedures, we discuss the rook category $\cR(t)$, the rook-Brauer category $\RB(t)$, and the Motzkin category $\cM(t)$ in \cref{rookcategorysection,rbcategorysection,motzkincategorysection}.

All the categories we define in this paper are subcategories of the partition category. Recently, in \cite{Sav19}, the partition category was embedded in the Heisenberg category. Therefore, all the categories we define here are also embedded in the Heisenberg category.

\subsection*{Notation}
Throughout, let $\kk$ denotes an arbitrary commutative ring. Let $\N$ denote the additive monoid of nonnegative integers.

\subsection*{Acknowledgements}
This research project was supported by the Mitacs Globalink, the China Scholarship Council, and the University of Ottawa, and supervised by Professor Alistair Savage. The author would like to express her sincere gratitude to Professor Alistair Savage for his patience and guidance throughout the whole project.

\section{Strict $\kk$-linear monoidal categories and string diagrams\label{lmcategorysection}}

\subsection{Strict $\kk$-linear monoidal categories}

We give here a quick review of strict $\kk$-linear monoidal categories. We follow the definitions given in \cite[\S1]{Tur17} and \cite[\S3]{Sav18}.

\begin{defin}\label{strictmonoidalcategory}
  A \emph{strict monoidal category} $(\cC,\otimes,\one)$ is a category $\cC$ equipped with
  \begin{itemize}
    \item a bifunctor (the \emph{tensor product}) $\otimes\colon\cC\times\cC\to\cC$, and
    \item a \emph{unit object} $\one$, 
  \end{itemize}
  such that, for all objects $X,Y$, and $Z$ of $\cC$, we have
  \begin{gather}
    \label{ax1} (X\otimes Y)\otimes Z=X\otimes(Y\otimes Z), \\
    \label{ax2} \one\otimes X=X=X\otimes\one,
  \end{gather}
  and, for all morphisms $f,g,$ and $h$ of $\cC$, we have
  \begin{gather}
    \label{ax3} (f\otimes g)\otimes h=f\otimes(g\otimes h), \\
    \label{ax4} 1_{\one}\otimes f=f=f\otimes1_{\one}.
  \end{gather}
  Here, and throughout the document, $1_X$ denotes the identity endomorphism on object $X$. For convenience, we often denote the strict monoidal category $(\cC,\otimes,\one)$ simply by $\cC$.
\end{defin}

Note that, in a (not necessarily strict) monoidal category, the equalities in \cref{strictmonoidalcategory} are replaced by isomorphisms, and one imposes certain coherence laws. (See \cite[\S1.2.1]{Tur17} for more details.) However, one version of Mac Lane's coherence states that every monoidal category is monoidally equivalent to a strict one. (See \cite[\S10.5]{Kas95} or \cite{Sch01} for a proof.) Therefore, we do not lose much by assuming monoidal categories are strict.

\begin{defin}\label{klinearcategory}
  A \emph{$\kk$-linear category} is a category $\cC$ such that
  \begin{itemize}
    \item for any two objects $X$ and $Y$ of $\cC$, the morphism space $\Hom_{\cC}(X,Y)$ is a $\kk$-module,
    \item composition of morphisms is $\kk$-bilinear:
    \[f\circ(\alpha g+\beta h)=\alpha(f\circ g)+\beta(f\circ h),\]
    \[(\alpha f+\beta g)\circ h=\alpha(f\circ h)+\beta(g\circ h),\]
    for all $\alpha,\beta\in\kk$ and morphisms $f,g,$ and $h$ such that the above operations are defined.
  \end{itemize}
\end{defin}

A \emph{strict $\kk$-linear monoidal category} is a category that is both strict monoidal and $\kk$-linear, and such that the tensor product of morphisms is $\kk$-bilinear:
  \[f\otimes(\alpha g+\beta h)=\alpha(f\otimes g)+\beta(f\otimes h),\]
  \[(\alpha f+\beta g)\otimes h=\alpha(f\otimes h)+\beta(g\otimes h),\]
for all $\alpha,\beta\in\kk$ and morphisms $f,g,$ and $h$. 

\begin{defin}\label{monoidalfunctor}
  Let $(\cC,\otimes,\one_{\cC})$ and $(\cD,\otimes,\one_{\cD})$ be two strict $\kk$-linear monoidal categories. A \emph{strict $\kk$-linear monoidal functor} from $\cC$ to $\cD$ is a functor $F\colon\cC\to\cD$ such that
  \begin{itemize}
    \item $F(X\otimes Y)=F(X)\otimes F(Y),\ \text{for all objects}\ X,Y$,
    \item $F(f\otimes g)=F(f)\otimes F(g),\ \text{for all morphisms}\ f,g$,
    \item $F(\one_\cC)=\one_{\cD}$,
  \end{itemize}
  and its action on the morphism space is $\kk$-linear, that is, for all objects $X,Y$, the map
  \[
    \Hom_{\cC}(X,Y)\to\Hom_{\cD}(F(X),F(Y)),\quad f\mapsto F(f)
  \]
 is $\kk$-linear.
\end{defin}

\subsection{String diagrams}

There is a very intuitive way to represent strict monoidal categories --- \emph{string diagrams}. We give a brief introduction to string diagrams here. Interested readers should refer to \cite[\S2.1]{Tur17} or \cite[\S2.3]{Sav18} for more details.

Given a strict monoidal category $\cC$, the identity endomorphism $\id_X$ of an object $X$ of $\cC$, a morphism $f\colon X\to Y$ in $\cC$, the composition of two morphisms $f\colon X\to Y$ and $g\colon Y\to Z$, the tensor product of two morphisms $f\colon X\to Y$ and $g\colon Z\to W$ can be drawn as the following diagrams:
\[
  \id_X\ =\
  \begin{tikzpicture}[anchorbase]
    \draw(0,0) -- (0,1.2);
    \filldraw[black] (0,1.2);
    \filldraw[black] (0,0) node[anchor=north]{$X$};
  \end{tikzpicture}
  \ , \qquad f\ =\ 
  \begin{tikzpicture}[anchorbase]
    \draw(0,0) -- (0,1.2);
    \filldraw[black] (0,1.2) node[anchor=south]{$Y$};
    \filldraw[black] (0,0) node[anchor=north]{$X$};
    \filldraw[color=black, fill=white] (0,0.6) circle (0.25) node{$f$};
  \end{tikzpicture}
  \ , \qquad f\circ g\ =\ 
  \begin{tikzpicture}[anchorbase]
    \draw(0,0) -- (0,2);
    \filldraw[color=black, fill=white] (0,1.33) circle (0.25) node{$f$};
    \filldraw[color=black, fill=white] (0,0.67) circle (0.25) node{$g$};
    \filldraw[black] (0,2) node[anchor=south]{$Z$};
    \filldraw[black] (0,0) node[anchor=north]{$X$};
  \end{tikzpicture}
  \ , \qquad f\otimes g\ =\ 
  \begin{tikzpicture}[anchorbase]
    \draw(0,0) -- (0,1.2);
    \draw(0.7,0) -- (0.7,1.2); 
    \filldraw[color=black, fill=white] (0,0.6) circle (0.25) node{$f$};
    \filldraw[color=black, fill=white] (0.7,0.6) circle (0.25) node{$g$};
    \filldraw[black] (0,1.2) node[anchor=south]{$Y$};
    \filldraw[black] (0,0) node[anchor=north]{$X$};
    \filldraw[black] (0.7,1.2) node[anchor=south]{$W$};
    \filldraw[black] (0.7,0) node[anchor=north]{$Z$};
  \end{tikzpicture}
\]
Sometimes, we would omit the object labels $X,Y,Z,W,\cdots$ when they are clear or unimportant. Furthermore, we would like to mention the important \emph{interchange law} for strict monoidal categories:
\begin{equation}\label{interchangelaw}
  \begin{tikzpicture}[anchorbase]
    \draw(0,0) -- (0,2.1);
    \draw(0.7,0) -- (0.7,2.1); 
    \filldraw[color=black, fill=white] (0,0.7) circle (0.25) node{$f$};
    \filldraw[color=black, fill=white] (0.7,1.4) circle (0.25) node{$g$};
  \end{tikzpicture}
  \ =\
  \begin{tikzpicture}[anchorbase]
    \draw(0,0) -- (0,2.1);
    \draw(0.7,0) -- (0.7,2.1); 
    \filldraw[color=black, fill=white] (0,1.05) circle (0.25) node{$f$};
    \filldraw[color=black, fill=white] (0.7,1.05) circle (0.25) node{$g$};
  \end{tikzpicture}
  \ =\
  \begin{tikzpicture}[anchorbase]
    \draw(0,0) -- (0,2.1);
    \draw(0.7,0) -- (0.7,2.1); 
    \filldraw[color=black, fill=white] (0,1.4) circle (0.25) node{$f$};
    \filldraw[color=black, fill=white] (0.7,0.7) circle (0.25) node{$g$};
  \end{tikzpicture}\ .
\end{equation}

In a strict monoidal category, we have two kinds of compositions: the traditional composition, which is called the \emph{vertical composition}, and the tensor product, which is called the \emph{horizontal composition}. When drawing string diagrams, the vertical composition $f\circ g$ is obtained by stacking $f$ above $g$, and the horizontal composition $f\otimes g$ is obtained by juxtapositing $g$ on the right of $f$.

\subsection{Presentations}

Just as one can define associative algebras via generators and relations, one can also define strict $\kk$-linear monoidal categories in this way. We follow the definition given in \cite[\S3]{Sav18}.

To define a strict $\kk$-linear monoidal category via a presentation, we should specify a set of generating objects, a set of generating morphisms, and some relations on \emph{morphisms} (not on objects!). If $\{X_i\colon i\in I\}$ is our set of generating objects, then an arbitrary object in our category is a finite tensor product of these generating objects:
$$X_{i_1}\otimes X_{i_2}\otimes\cdots\otimes X_{i_n},\ i_1,i_2,\cdots i_n\in I,\ n\in\N.$$
We think of $\one$ as being the ``empty tensor product''. If $\{f_j\colon j\in J\}$ is our set of generating morphisms, then we can take arbitrary tensor products and compositions (when domains and codomains match) of these generators, e.g.,
$$(f_{j_1}\otimes f_{j_2}\otimes f_{j_3})\circ(f_{j_4}\otimes f_{j_4}),\ j_1,j_2,j_3,j_4,j_5\in J.$$

When we define a strict $\kk$-linear monoidal category $\cC$ via a presentation, we get a very intuitive interpretation of both the vertical and horizontal compositions; however, we do not get much information about the $\kk$-linear structure on $\cC$. For example, we do not know whether the morphism space is finite-dimensional, or whether it has a linear basis without further discussions.

As a concrete example, we introduce the symmetric group category to the readers.

\begin{defin}\label{symmetricgroupcategory} 
  The \emph{symmetric group category} $\cS$ is the strict $\kk$-linear monoidal category with
  \begin{itemize}
    \item one generating object: $|$,
    \item one generating morphism:
    \[
      \begin{tikzpicture}[anchorbase]
        \draw (0,0) to (0.7,0.7);
        \draw (0.7,0) to (0,0.7);
      \end{tikzpicture}
      \colon
      |\otimes |\to |\otimes|
      \ ,
    \]
    \item two relations:
    \[
      \begin{tikzpicture}[anchorbase]
        \draw (0,0) \braidto (0.7,0.7) \braidto (0,1.4);
        \draw (0.7,0) \braidto (0,0.7) \braidto (0.7,1.4);
      \end{tikzpicture}
      \ =\ 
      \begin{tikzpicture}[anchorbase]
        \draw (0,0) to (0,1.4);
        \draw (0.5,0) to (0.5,1.4);
      \end{tikzpicture}
      \quad        \text{and}\quad
      \begin{tikzpicture}[anchorbase]
        \draw (0,0) \braidto (0.7,0.7) \braidto (1.4,1.4) to (1.4,2.1);
        \draw (0.7,0) \braidto (0,0.7) to (0,1.4) \braidto (0.7,2.1);
        \draw (1.4,0) to (1.4,0.7) \braidto (0.7,1.4) \braidto (0,2.1);
      \end{tikzpicture}
      \ =\ 
      \begin{tikzpicture}[anchorbase]
        \draw (0,0) to (0,0.7) \braidto (0.7,1.4) \braidto (1.4,2.1);
        \draw (0.7,0) \braidto (1.4,0.7) to (1.4,1.4) \braidto (0.7,2.1);
        \draw (1.4,0) \braidto (0.7,0.7) \braidto (0,1.4) to (0,2.1);
      \end{tikzpicture}
      \ .
    \]
  \end{itemize} 
\end{defin}

Let $S_n$ denote the symmetric group of order $n$. Then the endomorphism algebra $\End_{\cS}(|^{\otimes n})$ is isomorphic to the group algebra $\kk S_n$. (See \cite[\S6.2]{Liu18} for a precise proof.)

\section{The partition category $\Par(t)$\label{partitioncategorysection}}

We recall the definition of the partition category following \cite[\S2]{Com16} and \cite[\S2]{Sav19}.

\subsection{Partition diagrams\label{partitiondiagramssubsection}}
We have defined partition diagrams in \cref{introduction}. Note that there may be more than one way to draw a partition. For example, the partition $\big\{ \{1,3,1'\} , \{2,4\} , \\\{5,3',5'\} , \{7,2\} , \{6\} , \{4'\} \big\}$ of type $\binom{5}{7}$ can be depicted as
\[
  \begin{tikzpicture}[anchorbase]
    \pd{0.7,1} node[anchor=south] {$1'$};
    \pd{1.4,1} node[anchor=south] {$2'$};
    \pd{2.1,1} node[anchor=south] {$3'$};
    \pd{2.8,1} node[anchor=south] {$4'$};
    \pd{3.5,1} node[anchor=south] {$5$'};  
    \pd{0,0} node[anchor=north] {$1$};
    \pd{0.7,0} node[anchor=north] {$2$};
    \pd{1.4,0} node[anchor=north] {$3$};
    \pd{2.1,0} node[anchor=north] {$4$};
    \pd{2.8,0} node[anchor=north] {$5$};
    \pd{3.5,0} node[anchor=north] {$6$};
    \pd{4.2,0} node[anchor=north] {$7$};
    \draw (0,0) \braidto (0.7,1);
    \draw (1.4,0) \braidto (0.7,1);
    \draw (0.7,0) to[out=up,in=up,looseness=1.5] (2.1,0);
    \draw (2.8,0) \braidto (2.1,1);
    \draw (2.8,0) \braidto (3.5,1);
    \draw (4.2,0) ..controls (3.5,0.8) and (2.1,0.2).. (1.4,1);
  \end{tikzpicture}
  \ \text{or}\ 
  \begin{tikzpicture}[anchorbase]
    \pd{0.7,1} node[anchor=south] {$1'$};
    \pd{1.4,1} node[anchor=south] {$2'$};
    \pd{2.1,1} node[anchor=south] {$3'$};
    \pd{2.8,1} node[anchor=south] {$4'$};
    \pd{3.5,1} node[anchor=south] {$5'$};  
    \pd{0,0} node[anchor=north] {$1$};
    \pd{0.7,0} node[anchor=north] {$2$};
    \pd{1.4,0} node[anchor=north] {$3$};
    \pd{2.1,0} node[anchor=north] {$4$};
    \pd{2.8,0} node[anchor=north] {$5$};
    \pd{3.5,0} node[anchor=north] {$6$};
    \pd{4.2,0} node[anchor=north] {$7$};
    \draw (0,0) \braidto (0.7,1);
    \draw (0,0) to[out=up,in=up,looseness=0.8] (1.4,0);
    \draw (0.7,0) to[out=up,in=up,looseness=0.8] (2.1,0);
    \draw (2.1,1) to[out=down,in=down,looseness=0.6] (3.5,1);
    \draw (2.8,0) \braidto (3.5,1);
    \draw (4.2,0) ..controls (3.5,0.8) and (2.1,0.2).. (1.4,1);
  \end{tikzpicture}.
\]
However, as long as two partition diagrams give rise to the same set partition, we consider them to be equivalent. We may omit the labels of the vertices when drawing partition diagrams if no confusion will be caused. We write $D\colon k\to l$ to indicate that $D$ is a partition of type $\binom{l}{k}$.

A block of size $1$ is called a \emph{singleton}. A vertex in a singleton is called an \emph{isolated vertex}. Isolated vertices will play a vital role in our discussions of diagram categories. There is by convention just one partition of type $\binom{0}{0}$, and we denote the unique partition diagrams of types $\binom{1}{0}$ and $\binom{0}{1}$ by
\[
  \begin{tikzpicture}[anchorbase]
    \pd{0,0};
    \draw (0,0) to (0,-0.25);
  \end{tikzpicture}
  \colon 0 \to 1
  \qquad \text{and} \qquad
  \begin{tikzpicture}[anchorbase]
    \pd{0,0};
    \draw (0,0) to (0,0.25);
  \end{tikzpicture}
  \colon 1 \to 0.
\]

Given two partitions $D'\colon m\to k$, $D\colon k\to l$, one can stack $D$ on top of $D'$ to obtain a diagram $\begin{matrix}D \\ D'\end{matrix}$ with three rows of vertices. Let $\alpha(D,D')$ denote the number of connected components in $\begin{matrix}D \\ D'\end{matrix}$ all of whose vertices are in the middle row. Let $D\star D'$ denote a partition diagram of type $\binom{l}{m}$ with the following property: vertices are in the same block of $D\star D'$ if and only if the corresponding vertices in the top and bottom rows of $\begin{matrix}D\\ D'\end{matrix}$ are in the same block.

\subsection{The partition category $\Par(t)$\label{partitioncategorysubsection}}

Fix $t\in\kk$. The \emph{partition category} $\Par(t)$ is the strict $\kk$-linear monoidal category whose objects are nonnegative integers and, given two objects $k,l$ of $\Par(t)$, the morphism space $\Hom_{\Par(t)}(k,l)$ consists of all formal $\kk$-linear combinations of partitions of type $\binom{l}{k}$. The vertical composition $\circ$ is given by 
\[
  D\circ D'=t^{\alpha(D,D')}D\star D'
\]
for composable partition diagrams $D$ and $D'$, and extended by linearity. The tensor product $\otimes$ is given as follows.
\begin{itemize}
  \item on objects: $k\otimes l=k+l,\ k,l\in\N,$ with a unit object $0$,
  \item on morphisms: $D\otimes D'$ is obtained by horizontally juxtapositing $D$ to the right of $D'$.
\end{itemize}
For example, if 
\[
  D\ =\
  \begin{tikzpicture}[anchorbase]
    \pd{0.7,1};\pd{1.4,1};\pd{2.1,1};\pd{2.8,1};\pd{3.5,1};  
    \pd{0,0};\pd{0.7,0};\pd{1.4,0};\pd{2.1,0};\pd{2.8,0};\pd{3.5,0};\pd{4.2,0};
    \draw (0,0) \braidto (0.7,1);
    \draw (1.4,0) \braidto (0.7,1);
    \draw (0.7,0) to[out=up,in=up,looseness=1.5] (2.1,0);
    \draw (2.8,0) \braidto (2.1,1);
    \draw (2.8,0) \braidto (3.5,1);
    \draw (4.2,0) ..controls (3.5,0.8) and (2.1,0.2).. (1.4,1);
  \end{tikzpicture}
  \qquad \text{and} \qquad D'\ =\ 
  \begin{tikzpicture}[anchorbase]
    \pd{0,1};\pd{0.7,1};\pd{1.4,1};\pd{2.1,1};\pd{2.8,1};\pd{3.5,1};\pd{4.2,1};
    \pd{1.05,0};\pd{1.75,0};\pd{2.45,0};\pd{3.15,0};
    \draw (1.05,0) ..controls (0.5,0.2).. (0,1);
    \draw (0.7,1) to[out=down,in=down,looseness=1.5] (2.1,1);
    \draw (2.45,0) ..controls (3.1,0.25).. (4.2,1);
    \draw (3.15,0) \braidto (2.8,1);
  \end{tikzpicture}
\]
then
\[
  \begin{matrix}D\\D'\end{matrix}\ =\ 
  \begin{tikzpicture}[anchorbase]
    \pd{0.5,0.7};\pd{1,0.7};\pd{1.5,0.7};\pd{2,0.7};\pd{2.5,0.7};
    \pd{0,0};\pd{0.5,0};\pd{1,0};\pd{1.5,0};\pd{2,0};\pd{2.5,0};\pd{3,0};
    \pd{0.75,-0.7};\pd{1.25,-0.7};\pd{1.75,-0.7};\pd{2.25,-0.7};
    \draw (0,0) \braidto (0.5,0.7);
    \draw (1,0) \braidto (0.5,0.7);
    \draw (0.5,0) to[out=up,in=up,looseness=1] (1.5,0);
    \draw (2,0) \braidto (1.5,0.7);
    \draw (2,0) \braidto (2.5,0.7);
    \draw (3,0) ..controls (2.3,0.5) and (1.7,0.2).. (1,0.7);
    \draw (0.75,-0.7) ..controls (0.4,-0.5).. (0,0);
    \draw (0.5,0) to[out=down,in=down,looseness=1] (1.5,0);
    \draw (1.75,-0.7) ..controls (2.1,-0.4).. (3,0);
    \draw (2.25,-0.7) \braidto (2,0); 
  \end{tikzpicture}
  \ ,\ 
  D\star D'\ =\
  \begin{tikzpicture}[anchorbase]
    \pd{0.7,0};\pd{1.4,0};\pd{2.1,0};\pd{2.8,0};\pd{3.5,0};  
    \pd{1.05,-1};\pd{1.75,-1};\pd{2.45,-1};\pd{3.15,-1};
    \draw (1.05,-1) \braidto (0.7,0);
    \draw (2.45,-1) \braidto (1.4,0);
    \draw (3.15,-1) \braidto (2.1,0);
    \draw (3.15,-1) \braidto (3.5,0);
  \end{tikzpicture}
  \ ,\ \text{and}\ 
  D\circ D'\ =\ t^2\ 
  \begin{tikzpicture}[anchorbase]
    \pd{0.7,0};\pd{1.4,0};\pd{2.1,0};\pd{2.8,0};\pd{3.5,0};   
    \pd{1.05,-1};\pd{1.75,-1};\pd{2.45,-1};\pd{3.15,-1};
    \draw (1.05,-1) \braidto (0.7,0);
    \draw (2.45,-1) \braidto (1.4,0);
    \draw (3.15,-1) \braidto (2.1,0);
    \draw (3.15,-1) \braidto (3.5,0);
  \end{tikzpicture}
\]

Fix $k\in\N$. The endomorphism algebra $\End_{\Par(t)}(k)$ is the \emph{partition algebra} $P_k(t)$ given in \cref{introduction}.

There are two opposite categories of the partition category $\Par(t)$, denoted by $\Par(t)^{\op}$, which reverses the vertical composition, and $\Par(t)^{\otimes\op}$, which reverses the horizontal composition; see \cite[\S1.2.2]{Tur17} for more details. The reflection of a partition diagram in a horizontal or vertical line is still a partition diagram. We can use this symmetry to define two involutions from the partition category $\Par(t)$ to its opposite categories, which will simplify definitions and proofs a lot. The first involution $^*\colon \Par(t)\to\Par(t)^{\op}$ takes each object to itself and takes each diagram to its reflection in a horizontal line, and the second involution $^{\sharp}\colon \Par(t)\to\Par(t)^{\otimes\op}$ takes each object to itself and takes each diagram to its reflection in a vertical line.\color{black}

For example, if
\[
  D\ =\
  \begin{tikzpicture}[anchorbase]
    \pd{0.7,1};\pd{1.4,1};\pd{2.1,1};\pd{2.8,1};\pd{3.5,1};  
    \pd{0,0};\pd{0.7,0};\pd{1.4,0};\pd{2.1,0};\pd{2.8,0};\pd{3.5,0};\pd{4.2,0};
    \draw (0,0) \braidto (0.7,1);
    \draw (1.4,0) \braidto (0.7,1);
    \draw (0.7,0) to[out=up,in=up,looseness=1.4] (2.1,0);
    \draw (2.8,0) \braidto (2.1,1);
    \draw (2.8,0) \braidto (3.5,1);
    \draw (4.2,0) ..controls (3.5,0.8) and (2.1,0.2).. (1.4,1);
  \end{tikzpicture}\ ,
\]
then 
\[
  D^*\ =\ 
  \begin{tikzpicture}[anchorbase]
    \pd{0,1};\pd{0.7,1};\pd{1.4,1};\pd{2.1,1};\pd{2.8,1};\pd{3.5,1};\pd{4.2,1};
    \pd{0.7,0};\pd{1.4,0};\pd{2.1,0};\pd{2.8,0};\pd{3.5,0};
    \draw (0.7,0) \braidto (0,1);
    \draw (0.7,0) \braidto (1.4,1);
    \draw (2.1,0) \braidto (2.8,1);
    \draw (3.5,0) \braidto (2.8,1);
    \draw (1.4,0) ..controls (2.1,0.8) and (3.5,0.2).. (4.2,1);
    \draw (0.7,1) to[out=down,in=down,looseness=1.4] (2.1,1);
  \end{tikzpicture}
  \ ,\qquad D^{\sharp}\ =\ 
  \begin{tikzpicture}[anchorbase]
    \pd{0.7,1};\pd{1.4,1};\pd{2.1,1};\pd{2.8,1};\pd{3.5,1};  
    \pd{0,0};\pd{0.7,0};\pd{1.4,0};\pd{2.1,0};\pd{2.8,0};\pd{3.5,0};\pd{4.2,0};
    \draw (1.4,0) \braidto (0.7,1);
    \draw (1.4,0) \braidto (2.1,1);
    \draw (2.8,0) \braidto (3.5,1);
    \draw (4.2,0) \braidto (3.5,1);
    \draw (0,0) ..controls (0.7,0.8) and (2.1,0.2).. (2.8,1);
    \draw (2.1,0) to[out=up,in=up,looseness=1.4] (3.5,0); 
  \end{tikzpicture}\ .
\]
It is easy to check that $(D^*)^*=D$, $(D_1\circ D_2)^*=D_2^*\circ D_1^*$, $(D_1\otimes D_2)^*=(D_1)^*\otimes (D_2)^*$, and that $(D^{\sharp})^{\sharp}=D$, $(D_1\circ D_2)^{\sharp}=D_1^{\sharp}\circ D_2^{\sharp}$, $(D_1\otimes D_2)^{\sharp}=D_2^{\sharp}\otimes D_1^{\sharp}$. Moreover, It is useful to point out that the particular sets of diagrams defined in \cref{introduction} are left invariant by the involutions $^*$ and $^{\sharp}$.

\begin{defin}\label{definskeleton}
Let $D$ be a partition of type $\binom{l}{k}$. We define the \emph{skeleton} of $D$ to be the partition obtained by removing all the singletons from $D$, and we denote it by $\widetilde{D}$. 
\end{defin}
 
For example, the skeleton of the partition diagram
\[
  D\ =\
  \begin{tikzpicture}[anchorbase]
    \pd{0.7,1};\pd{1.4,1};\pd{2.1,1};\pd{2.8,1};\pd{3.5,1};  
    \pd{0,0};\pd{0.7,0};\pd{1.4,0};\pd{2.1,0};\pd{2.8,0};\pd{3.5,0};\pd{4.2,0};
    \draw (0,0) \braidto (0.7,1);
    \draw (1.4,0) \braidto (0.7,1);
    \draw (0.7,0) to[out=up,in=up,looseness=1.4] (2.1,0);
    \draw (2.8,0) \braidto (2.1,1);
    \draw (2.8,0) \braidto (3.5,1);
    \draw (4.2,0) ..controls (3.5,0.8) and (2.1,0.2)..  (1.4,1);
  \end{tikzpicture}
\]
is
\[
  \widetilde{D}\ =\ 
  \begin{tikzpicture}[anchorbase]
    \pd{0.7,1};\pd{1.4,1};\pd{2.1,1};\pd{3.1,1};  
    \pd{0,0};\pd{0.7,0};\pd{1.4,0};\pd{2.1,0};\pd{2.6,0};\pd{3.6,0};
    \draw (0,0) \braidto (0.7,1);
    \draw (1.4,0) \braidto (0.7,1);
    \draw (0.7,0) to[out=up,in=up,looseness=1.4] (2.1,0);
    \draw (2.6,0) \braidto (2.1,1);
    \draw (2.6,0) \braidto (3.1,1);
    \draw (3.6,0) ..controls (3.0,0.8) and (2.1,0.2)..  (1.4,1);
  \end{tikzpicture}
  \ .
\]

\begin{prop}\label{partitiondecompose}
  Every partition diagram $D$ has a decomposition $D=P_1\circ\widetilde{D}\circ P_2$ where $\widetilde{D}$ is the skeleton of $D$ and $P_1,P_2$ are planar rook diagrams. 
\end{prop}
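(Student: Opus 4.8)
The plan is to build the two planar rook diagrams $P_1$ and $P_2$ explicitly as order-preserving connectors that reattach the vertices of the skeleton $\widetilde{D}$ to their original positions in $D$, and then to check that stacking them recovers $D$ with trivial scalar. Write $D\colon k\to l$, and let the non-isolated bottom vertices of $D$ occur at positions $i_1<\cdots<i_p$ and the non-isolated top vertices at positions $j_1<\cdots<j_q$, so that $\widetilde{D}\colon p\to q$. I would define $P_2\colon k\to p$ to be the diagram whose only edges are $\{i_a,a'\}$ for $a=1,\dots,p$, leaving the isolated bottom vertices of $D$ as singletons; dually, I would define $P_1\colon q\to l$ to have edges $\{b,(j_b)'\}$ for $b=1,\dots,q$, leaving the isolated top vertices of $D$ as singletons. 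In each case the matching is order-preserving and exhausts the smaller row (all $p$ top vertices of $P_2$, all $q$ bottom vertices of $P_1$), so there are no crossings and $P_1,P_2$ are genuine planar rook diagrams. One can halve the work by observing that $P_1$ for $D$ is obtained from $P_2$ for $D^*$ via the involution $^*$, since taking skeletons commutes with reflection and $^*$ preserves planar rook diagrams.

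The first thing to verify is that the composite has the same underlying set partition as $D$, i.e.\ $P_1\star\widetilde{D}\star P_2=D$. I would argue blockwise: a non-singleton block of $D$ has its bottom vertices among the $i_a$ and its top vertices among the $j_b$, and under the correspondences $i_a\leftrightarrow a$ (bottom of $\widetilde{D}$) and $j_b\leftrightarrow b$ (top of $\widetilde{D}$) it becomes exactly the corresponding block of $\widetilde{D}$. The edges of $P_2$ and $P_1$ then transport each such middle block back onto the original vertices of $D$, while each isolated vertex of $D$, being a singleton in $P_2$ or in $P_1$, remains isolated in the stacked diagram. Hence the block structures coincide.

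The step I expect to be the main obstacle is showing that the scalar $t^{\alpha}$ acquired in the two compositions is $t^0=1$, that is, that no connected component of the stacked diagram lies entirely within one of the two glued middle rows. This is precisely where the defining feature of the skeleton is used: \emph{every} vertex of $\widetilde{D}$ is non-isolated. At the lower interface each middle vertex is simultaneously a matched top vertex of $P_2$ (hence joined downward to a bottom vertex of $D$) and a non-isolated bottom vertex of $\widetilde{D}$ (hence joined upward inside $\widetilde{D}$); at the upper interface each middle vertex is a non-isolated top vertex of $\widetilde{D}$ together with a matched bottom vertex of $P_1$. In either case the vertex is connected both upward and downward, so no component can float in a middle row, giving $\alpha(\widetilde{D},P_2)=0$ and $\alpha(P_1,\widetilde{D}\circ P_2)=0$. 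Combining this with the blockwise identification yields $P_1\circ\widetilde{D}\circ P_2=D$ exactly, as required.
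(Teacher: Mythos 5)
Your decomposition is exactly the one the paper illustrates by example: $P_2$ and $P_1$ are the order-preserving planar rook connectors that strip off and reattach the singletons of $D$, with the skeleton $\widetilde{D}$ in between, and your blockwise identification of partitions together with the check that no component lies entirely in a glued middle row (so the scalar is $t^0$) supplies precisely the formal verification that the paper omits in favor of a picture. One tiny imprecision: a non-isolated bottom vertex of $\widetilde{D}$ need not be joined \emph{upward} inside $\widetilde{D}$ (it could be joined to another bottom vertex of $\widetilde{D}$), but that clause is redundant in your argument, since the downward edge through $P_2$ (respectively the upward edge through $P_1$) already guarantees every middle-row vertex is connected to an outer row, which is all that is needed for $\alpha=0$.
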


Instead of writing out a formal proof, we give an example to illustrate how to achieve this. Consider the following partition diagram.
\[
  D\ =\
  \begin{tikzpicture}[anchorbase]
    \pd{0.6,0.9};\pd{1.2,0.9};\pd{1.8,0.9};\pd{2.4,0.9};\pd{3,0.9};  
    \pd{0,0};\pd{0.6,0};\pd{1.2,0};\pd{1.8,0};\pd{2.4,0};\pd{3,0};\pd{3.6,0};
    \draw (0,0) \braidto (0.6,0.9);
    \draw (1.2,0) \braidto (0.6,0.9);
    \draw (0.6,0) to[out=up,in=up,looseness=1.35] (1.8,0);
    \draw (2.4,0) \braidto (1.8,0.9);
    \draw (2.4,0) \braidto (3,0.9);
    \draw (3.6,0) ..controls (3,0.7) and (2,0.2).. (1.2,0.9);
    \draw (3,0) to (3,0.15);
    \draw (2.4,0.85) to (2.4,1);
  \end{tikzpicture}
  \ =\ 
  \begin{tikzpicture}[anchorbase]
    \pd{0.6,1.8};\pd{1.2,1.8};\pd{1.8,1.8};\pd{2.4,1.8};\pd{3,1.8};
    \pd{0.6,0.9};\pd{1.2,0.9};\pd{1.8,0.9};\pd{2.4,0.9};\pd{3,0.9};  
    \pd{0,0};\pd{0.6,0};\pd{1.2,0};\pd{1.8,0};\pd{2.4,0};\pd{3,0};\pd{3.6,0};
    \pd{0,-0.9};\pd{0.6,-0.9};\pd{1.2,-0.9};\pd{1.8,-0.9};\pd{2.4,-0.9};\pd{3,-0.9};\pd{3.6,-0.9};
    \draw (0,-0.9) to (0,0) \braidto (0.6,0.9) to (0.6,1.8);
    \draw (1.2,-0.9) to (1.2,0) \braidto (0.6,0.9) to (0.6,1.8);
    \draw (0.6,-0.9) to (0.6,0) to[out=up,in=up,looseness=1.35] (1.8,0) to (1.8,-0.9);
    \draw (2.4,-0.9) to (2.4,0) \braidto (1.8,0.9) to (1.8,1.8);
    \draw (2.4,0) \braidto (3,0.9) to (3,1.8);
    \draw (3.6,-0.9) to (3.6,0) ..controls (3,0.7) and (2,0.2).. (1.2,0.9) to (1.2,1.8);
    \draw (3,-0.9) to (3,0) to (3,0.15);
    \draw (2.4,0.75) to (2.4,0.9) to (2.4,1.8);
  \end{tikzpicture}
  \ =\ 
  \begin{tikzpicture}[anchorbase]
    \pd{0.6,1.8};\pd{1.2,1.8};\pd{1.8,1.8};\pd{2.2,1.8};\pd{2.6,1.8};
    \pd{0.6,0.9};\pd{1.2,0.9};\pd{1.8,0.9};\pd{2.6,0.9};
    \pd{0,0};\pd{0.6,0};\pd{1.2,0};\pd{1.8,0};\pd{2.2,0};\pd{3,0};
    \pd{0,-0.9};\pd{0.6,-0.9};\pd{1.2,-0.9};\pd{1.8,-0.9};\pd{2.2,-0.9};\pd{2.6,-0.9};\pd{3,-0.9};
    \draw (0,-0.9) to (0,0) \braidto (0.6,0.9) to (0.6,1.8);
    \draw (1.2,-0.9) to (1.2,0) \braidto (0.6,0.9) to (0.6,1.8);
    \draw (0.6,-0.9) to (0.6,0) to[out=up,in=up,looseness=1.35] (1.8,0) to (1.8,-0.9);
    \draw (2.2,-0.9) to (2.2,0) \braidto (1.8,0.9) to (1.8,1.8);
    \draw (2.2,0) \braidto (2.6,0.9) to (2.6,1.8);
    \draw (3,-0.9) to (3,0) ..controls (2.4,0.7) and (1.7,0.2).. (1.2,0.9) to (1.2,1.8);
    \draw (2.6,-0.9) to (2.6,-0.75);
    \draw (2.2,1.65) to (2.2,1.8);
  \end{tikzpicture}
  \ =\ 
  P_1\circ\widetilde{D}\circ P_2
\]
Note that this decomposition only uses the axioms of strict $\kk$-linear monoidal categories. Moreover, we have the following correspondences between diagrams and their skeletons.
\begin{center}
\begin{tabular}{c|c}
  \hline
  \hline
   \textbf{Diagrams} & \textbf{Skeletons} \\
  \hline
  \hline
   rook diagram & permutation diagram \\
  \hline
   rook-Brauer diagram & Brauer diagram \\
  \hline
   Motzkin diagram & Temperley-Lieb diagram \\
  \hline
  \hline
\end{tabular}
\end{center}

Then, as a corollary of \cref{partitiondecompose}, we have
\begin{cor}\label{partitiondecomposecor}
  \begin{enumerate}
    \renewcommand{\theenumi}{(\alph{enumi})}
    \item\label{decompose1a} Every rook diagram $D$ has a decomposition $D=P_1\circ S\circ P_2$ where $S$ is a permutation diagram and $P_1,P_2$ are planar rook diagrams.
    \item\label{decompose1b} Every rook-Brauer diagram $D$ has a decomposition $D=P_1\circ B\circ P_2$ where $B$ is a Brauer diagram and $P_1,P_2$ are planar rook diagrams.
    \item\label{decompose1c} Every Motzkin diagram $D$ has a decomposition $D=P_1\circ T\circ P_2$ where $T$ is a Temperley-Lieb diagram and $P_1,P_2$ are planar rook diagrams.
  \end{enumerate}
\end{cor}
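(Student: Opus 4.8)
The plan is to derive all three statements directly from \cref{partitiondecompose} together with the correspondence table above. Given a rook (respectively rook-Brauer, Motzkin) diagram $D$, \cref{partitiondecompose} already supplies a decomposition $D=P_1\circ\widetilde{D}\circ P_2$ in which $P_1,P_2$ are planar rook diagrams and $\widetilde{D}$ is the skeleton of $D$. Thus the only thing left to verify is that the skeleton of a rook diagram is a permutation diagram, the skeleton of a rook-Brauer diagram is a Brauer diagram, and the skeleton of a Motzkin diagram is a Temperley-Lieb diagram; this is exactly the content of the table, so the whole corollary reduces to justifying those three entries.

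First I would record what the skeleton operation does at the level of blocks. By \cref{definskeleton}, passing from $D$ to $\widetilde{D}$ simply deletes every singleton and leaves all other blocks unchanged. In particular the multiset of block sizes of $\widetilde{D}$ is obtained from that of $D$ by discarding the $1$'s, so every block of $\widetilde{D}$ has the same size it had in $D$, and no block of size exceeding $1$ is ever created or destroyed.

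Next I would run through the three cases. For a rook diagram, every block has size at most $2$ and any two vertices in a common block lie in distinct rows; after deleting singletons, every surviving block has size exactly $2$ and joins one bottom vertex to one top vertex. Hence $\widetilde{D}$ matches its bottom vertices bijectively with its top vertices---in particular it has equally many vertices in each row---so $\widetilde{D}$ is a permutation diagram, giving \ref{decompose1a}. For a rook-Brauer diagram, deleting singletons again leaves only size-$2$ blocks, and a diagram all of whose blocks have size $2$ is by definition a Brauer diagram, giving \ref{decompose1b}. For a Motzkin diagram, which is a planar rook-Brauer diagram, the same argument shows $\widetilde{D}$ has all blocks of size $2$; since removing vertices and their incident edges cannot introduce crossings, $\widetilde{D}$ remains planar, and a planar Brauer diagram is a Temperley-Lieb diagram, giving \ref{decompose1c}.

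The arguments are elementary, so there is no real obstacle; the only points that need a moment's care are the bookkeeping that the surviving blocks of a rook diagram genuinely pair each remaining bottom vertex with a remaining top vertex, so that $\widetilde{D}$ is an honest bijection rather than merely a partial matching, and the observation that planarity is inherited by $\widetilde{D}$ in the Motzkin case.
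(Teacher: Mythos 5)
Your proposal is correct and follows exactly the paper's own route: the paper derives this corollary from \cref{partitiondecompose} together with the table identifying the skeleton of a rook, rook-Brauer, and Motzkin diagram as a permutation, Brauer, and Temperley-Lieb diagram respectively. You additionally spell out why those table entries hold (singleton removal leaves only size-$2$ blocks joining distinct rows, and planarity is inherited), which the paper leaves implicit, but the argument is the same.
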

Apart from the above decompositions given in \cref{partitiondecomposecor}, rook diagrams and rook-Brauer diagrams have other decompositions given in the following proposition.
\begin{prop}\label{decompose2}
  \begin{enumerate}
    \renewcommand{\theenumi}{(\alph{enumi})}
    \item\label{decompose2a} Every rook diagram $D$ has a decomposition $D=S\circ P$ where $S$ is a permutation diagram, and $P$ is a planar rook diagram.
    \item\label{decompose2b} Every rook-Brauer diagram $D$ has a decomposition $D=B\circ P$ where $B$ is a Brauer diagram, and $P$ is a planar rook diagram.
    \item\label{decompose2c} Every rook-Brauer diagram $D$ has a decomposition $D=S\circ M$ where $S$ is a permutation diagram and $M$ is a Motzkin diagram. 
  \end{enumerate}
\end{prop}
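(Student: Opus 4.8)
The plan is to prove all three parts by writing down the two factors explicitly, reading off from $D$ which bottom and top vertices are isolated, which lie in through-strands, and which lie in cups or caps. In every case the construction is arranged so that, after stacking, no connected component lies entirely in the middle row; then the number $\alpha$ of such components is zero, no power of $t$ is produced, and one only has to check that the surviving blocks are exactly those of $D$. I would also use the involutions ${}^{*}$ and ${}^{\sharp}$ from \cref{partitioncategorysubsection} to halve the casework and, where convenient, to trade a construction acting on the bottom for one acting on the top.

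For part~\ref{decompose2a}, let $i_1<\dots<i_m$ be the non-isolated (``active'') bottom vertices of the rook diagram $D\colon k\to l$, with $i_a$ matched by $D$ to the top vertex $j_a$. Take $P\colon k\to l$ to be the planar rook diagram that deletes the bottom singletons and sends $i_a$ to the $a$-th top node in order; being order-preserving, it is planar. Take $S\colon l\to l$ to be any permutation with $S(a)=j_a$ for $1\le a\le m$, chosen on the remaining nodes so as to cover the top singletons of $D$. In $S\circ P$ each active bottom vertex is routed to $j_a$, each spare middle node is pushed up and becomes a top singleton, and the bottom singletons survive; since $S$ is a permutation no middle component is closed, so $S\circ P=D$ on the nose. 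Part~\ref{decompose2b} is the same in spirit, but $D$ now also carries cups, caps and top singletons. I take $P$ to delete the bottom singletons, carry the active bottom vertices to the left, and additionally create $s_t$ extra isolated middle nodes, where $s_t$ is the number of top singletons of $D$; a parity count shows that $B$ then has an even number of vertices in total, so it can be realized as a perfect matching. I let $B$ be the Brauer diagram that realizes the skeleton $\widetilde{D}$ on the active nodes and matches each extra node to a distinct top singleton of $D$. Stacking reproduces the cups, caps, through-strands and top singletons of $D$, and because every active-image middle node is tied to the bottom while every extra middle node is tied to the top, no middle component is closed.

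Part~\ref{decompose2c} is the genuinely new one, since here $M$ must be \emph{planar}. The idea is to let $M$ carry the whole bottom of $D$ -- its cups, the feet of its through-strands, and its bottom singletons -- together with a \emph{standard}, non-crossing arrangement of caps, through-strand heads and top singletons whose multiplicities agree with those of $D$, and then to let $S$ act on the top nodes so as to turn this standard arrangement into the actual top of $D$. Because $S$ touches only the top, it can freely uncross or recross the caps, reroute the heads of the through-strands, and reposition the top singletons, so every top feature of $D$ -- including crossing caps -- is recovered by a suitable choice of $S$, exactly as the permutation did in part~\ref{decompose2a}.

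The step I expect to be the main obstacle is the planarity of $M$ \emph{along the bottom}. Since $S$ sits on top, the bottom row of $S\circ M$ coincides with the bottom row of $M$; hence the cup pattern and the feet of the through-strands of $D$ have to be realizable inside a planar diagram, that is, non-crossing. The heart of the argument is therefore to put the bottom data of $D$ into a planar normal form before building $M$, and this is where I would concentrate the work: the hope is to process the cups from the outside in, nesting them and threading the through-strand feet through the gaps, and to show that any apparent crossing among the lower arcs can be absorbed into the top factor $S$. Reconciling an arbitrary bottom configuration with the planarity demanded of a Motzkin diagram is the crux, and it is the one place where the casework of parts~\ref{decompose2a} and~\ref{decompose2b} does not transfer verbatim.
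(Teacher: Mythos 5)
Your treatment of parts~\cref{decompose2a} and~\cref{decompose2b} is correct and is essentially the paper's own argument (the paper gives illustrative examples in place of a formal proof): the planar factor absorbs the bottom singletons and carries the active bottom vertices upward in an order-preserving way, the extra middle nodes are matched to the top singletons of $D$, and since every middle vertex is tied either to the bottom or to the top, no component closes up in the middle row and no power of $t$ appears.

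Part~\cref{decompose2c} is where the proposal has a genuine gap, and the obstacle you flag is not merely ``the crux'': it is insurmountable, because the statement in the form you are trying to prove is false. If the permutation $S$ sits on top, then two bottom vertices of $S\circ M$ lie in a common block if and only if they lie in a common block of $M$; a connecting path cannot detour through $S$, since every strand of a permutation diagram that enters from below exits through the top and never returns. Hence the cups of $S\circ M$ are exactly the cups of $M$, and, $M$ being planar, they can never interleave. So the rook-Brauer diagram $D\colon 4\to 0$ with blocks $\{1,3\}$, $\{2,4\}$ admits no decomposition $D=S\circ M$: permutation diagrams are square, so $S$ would have to be $1_0$, forcing $D=M$ to be planar, which it is not. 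Your hope of absorbing a crossing of two bottom cups into the top factor $S$ cannot be realized by any bookkeeping.

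For comparison, the paper's example for part~\cref{decompose2c} actually computes the \emph{opposite} order, $D=M\circ S$, with the Motzkin factor on top and the permutation on the bottom (in conflict with the displayed formula $D=S\circ M$). In that form the bottom permutation untwists crossing cups and re-routes the feet of through-strands---precisely the move that is unavailable to you---but it fails for the mirror reason on crossing caps: for $D\colon 0\to 4$ with blocks $\{1',3'\}$, $\{2',4'\}$, the caps of $M\circ S$ are exactly the caps of $M$, so no such decomposition exists either. A diagram with both interleaving cups and interleaving caps rules out both orders at once, so no two-factor permutation/Motzkin statement of this kind can hold. What is true, and what your constructions from parts~\cref{decompose2a} and~\cref{decompose2b} prove with no new ideas, is the two-sided statement $D=S_1\circ M\circ S_2$ with $S_1,S_2$ permutation diagrams and $M$ a Motzkin diagram; this version also suffices for the only use the paper makes of part~\cref{decompose2c}, namely the alternative proof of \cref{rbin} mentioned in the closing remark.
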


Instead of writing out formal proofs, we will give examples to illustrate how to obtain such decompositions. Consider the following rook diagram.
\[
  D\ =\ 
  \begin{tikzpicture}[anchorbase]
    \pd{0.25,0.7};\pd{0.75,0.7};\pd{1.25,0.7};\pd{1.75,0.7};
    \pd{0,0};\pd{0.5,0};\pd{1,0};\pd{1.5,0};\pd{2,0};
    \draw (0,0) ..controls (0.5,0.4) and (1,0.15).. (1.75,0.7);
    \draw (0.5,0) to (0.5,0.15);
    \draw (1,0) to (0.75,0.7);
    \draw (1.5,0) ..controls (1.1,0.5) and (0.5,0.4).. (0.25,0.7);
    \draw (2,0) to (2,0.15);
    \draw (1.25,0.55) to (1.25,0.7);
  \end{tikzpicture}
  \ =\ 
  \begin{tikzpicture}[anchorbase]
    \pd{0.25,0.7};\pd{0.75,0.7};\pd{1.25,0.7};\pd{1.75,0.7};
    \pd{0,0};\pd{0.5,0};\pd{1,0};\pd{1.5,0};\pd{2,0};
    \pd{0,-0.7};\pd{0.5,-0.7};\pd{1,-0.7};\pd{1.5,-0.7};\pd{2,-0.7};
    \draw (0,-0.7) to (0,0) ..controls (0.5,0.4) and (1,0.15).. (1.75,0.7);
    \draw (0.5,-0.7) to (0.5,0) to (0.5,0.15);
    \draw (1,-0.7) to (1,0) to (0.75,0.7);
    \draw (1.5,-0.7) to (1.5,0) ..controls (1.1,0.5) and (0.5,0.4).. (0.25,0.7);
    \draw (2,-0.7) to (2,0) to (2,0.15);
    \draw (1.25,0.55) to (1.25,0.7);
  \end{tikzpicture}
  \ =\ 
  \begin{tikzpicture}[anchorbase]
    \pd{0.25,0.7};\pd{0.75,0.7};\pd{1.25,0.7};\pd{1.75,0.7};
    \pd{0,0};\pd{1,0};\pd{1.5,0};
    \pd{0,-0.7};\pd{0.5,-0.7};\pd{1,-0.7};\pd{1.5,-0.7};\pd{2,-0.7};
    \draw (0,-0.7) to (0,0) ..controls (0.5,0.4) and (1,0.15).. (1.75,0.7);
    \draw (0.5,-0.7) to (0.5,-0.55);
    \draw (1,-0.7) to (1,0) to (0.75,0.7);
    \draw (1.5,-0.7) to (1.5,0) ..controls (1.1,0.5) and (0.5,0.4).. (0.25,0.7);
    \draw (2,-0.7) to (2,-0.55);
    \draw (1.25,0.55) to (1.25,0.7);
  \end{tikzpicture}
  \ =\ 
  \begin{tikzpicture}[anchorbase]
    \pd{0.25,0.7};\pd{0.75,0.7};\pd{1.25,0.7};\pd{1.75,0.7};
    \pd{0,0};\pd{1,0};\pd{1.5,0};
    \filldraw[green] (1.25,0) circle (1.5pt);
    \pd{0,-0.7};\pd{0.5,-0.7};\pd{1,-0.7};\pd{1.5,-0.7};\pd{2,-0.7};
    \draw (0,-0.7) to (0,0) ..controls (0.5,0.4) and (1,0.15).. (1.75,0.7);
    \draw (0.5,-0.7) to (0.5,-0.55);
    \draw (1,-0.7) to (1,0) to (0.75,0.7);
    \draw (1.5,-0.7) to (1.5,0) ..controls (1.1,0.5) and (0.5,0.4).. (0.25,0.7);
    \draw (2,-0.7) to (2,-0.55);
    \draw[green,-] (1.25,-0.15) to (1.25,0) to (1.25,0.7);
  \end{tikzpicture}
  \ =\ 
  S\circ P
\]
Consider the following rook-Brauer diagram.
\[
  D\ =\ 
  \begin{tikzpicture}[anchorbase]
    \pd{0.75,0.7};\pd{1.25,0.7};\pd{1.75,0.7};\pd{2.25,0.7};\pd{2.75,0.7};
    \pd{0,0};\pd{0.5,0};\pd{1,0};\pd{1.5,0};\pd{2,0};\pd{2.5,0};\pd{3,0};\pd{3.5,0};
    \draw (0,0) to (0,0.15);
    \draw (0.5,0) ..controls (1.3,0.5) and (2.3,0.2).. (2.75,0.7);
    \draw (1,0) to[out=up,in=up,looseness=0.7] (2,0);
    \draw (1.5,0) \braidto (1.75,0.7);
    \draw (2.5,0) to (2.5,0.15);
    \draw (3,0) to[out=up,in=up,looseness=1.5] (3.5,0);
    \draw (0.75,0.7) to[out=down,in=down,looseness=1] (1.25,0.7);
    \draw (2.25,0.55) to (2.25,0.7);
  \end{tikzpicture}
  \ =\ 
  \begin{tikzpicture}[anchorbase]
    \pd{0.75,0.7};\pd{1.25,0.7};\pd{1.75,0.7};\pd{2.25,0.7};\pd{2.75,0.7};
    \pd{0,0};\pd{0.5,0};\pd{1,0};\pd{1.5,0};\pd{2,0};\pd{2.5,0};\pd{3,0};\pd{3.5,0};
    \pd{0,-0.7};\pd{0.5,-0.7};\pd{1,-0.7};\pd{1.5,-0.7};\pd{2,-0.7};\pd{2.5,-0.7};\pd{3,-0.7};\pd{3.5,-0.7};
    \draw (0,-0.7) to (0,0) to (0,0.15);
    \draw (0.5,-0.7) to (0.5,0) ..controls (1.3,0.5) and (2.3,0.2).. (2.75,0.7);
    \draw (1,-0.7) to (1,0) to[out=up,in=up,looseness=0.7] (2,0) to (2,-0.7);
    \draw (1.5,-0.7) to (1.5,0) \braidto (1.75,0.7);
    \draw (2.5,-0.7) to (2.5,0) to (2.5,0.15);
    \draw (3,-0.7) to (3,0) to[out=up,in=up,looseness=1.5] (3.5,0) to (3.5,-0.7);
    \draw (0.75,0.7) to[out=down,in=down,looseness=1] (1.25,0.7);
    \draw (2.25,0.55) to (2.25,0.7);
  \end{tikzpicture}
  \ =\ 
  \begin{tikzpicture}[anchorbase]
    \pd{0.75,0.7};\pd{1.25,0.7};\pd{1.75,0.7};\pd{2.25,0.7};\pd{2.75,0.7};
    \pd{0.5,0};\pd{1,0};\pd{1.5,0};\pd{2,0};\pd{2.5,0};\pd{3,0};
    \pd{0.25,-0.7};\pd{0.5,-0.7};\pd{1,-0.7};\pd{1.5,-0.7};\pd{2,-0.7};\pd{2.25,-0.7};\pd{2.5,-0.7};\pd{3,-0.7};
    \filldraw[green] (2.25,0) circle (1.5pt);
    \draw (0.25,-0.7) to (0.25,-0.55);
    \draw (0.5,-0.7) to (0.5,0) ..controls (1.3,0.5) and (2.3,0.2).. (2.75,0.7);
    \draw (1,-0.7) to (1,0) to[out=up,in=up,looseness=0.7] (2,0) to (2,-0.7);
    \draw (1.5,-0.7) to (1.5,0) \braidto (1.75,0.7);
    \draw (2.25,-0.7) to (2.25,-0.55);
    \draw (2.5,-0.7) to (2.5,0) to[out=up,in=up,looseness=1.5] (3,0) to (3,-0.7);
    \draw (0.75,0.7) to[out=down,in=down,looseness=1] (1.25,0.7);
    \draw[green,-] (2.25,-0.15) to (2.25,0) to (2.25,0.7);
  \end{tikzpicture}
  \ =\ 
  B\circ P
\]
Consider the following rook-Brauer diagram.
\[
  D\ =\ 
  \begin{tikzpicture}[anchorbase]
    \pd{0.75,0.7};\pd{1.25,0.7};\pd{1.75,0.7};\pd{2.25,0.7};\pd{2.75,0.7};
    \pd{0,0};\pd{0.5,0};\pd{1,0};\pd{1.5,0};\pd{2,0};\pd{2.5,0};\pd{3,0};\pd{3.5,0};
    \draw (0,0) to (0,0.15);
    \draw (0.5,0) ..controls (1.3,0.5) and (2.3,0.2).. (2.75,0.7);
    \draw (1,0) to[out=up,in=up,looseness=0.7] (2,0);
    \draw (1.5,0) \braidto (1.75,0.7);
    \draw (2.5,0) to (2.5,0.15);
    \draw (3,0) to[out=up,in=up,looseness=1.5] (3.5,0);
    \draw (0.75,0.7) to[out=down,in=down,looseness=1] (1.25,0.7);
    \draw (2.25,0.55) to (2.25,0.7);
  \end{tikzpicture}
  \ =\ 
  \begin{tikzpicture}[anchorbase]
    \pd{0.75,0.7};\pd{1.25,0.7};\pd{1.75,0.7};\pd{2.25,0.7};\pd{2.75,0.7};
    \pd{0,0};\pd{0.5,0};\pd{1,0};\pd{1.5,0};\pd{2,0};\pd{2.5,0};\pd{3,0};\pd{3.5,0};
    \pd{0,-0.7};\pd{0.5,-0.7};\pd{1,-0.7};\pd{1.5,-0.7};\pd{2,-0.7};\pd{2.5,-0.7};\pd{3,-0.7};\pd{3.5,-0.7};
    \draw (0,-0.7) to (0,0) to (0,0.15);
    \draw (0.5,-0.7) to (0.5,0) ..controls (1.3,0.5) and (2.3,0.2).. (2.75,0.7);
    \draw (1,-0.7) to (1,0) to[out=up,in=up,looseness=0.7] (2,0) to (2,-0.7);
    \draw (1.5,-0.7) to (1.5,0) \braidto (1.75,0.7);
    \draw (2.5,-0.7) to (2.5,0) to (2.5,0.15);
    \draw (3,-0.7) to (3,0) to[out=up,in=up,looseness=1.5] (3.5,0) to (3.5,-0.7);
    \draw (0.75,0.7) to[out=down,in=down,looseness=1] (1.25,0.7);
    \draw (2.25,0.55) to (2.25,0.7);
  \end{tikzpicture}
  \ =\ 
  \begin{tikzpicture}[anchorbase]
    \pd{0.75,0.7};\pd{1.25,0.7};\pd{1.75,0.7};\pd{2.25,0.7};\pd{2.75,0.7};
    \pd{0,0};\pd{0.5,0};
    \filldraw[green] (1,0) circle (1.5pt);
    \filldraw[green] (1.5,0) circle (1.5pt);
    \pd{2,0};\pd{2.5,0};
    \filldraw[green] (3,0) circle (1.5pt);
    \filldraw[green] (3.5,0) circle (1.5pt);
    \pd{0,-0.7};\pd{0.5,-0.7};\pd{1,-0.7};\pd{1.5,-0.7};\pd{2,-0.7};\pd{2.5,-0.7};\pd{3,-0.7};\pd{3.5,-0.7};
    \draw (0,-0.7) to (0,0) to (0,0.15);
    \draw (0.5,-0.7) \braidto (2,0) \braidto (2.75,0.7);
    \draw[green,-] (1,-0.7) to (1,0) to[out=up,in=up,looseness=0.7] (1.5,0) \braidto (2,-0.7);
    \draw (1.5,-0.7) ..controls (0.5,-0.5).. (0.5,0) \braidto (1.75,0.7);
    \draw (2.5,-0.7) to (2.5,0) to (2.5,0.15);
    \draw[green,-] (3,-0.7) to (3,0) to[out=up,in=up,looseness=1.5] (3.5,0) to (3.5,-0.7);
    \draw (0.75,0.7) to[out=down,in=down,looseness=1] (1.25,0.7);
    \draw (2.25,0.55) to (2.25,0.7);
  \end{tikzpicture}
  \ =\ 
  M\circ S
\]
These decompositions are not unique because we can drag the green vertices or the green caps to any other places in the middle rows.

\begin{rem}
  \begin{enumerate}
    \item In \cref{decompose2}, the decompositions \cref{decompose2a} and \cref{decompose2b} only use the axioms of the strict $\kk$-linear monoidal category and the fact that $\begin{tikzpicture}[anchorbase]
      \pd{0,0};
      \draw (0,0) to (0,-0.25);
    \end{tikzpicture}$ can pass through the braiding $\begin{tikzpicture}[anchorbase]
      \pd{0,0.5};\pd{0.5,0.5};
      \pd{0,0};\pd{0.5,0};
      \draw (0,0) \braidto (0.5,0.5);
      \draw (0.5,0) \braidto (0,0.5);
    \end{tikzpicture}$. The decomposition \cref{decompose2c} also uses the fact that the cap map $\begin{tikzpicture}[anchorbase]
      \pd{0,0};\pd{0.5,0};
      \draw (0,0) to[out=up,in=up,looseness=1.5] (0.5,0);
    \end{tikzpicture}$ can pass through the braiding. That is why these decompositions also hold in the diagram categories we define later, which contain fewer relations.
    \item Another proof for \cref{decompose2}\cref{decompose2a} will be given in the end of \cref{rookoriginsubsection}.
  \end{enumerate}
\end{rem}

\begin{cor}\label{decompose2cor}
  \begin{enumerate}
    \renewcommand{\theenumi}{(\alph{enumi})}
    \item\label{decompose2cora} Every rook diagram $D$ has a decomposition $D=P\circ S$ where $S$ is a permutation diagram, and $P$ is a planar rook diagram.
    \item\label{decompose2corb} Every rook-Brauer diagram $D$ has a decomposition $D=P\circ B$ where $B$ is a Brauer diagram, and $P$ is a planar rook diagram.
    \item\label{decompose2corc} Every rook-Brauer diagram $D$ has a decomposition $D=M\circ S$ where $M$ is a permutation diagram, and $S$ is a Motzkin diagram.
  \end{enumerate}
\end{cor}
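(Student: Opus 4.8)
The plan is to obtain \cref{decompose2cor} from \cref{decompose2} by applying the involution $^*\colon\Par(t)\to\Par(t)^{\op}$, which reflects each diagram in a horizontal line. Recall the three properties of this involution recorded above: it squares to the identity, $(D^*)^*=D$; it reverses vertical composition, $(D_1\circ D_2)^*=D_2^*\circ D_1^*$; and it leaves invariant each of the distinguished classes of diagrams defined in \cref{introduction}. In particular, $^*$ restricts to a bijection on rook diagrams, on rook-Brauer diagrams, on planar rook diagrams, on Brauer diagrams, and on Motzkin diagrams; moreover, the reflection of a permutation diagram is again a permutation diagram, namely the diagram of the inverse permutation. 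Each decomposition in \cref{decompose2cor} is the ``vertical mirror'' of the corresponding decomposition in \cref{decompose2}: the factors retain their types but their order of vertical composition is reversed, which is exactly the effect of $^*$.

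Concretely, for part \cref{decompose2cora} I would start from a rook diagram $D$. Since the class of rook diagrams is $^*$-invariant, $D^*$ is again a rook diagram, so \cref{decompose2}\cref{decompose2a} supplies a factorization $D^*=S\circ P$ with $S$ a permutation diagram and $P$ a planar rook diagram. Applying $^*$ and using $(D^*)^*=D$ together with $(S\circ P)^*=P^*\circ S^*$ yields $D=P^*\circ S^*$, where $P^*$ is a planar rook diagram and $S^*$ is a permutation diagram; this is a decomposition of the form $D=P\circ S$ required in part \cref{decompose2cora}. Parts \cref{decompose2corb} and \cref{decompose2corc} are identical in form. Applying $^*$ to the factorization $D^*=B\circ P$ provided by \cref{decompose2}\cref{decompose2b} gives $D=P^*\circ B^*$ with $P^*$ planar rook and $B^*$ Brauer, which is the decomposition $D=P\circ B$ of part \cref{decompose2corb}; and applying $^*$ to $D^*=S\circ M$ provided by \cref{decompose2}\cref{decompose2c} gives $D=M^*\circ S^*$ with $M^*$ a Motzkin diagram and $S^*$ a permutation diagram, which is the decomposition $D=M\circ S$ of part \cref{decompose2corc}.

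Since \cref{decompose2} is already available, no genuine difficulty remains; the only points requiring a line of justification are that $^*$ preserves each relevant class of diagrams (already noted for the classes defined in \cref{introduction}, and immediate for permutation diagrams via inversion) and that one must apply \cref{decompose2} to $D^*$ rather than to $D$ itself, so that reflecting back produces the desired factorization of $D$. I expect the most error-prone step to be simply tracking the reversed composition order, since $^*$ interchanges the top and bottom factors; this is bookkeeping rather than a true obstacle, as the entire content of these decompositions already lives in \cref{decompose2}. One could alternatively reprove \cref{decompose2cor} directly by dragging the isolated vertices and caps in the opposite direction, mirroring the worked examples, but the involution argument is cleaner and makes the correspondence with \cref{decompose2} transparent.
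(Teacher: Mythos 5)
Your proof is correct and takes exactly the same route as the paper: the paper proves part \cref{decompose2cora} by precisely this involution computation, $D=(D^*)^*=(S\circ P)^*=P^*\circ S^*$, and dismisses parts \cref{decompose2corb} and \cref{decompose2corc} as ``similar'', which you have simply written out in full. One small remark on part \cref{decompose2corc}: your argument produces $D=M^*\circ S^*$ with the \emph{Motzkin} factor on top and the permutation factor on the bottom, which is the mirror of \cref{decompose2}\cref{decompose2c} and is surely what is intended; the corollary's printed wording (``$M$ is a permutation diagram, and $S$ is a Motzkin diagram'') appears to have its descriptors swapped, since as literally stated part \cref{decompose2corc} would repeat \cref{decompose2}\cref{decompose2c} rather than mirror it.
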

\begin{proof}
  We only prove \cref{decompose2cora} here. The proof of the other two is similar.
  Consider a rook diagram $D$. Then the diagram $D^*$ is still a rook diagram. By \cref{decompose2}\cref{decompose2a}, the rook diagram $D^*$ has a decomposition $D^*=S\circ P$, where $S$ is a permutation diagram, and $P$ is a planar rook diagram. Then by taking the involution again, we have 
  \begin{equation*}
    D=(D^*)^*=(S\circ P)^*=P^*\circ S^*, 
  \end{equation*}
  where $S^*$ is a permutation diagram, and $P^*$ is a planar rook diagram.
\end{proof}
\begin{theo}\label{partitioncategorypresentation}
  As a strict $\kk$-linear monoidal category, the \emph{partition category} $\Par(t)$ is generated by the object $1$ and the morphisms
  \[
    \mu\ =\
    \begin{tikzpicture}[anchorbase]
      \pd{0.35,0.7};
      \pd{0,0};\pd{0.7,0};
      \draw (0,0) \braidto (0.35,0.7);
      \draw (0.7,0) \braidto (0.35,0.7);
    \end{tikzpicture}
    \colon 2 \to 1,\ 
    \delta\ =\ 
    \begin{tikzpicture}[anchorbase]
      \pd{0,0.7};\pd{0.7,0.7};
      \pd{0.35,0};
      \draw (0.35,0) \braidto (0,0.7);
      \draw (0.35,0) \braidto (0.7,0.7);
    \end{tikzpicture}
    \colon 1 \to 2,\ 
    s\ =\
    \begin{tikzpicture}[anchorbase]
      \pd{0,0.7};\pd{0.7,0.7};
      \pd{0,0};\pd{0.7,0};
      \draw (0,0) \braidto (0.7,0.7);
      \draw (0.7,0) \braidto (0,0.7);
    \end{tikzpicture}
    \colon 2 \to 2,\ 
    \eta\ =\
    \begin{tikzpicture}[anchorbase]
      \pd{0,0};
      \draw (0,-0.25) to (0,0);
    \end{tikzpicture}
    \colon 0 \to 1,\ 
    \varepsilon\ =\
    \begin{tikzpicture}[anchorbase]
      \pd{0,0};
      \draw (0,0) to (0,0.25);
    \end{tikzpicture}
    \colon 1 \to 0,
  \]
  subject to the following relations and their transforms under $^*$ and $^{\sharp}$:
  \begin{gather}\label{P1}
    \begin{tikzpicture}[anchorbase]
      \pd{0.35,1.4};
      \pd{0,0.7};\pd{0.7,0.7};
      \pd{0,0};
      \draw (0,0) to (0,0.7) \braidto (0.35,1.4);
      \draw (0.7,0.45) to (0.7,0.7) \braidto (0.35,1.4);
    \end{tikzpicture}
    \ =\
    \begin{tikzpicture}[anchorbase]
      \pd{0,0.7};
      \pd{0,0};
      \draw (0,0) to (0,0.7);
    \end{tikzpicture}
    \ , \qquad
    \begin{tikzpicture}[anchorbase]
      \pd{0.7,1.4};\pd{1.4,1.4};
      \pd{0,0.7};\pd{0.7,0.7};\pd{1.4,0.7};
      \pd{0,0};\pd{0.7,0};
      \draw (0,0) to (0,0.7) \braidto (0.7,1.4);
      \draw (0.7,0) to (0.7,0.7) to (0.7,1.4);
      \draw (0.7,0) \braidto (1.4,0.7) to (1.4,1.4);
    \end{tikzpicture}
    \ =\
    \begin{tikzpicture}[anchorbase]
      \pd{0,1.4};\pd{0.7,1.4};
      \pd{0.35,0.7};
      \pd{0,0};\pd{0.7,0};
      \draw (0,0) \braidto (0.35,0.7);
      \draw (0.7,0) \braidto (0.35,0.7);
      \draw (0.35,0.7) \braidto (0,1.4);
      \draw (0.35,0.7) \braidto (0.7,1.4);
    \end{tikzpicture}
    \ ,
    \\ \label{P2}
    \begin{tikzpicture}[anchorbase]
      \pd{0,1.4};\pd{0.7,1.4};
      \pd{0,0.7};\pd{0.7,0.7};
      \pd{0,0};\pd{0.7,0};
      \draw (0,0) \braidto (0.7,0.7) \braidto (0,1.4);
      \draw (0.7,0) \braidto (0,0.7) \braidto (0.7,1.4);
    \end{tikzpicture}
    \ =\
    \begin{tikzpicture}[anchorbase]
      \pd{0,0.7};\pd{0.7,0.7};
      \pd{0,0};\pd{0.7,0};
      \draw (0,0) to (0,0.7);
      \draw (0.7,0) to (0.7,0.7);
    \end{tikzpicture}
    \ , \qquad
    \begin{tikzpicture}[anchorbase]
      \pd{0,2.1};\pd{0.7,2.1};\pd{1.4,2.1};
      \pd{0,1.4};\pd{0.7,1.4};\pd{1.4,1.4};
      \pd{0,0.7};\pd{0.7,0.7};\pd{1.4,0.7};
      \pd{0,0};\pd{0.7,0};\pd{1.4,0};
      \draw (0,0) to (0,0.7) \braidto (0.7,1.4) \braidto (1.4,2.1);
      \draw (0.7,0) \braidto (1.4,0.7) to (1.4,1.4) \braidto (0.7,2.1);
      \draw (1.4,0) \braidto (0.7,0.7) \braidto (0,1.4) to (0,2.1);
    \end{tikzpicture}
    \ =\
    \begin{tikzpicture}[anchorbase]
      \pd{0,2.1};\pd{0.7,2.1};\pd{1.4,2.1};
      \pd{0,1.4};\pd{0.7,1.4};\pd{1.4,1.4};
      \pd{0,0.7};\pd{0.7,0.7};\pd{1.4,0.7};
      \pd{0,0};\pd{0.7,0};\pd{1.4,0};
      \draw (0,0) \braidto (0.7,0.7) \braidto (1.4,1.4) to (1.4,2.1);
      \draw (0.7,0) \braidto (0,0.7) to (0,1.4) \braidto (0.7,2.1);
      \draw (1.4,0) to (1.4,0.7) \braidto (0.7,1.4) \braidto (0,2.1);
    \end{tikzpicture}
    \ ,
    \\ \label{P3}
    \begin{tikzpicture}[anchorbase]
     \pd{0,1.4};\pd{0.7,1.4};
     \pd{0,0.7};\pd{0.7,0.7};
     \pd{0,0};
     \draw (0,0) to (0,0.7) \braidto (0.7,1.4);
     \draw (0.7,0.45) to (0.7,0.7) \braidto (0,1.4);
    \end{tikzpicture}
    \ =\
    \begin{tikzpicture}[anchorbase]
     \pd{0,0.7};\pd{0.7,0.7};
     \pd{0.7,0};
     \draw (0,0.45) to (0,0.7);
     \draw (0.7,0) to (0.7,0.7);
    \end{tikzpicture}
    \ , \qquad
    \begin{tikzpicture}[anchorbase]
     \pd{0,2.1};\pd{0.7,2.1};
     \pd{0,1.4};\pd{0.7,1.4};\pd{1.4,1.4};
     \pd{0,0.7};\pd{0.7,0.7};\pd{1.4,0.7};
     \pd{0,0};\pd{0.7,0};\pd{1.4,0};
     \draw (0,0) to (0,0.7) \braidto (0.7,1.4) to (0.7,2.1);
     \draw (0.7,0) \braidto (1.4,0.7) to (1.4,1.4) \braidto (0.7,2.1);
     \draw (1.4,0) \braidto (0.7,0.7) \braidto (0,1.4) to (0,2.1); 
    \end{tikzpicture}
    \ =\
    \begin{tikzpicture}[anchorbase]
     \pd{0,1.4};\pd{0.7,1.4};
     \pd{0,0.7};\pd{0.7,0.7};
     \pd{-0.7,0};\pd{0,0};\pd{0.7,0};
     \draw (-0.7,0) \braidto (0,0.7);
     \draw (0,0) to (0,0.7) \braidto (0.7,1.4);
     \draw (0.7,0) to (0.7,0.7) \braidto (0,1.4);
    \end{tikzpicture}
    \ ,
    \\ \label{P4}
    \begin{tikzpicture}[anchorbase]
     \pd{0.35,1.4};
     \pd{0,0.7};\pd{0.7,0.7};
     \pd{0,0};\pd{0.7,0};
     \draw (0,0) \braidto (0.7,0.7) \braidto (0.35,1.4);
     \draw (0.7,0) \braidto (0,0.7) \braidto (0.35,1.4);
    \end{tikzpicture}
    \ =\
    \begin{tikzpicture}[anchorbase]
     \pd{0.35,0.7};
     \pd{0,0};\pd{0.7,0};
     \draw (0,0) \braidto (0.35,0.7);
     \draw (0.7,0) \braidto (0.35,0.7);
    \end{tikzpicture}
    \ , \qquad
    \begin{tikzpicture}[anchorbase]
     \pd{0.35,1.4};
     \pd{0,0.7};\pd{0.7,0.7};
     \pd{0.35,0};
     \draw (0.35,0) \braidto (0.7,0.7) \braidto (0.35,1.4);
     \draw (0.35,0) \braidto (0,0.7) \braidto (0.35,1.4);
    \end{tikzpicture}
    \ =\
    \begin{tikzpicture}[anchorbase]
     \pd{0,0.7};
     \pd{0,0};
     \draw (0,0) to (0,0.7);
    \end{tikzpicture}
    \ , \qquad
    \begin{tikzpicture}[anchorbase]
     \pd{0,0};
     \draw (0,-0.25) to (0,0) to (0,0.25);
    \end{tikzpicture}
    \ =\ t1_{0}\ .
  \end{gather}
\end{theo}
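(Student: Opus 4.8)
The plan is to show that the evident comparison functor from the abstractly presented category to $\Par(t)$ is an isomorphism. Write $\cC$ for the strict $\kk$-linear monoidal category defined by the generators $\mu,\delta,s,\eta,\varepsilon$ and the relations \cref{P1,P2,P3,P4} together with their images under $^*$ and $^\sharp$. First I would define a strict $\kk$-linear monoidal functor $F\colon\cC\to\Par(t)$ sending the object $1$ to $1$ and each generating morphism to the partition diagram bearing the same name. To see that $F$ is well defined, one checks that the defining relations hold in $\Par(t)$; since the involutions $^*$ and $^\sharp$ preserve partition diagrams, it suffices to verify the displayed relations \cref{P1,P2,P3,P4}, each of which is a short computation with the composition rule $D\circ D'=t^{\alpha(D,D')}D\star D'$ (note in particular that the final relation records $\varepsilon\circ\eta=t1_0$). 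As $\Par(t)$ and $\cC$ share the object monoid $\N$ generated by $1$, the functor $F$ is a bijection on objects, so it remains to prove that $F$ is full and faithful.

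For fullness I would show that every partition diagram lies in the image of $F$. By \cref{partitiondecompose}, an arbitrary partition diagram decomposes as $D=P_1\circ\widetilde D\circ P_2$ with $P_1,P_2$ planar rook diagrams and $\widetilde D$ its skeleton. Planar rook diagrams are built from identity strands together with $\eta$ and $\varepsilon$, since they are non-crossing partial matchings with isolated vertices created or annihilated by $\eta,\varepsilon$; and the skeleton, being singleton-free, can be assembled from $\mu,\delta,s$ (using $\eta,\varepsilon$ to terminate any block lying entirely in one row) by merging the bottom vertices of each block to a single strand, permuting with $s$, and splitting into the top row. Hence every partition diagram is in the image of $F$, and $F$ is full.

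Faithfulness is the crux. Here I would produce, using only the relations of $\cC$, a spanning set of each morphism space $\Hom_{\cC}(k,l)$ indexed by the partition diagrams of type $\binom{l}{k}$. Every morphism of $\cC$ is a $\kk$-linear combination of composites of generators, and I would use \cref{P1,P2,P3,P4} and their $^*,^\sharp$-transforms as rewriting rules to straighten each composite into a canonical ``planar--merge--permute--split--planar'' form matching the fullness construction, so that these canonical forms span $\Hom_{\cC}(k,l)$ and are no more numerous than the partition diagrams of type $\binom{l}{k}$. Since $F$ carries this spanning set onto the set of partition diagrams, which is by definition a $\kk$-basis of $\Hom_{\Par(t)}(k,l)$ and in particular linearly independent, the spanning set must itself be a basis and $F$ restricts to an isomorphism on each morphism space. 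The main obstacle is precisely this straightening step: one must verify that the relations are rich enough to reduce \emph{every} word in the generators to canonical form (a termination-and-confluence analysis), and that the special relation $\varepsilon\circ\eta=t1_0$ correctly evaluates each closed component so as to reproduce the scalar $t^{\alpha(D,D')}$, ensuring that no two distinct partition diagrams are identified and that the count is exact.
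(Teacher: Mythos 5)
Your proposal follows essentially the same route as the paper, and it is worth being precise about what that route is. The paper's own treatment of this theorem has three parts: (i) the observation that every partition diagram is a composite of tensor products of $\mu,\delta,s,\eta,\varepsilon$, demonstrated by a worked example (this is your fullness step, which you instead derive from \cref{partitiondecompose} together with an assembly of the skeleton from $\mu,\delta,s$ --- both arguments are fine, and yours is arguably more systematic); (ii) the remark that the relations \cref{P1,P2,P3,P4} hold in $\Par(t)$, which is your well-definedness check; and (iii) the assertion that the converse --- two composites of generators are equal modulo the relations if and only if they represent the same partition --- is ``very technical,'' which the paper does \emph{not} prove but delegates to \cite[\S1.4]{Koc04}, \cite[Th.~2.1]{Com16}, and \cite[Prop.~2.1]{Sav19}. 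Your faithfulness step is exactly this converse, and you correctly identify it as the crux; your sketch (a canonical ``planar--merge--permute--split--planar'' form, a termination-and-confluence analysis of the rewriting system, and the bookkeeping of the scalars $t^{\alpha(D,D')}$ via $\varepsilon\circ\eta=t1_0$) is precisely the content of those cited proofs. So your attempt is at parity with the paper's own text: neither actually carries out the confluence analysis, and that analysis is the entire mathematical substance of the theorem. To convert your proposal into a complete proof you would have to either execute the straightening argument in full or, as the paper does, invoke the existing proofs in the literature; as written, your final paragraph is an accurate description of the remaining work rather than a completion of it.
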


It is straightforward to show that every partition diagram can be obtained by compositions and tensor products of the generators $\{\mu,\delta,s,\eta,\epsilon\}$. For example, 
\[
  \begin{tikzpicture}[anchorbase]
    \pd{0.7,1};\pd{1.4,1};\pd{2.1,1};\pd{2.8,1};\pd{3.5,1};  
    \pd{0,0};\pd{0.7,0};\pd{1.4,0};\pd{2.1,0};\pd{2.8,0};\pd{3.5,0};\pd{4.2,0};
    \draw (0,0) \braidto (0.7,1);
    \draw (1.4,0) \braidto (0.7,1);
    \draw (0.7,0) to[out=up,in=up,looseness=1.5] (2.1,0);
    \draw (2.8,0) \braidto (2.1,1);
    \draw (2.8,0) \braidto (3.5,1);
    \draw (4.2,0) ..controls (3.5,0.8) and (2.1,0.2)..  (1.4,1);
  \end{tikzpicture}
  \ =\quad
  \begin{tikzpicture}[anchorbase]
    \pd{0.25,4};\pd{2,4};\pd{2.5,4};\pd{2.75,4};\pd{3,4};
    \pd{0,0};\pd{0.5,0};\pd{1,0};\pd{1.5,0};\pd{2.25,0};\pd{2.62,0};\pd{3,0};
    \draw (0,0) to (0,3) \braidto (0.25,3.5) to (0.25,4);
    \draw (0.5,0) to (0.5,1) \braidto (1,1.5) to[out=up,in=up,looseness=1.5] (1.5,1.5);
    \draw (1,0) to (1,1) \braidto (0.5,1.5) to (0.5,3) \braidto (0.25,3.5);
    \draw (1.5,0) to (1.5,1.5);
    \draw (2.25,0) to (2.25,0.5) \braidto (2,1) to (2,2.5) \braidto (2.5,3) to (2.5,4);
    \draw (2.25,0.5) \braidto (2.5,1) to (2.5,2) \braidto (3,2.5) to (3,4);
    \draw (2.62,0) to (2.62,0.15);
    \draw (3,0) to (3,1) to (3,2) \braidto (2.5,2.5) \braidto (2,3) to (2,4);
    \draw (2.75,3.85) to (2.75,4);
    \draw [dashed] (-0.5, 0.5) -- (3.5,0.5);
    \draw [dashed] (-0.5,1) -- (3.5,1);
    \draw [dashed] (-0.5,1.5) -- (3.5,1.5);
    \draw [dashed] (-0.5,2) -- (3.5,2);
    \draw [dashed] (-0.5,2.5) -- (3.5,2.5);
    \draw [dashed] (-0.5,3) -- (3.5,3);
    \draw [dashed] (-0.5,3.5) -- (3.5,3.5);
  \end{tikzpicture}
  \ =\quad
  \begin{tikzpicture}[anchorbase]
    \pd{0.25,4};\pd{2,4};\pd{2.5,4};\pd{2.75,4};\pd{3,4};
    \pd{0.25,3.5};\pd{2,3.5};\pd{2.5,3.5};\pd{3,3.5};
    \pd{0,3};\pd{0.5,3};\pd{2,3};\pd{2.5,3};\pd{3,3};
    \pd{0,2.5};\pd{0.5,2.5};\pd{2,2.5};\pd{2.5,2.5};\pd{3,2.5};
    \pd{0,2};\pd{0.5,2};\pd{2,2};\pd{2.5,2};\pd{3,2};
    \pd{0,1.5};\pd{0.5,1.5};\pd{1,1.5};\pd{1.5,1.5};\pd{2,1.5};\pd{2.5,1.5};\pd{3,1.5};
    \pd{0,1};\pd{0.5,1};\pd{1,1};\pd{1.5,1};\pd{2,1};\pd{2.5,1};\pd{3,1};
    \pd{0,0.5};\pd{0.5,0.5};\pd{1,0.5};\pd{1.5,0.5};\pd{2.25,0.5};\pd{3,0.5};
    \pd{0,0};\pd{0.5,0};\pd{1,0};\pd{1.5,0};\pd{2.25,0};\pd{2.62,0};\pd{3,0};
    \draw (0,0) to (0,3) \braidto (0.25,3.5) to (0.25,4);
    \draw (0.5,0) to (0.5,1) \braidto (1,1.5) to[out=up,in=up,looseness=1.5] (1.5,1.5);
    \draw (1,0) to (1,1) \braidto (0.5,1.5) to (0.5,3) \braidto (0.25,3.5);
    \draw (1.5,0) to (1.5,1.5);
    \draw (2.25,0) to (2.25,0.5) \braidto (2,1) to (2,2.5) \braidto (2.5,3) to (2.5,4);
    \draw (2.25,0.5) \braidto (2.5,1) to (2.5,2) \braidto (3,2.5) to (3,4);
    \draw (2.62,0) to (2.62,0.15);
    \draw (3,0) to (3,1) to (3,2) \braidto (2.5,2.5) \braidto (2,3) to (2,4);
    \draw (2.75,3.85) to (2.75,4);
  \end{tikzpicture}\ .
\]

In order to prove \cref{partitioncategorypresentation}, the next thing to do is to show that two partition diagrams are equivalent if and only if they can be transformed into each other through relations \cref{P1} to \cref{P4}. The ``if'' part is obvious. However, the ``only if'' part is very technical. We omit this proof here; interested readers should refer to \cite[\S1.4]{Koc04}, \cite[Th.~2.1]{Com16}, and \cite[Prop.~2.1]{Sav19} for more details.

\section{The planar rook category $\PR(t)$ \label{prcategorysection}}

We have defined planar rook diagrams in \cref{introduction}. In this section, we will explore some properties of planar rook diagrams, and we will define the planar rook category $\PR(t)$.

To begin with, once we specify which vertices will be isolated vertices (or, equivalently, specify which vertices are in some size $2$ blocks) in a planar rook diagram, that planar rook diagram is completely determined. (See \cite[\S1]{Her06} for a precise proof of this argument.) For example, consider a planar rook partition $D\colon 5\to7$. Once we specify $\{3,5,2',3',4',6'\}$ will be the isolated vertices, this planar rook partition can only be $\big\{ \{1,1'\} , \{2,5'\} , \{4,7'\} , \{3\} , \{5\} ,\\ \{2'\} , \{3'\} , \{4'\} , \{6\} \big\}$, and it is depicted as follows:
\[
  \begin{tikzpicture}[anchorbase]
    \pd{0,1} node[anchor=south] {$1'$};
    \pd{0.5,1} node[anchor=south] {$2'$};
    \pd{1,1} node[anchor=south] {$3'$};
    \pd{1.5,1} node[anchor=south] {$4'$};
    \pd{2,1} node[anchor=south] {$5'$};
    \pd{2.5,1} node[anchor=south] {$6'$};  
    \pd{3,1} node[anchor=south] {$7'$};
    \pd{0.5,0} node[anchor=north] {$1$};
    \pd{1,0} node[anchor=north] {$2$};
    \pd{1.5,0} node[anchor=north] {$3$};
    \pd{2,0} node[anchor=north] {$4$};
    \pd{2.5,0} node[anchor=north] {$5$};
    \draw (0.5,0) \braidto (0,1);
    \draw (1,0) \braidto (2,1);
    \draw (2,0) \braidto (3,1);
  \end{tikzpicture}
\]

\begin{prop}\label{priffprop}
  A rook diagram is planar if and only if it is a tensor product of rook diagrams with a single block.
\end{prop}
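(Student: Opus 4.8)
The plan is to prove the two directions separately, with the forward (``only if'') direction carrying the real content. For the easy direction I would first record that there are exactly three rook diagrams with a single block, namely the isolated top vertex $\eta\colon 0\to1$, the isolated bottom vertex $\varepsilon\colon 1\to0$, and the single edge $1_1\colon 1\to1$, and that each is trivially planar. Horizontally juxtaposing planar diagrams cannot create any edge crossings, since the factors occupy disjoint vertical strips; moreover a tensor product of rook diagrams is again a rook diagram, as blocks of size at most $2$ with endpoints in distinct rows are preserved under juxtaposition. Hence any tensor product of single-block rook diagrams is a planar rook diagram.

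For the main direction I would first isolate the governing planarity criterion: in a rook diagram the non-isolated vertices are joined by edges, each matching one bottom vertex to one top vertex, and the diagram is planar if and only if this matching is \emph{order-preserving}, i.e.\ whenever bottom vertices $i_1<i_2$ are matched to top vertices $j_1',j_2'$ one has $j_1'<j_2'$. This is because two edges cross precisely when their endpoints appear in opposite orders along the two rows, while isolated vertices, being mere marked points, impose no constraint. With this in hand I would argue by induction on the total number $k+l$ of vertices, peeling a single-block factor off the left at each step.

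The inductive step is where the work concentrates, and I would organize it by inspecting the two leftmost vertices, $1$ in the bottom row and $1'$ in the top row. If $1'$ is isolated, then $D=\eta\otimes D'$, where $D'$ is $D$ with $1'$ deleted. Otherwise $1'$ is matched, and order-preservation forces it to be matched to the smallest matched bottom vertex. If $1$ is isolated it is then strictly smaller than that vertex, so $D=\varepsilon\otimes D'$ with $1$ deleted; if $1$ is matched, it must be matched to $1'$ itself, so $D=1_1\otimes D'$ with both $1$ and $1'$ deleted. In every case the residual $D'$ is again a planar rook diagram, since restriction preserves the order-preserving matching, and it has strictly fewer vertices, so the induction closes (with the empty diagram as the base case, viewed as the empty tensor product).

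The main obstacle I anticipate is not any single hard computation but rather making the case analysis airtight: I must check that the three cases are exhaustive and mutually consistent, that each claimed factorization $D=(\text{single block})\otimes D'$ genuinely reproduces $D$ at the level of set partitions once one accounts for the convention that tensoring on the left prepends new vertices and relabels, and that the order-preserving property is inherited by $D'$. Establishing the planarity criterion cleanly --- in particular justifying rigorously that isolated vertices never obstruct planarity --- is the one place I would state things precisely rather than merely appeal to the pictures.
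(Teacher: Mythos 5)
Your proof is correct and takes essentially the same approach as the paper: both arguments rest on the fact that planarity forces the non-isolated bottom vertices $p_1<\cdots<p_r$ to be matched in order with the non-isolated top vertices $q'_1<\cdots<q'_r$, and then read off the single-block factors ($\eta$, $\varepsilon$, and identity strands) from left to right. The only difference is presentational --- the paper writes the entire tensor factorization down in one shot, while you package the same decomposition as an induction peeling off the leftmost factor.
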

\begin{proof}
  There are only three rook diagrams with a single block, which are shown below.
  \[
    \begin{tikzpicture}[anchorbase]
      \pd{0,0.7};
      \pd{0,0};
      \draw (0,0) to (0,0.7);
    \end{tikzpicture}
    \ ,\ 
    \begin{tikzpicture}[anchorbase]
      \pd{0,0};
      \draw (0,0) to (0,0.15);
    \end{tikzpicture}
    \ ,\ 
    \begin{tikzpicture}[anchorbase]
      \pd{0,0};
      \draw (0,-0.15) to (0,0);
    \end{tikzpicture}\ .
  \]

  The ``if'' part is obvious. 

  Now let us prove the ``only if'' part. Let $D\colon k\to l$ be a planar rook diagram with $r$ size two blocks. Suppose the non-isolated vertices in the bottom row of $D$ are labeled by $p_1<\cdots<p_r$, and the non-isolated vertices in the top row of $D$ are labeled by $q'_1<\cdots<q'_r$. Then $D$ corresponds to the following planar rook partition:
  \begin{multline*}
    \big\{ \{1\},\ \cdots,\ \{p_1-1\},\ \{1'\},\ \cdots,\ \{(q_1-1)'\},\ \{p_1,q'_1\},\ \\\{p_1+1\},\ \cdots,\ \{p_2-1\},\ \{(q_1+1)'\},\ \cdots,\ \{(q_2-1)'\},\ \cdots\cdots \big\}, 
  \end{multline*}
  and $D$ is just the tensor product of the above blocks from left to right one by one.
\end{proof}

Fix $t\in\kk$. We will define the planar rook category $\PR(t)$ in terms of generators and relations, and we will show that morphism spaces of $\PR(t)$ have linear bases given by all planar rook diagrams. This is why we call $\PR(t)$ the planar rook category.

\begin{defin}\label{prcategory}
  Fix $t\in\kk$. We define the \emph{planar rook category} $\PR(t)$ to be the strict $\kk$-linear monoidal category generated by the object $1$ and the morphisms
  \begin{equation}\label{prgenerator}
    \eta\ =\
    \begin{tikzpicture}[anchorbase]
      \pd{0,0};
      \draw (0,-0.25) to (0,0);
    \end{tikzpicture}
    \colon 0 \to 1, \quad
    \varepsilon\ =\
    \begin{tikzpicture}[anchorbase]
      \pd{0,0};
      \draw (0,0) to (0,0.25);
    \end{tikzpicture}
    \colon 1 \to 0,
  \end{equation}
  subject to the following relation:
  \begin{equation}\label{PR1}
    \begin{tikzpicture}[anchorbase]
      \pd{0,0};
      \draw (0,-0.25) to (0,0) to (0,0.25);
    \end{tikzpicture}
    \ =\ t1_{0}.
  \end{equation}
\end{defin}

\begin{cor}\label{prin}
  Every planar rook diagram can be built by a tensor product of the generators $\{\eta,\varepsilon\}$ given in \cref{prgenerator}.
\end{cor}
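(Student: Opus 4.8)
The plan is to derive this as an immediate consequence of \cref{priffprop}. First I would recall precisely what that proposition gives us: a rook diagram is planar exactly when it is a tensor product of single-block rook diagrams, and the proof of \cref{priffprop} already enumerates these single-block diagrams. There are exactly three of them, namely the identity strand $1_1\colon 1\to 1$, the morphism $\varepsilon\colon 1\to 0$, and the morphism $\eta\colon 0\to 1$. So the starting point is that any planar rook diagram $D$ can be written as a horizontal juxtaposition $D=g_1\otimes g_2\otimes\cdots\otimes g_n$ in which each factor $g_i$ is one of $1_1$, $\eta$, or $\varepsilon$.

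The key step is then just to observe that one of the three single-block pieces, the identity strand $1_1$, is not among the two listed generators, but it need not be: $1_1$ is the identity endomorphism of the generating object $1$, and identity morphisms are automatically present in any strict $\kk$-linear monoidal category. Hence every factor $g_i$ lies in $\{1_1,\eta,\varepsilon\}$, and $D$ is obtained from the generators $\eta$ and $\varepsilon$, together with identity morphisms on $1$, by tensoring alone. This is exactly the claim, so the corollary will follow in one line once \cref{priffprop} is invoked.

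I expect no real obstacle here, since all of the combinatorial content, namely that planarity forces a left-to-right factorization into single blocks, was already carried out in the proof of \cref{priffprop}. The only subtlety to flag is the bookkeeping convention: the phrase ``built by a tensor product of the generators $\{\eta,\varepsilon\}$'' must be read as allowing identity strands $1_1$ to be interspersed wherever a through-strand occurs, and I would state this explicitly so that the reader does not expect $1_1$ itself to be listed as a generator.
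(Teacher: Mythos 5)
Your proposal is correct and follows exactly the paper's route: the paper's proof is literally the one-line observation that this is the ``only if'' part of \cref{priffprop}, whose proof lists the three single-block rook diagrams $1_1$, $\eta$, $\varepsilon$. Your extra remark that the identity strand $1_1$ need not be listed among the generators because identity morphisms exist automatically in any strict $\kk$-linear monoidal category is a sensible clarification, but it does not change the argument.
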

\begin{proof}
  This is just the ``only if'' part of \cref{priffprop}.
\end{proof}

For example, consider the following planar rook diagram.
\[
  \begin{tikzpicture}[anchorbase]
    \pd{0,1};\pd{0.5,1};\pd{1,1};\pd{1.5,1};\pd{2,1};\pd{2.5,1};\pd{3,1};
    \pd{0.5,0};\pd{1,0};\pd{1.5,0};\pd{2,0};\pd{2.5,0};
    \draw (0.5,0) \braidto (0,1);
    \draw (1,0) \braidto (2,1);
    \draw (2,0) \braidto (3,1);
    \draw (1.5,0) to (1.5,0.15);
    \draw (2.5,0) to (2.5,0.15);
    \draw (0.5,0.8) to (0.5,1);
    \draw (1,0.85) to (1,1);
    \draw (1.5,0.85) to (1.5,1);
    \draw (2.5,0.85) to (2.5,1);
  \end{tikzpicture}
  \ = \ 
  \begin{tikzpicture}[anchorbase]
    \pd{0,0.7};
    \pd{0,0};
    \draw (0,0) to (0,0.7);
  \end{tikzpicture}
  \ \otimes\ 
  \begin{tikzpicture}[anchorbase]
    \pd{0,0};
    \draw (0,-0.25) to (0,0);
  \end{tikzpicture}
  \ \otimes\ 
  \begin{tikzpicture}[anchorbase]
    \pd{0,0};
    \draw (0,-0.25) to (0,0);
  \end{tikzpicture}
  \ \otimes\ 
  \begin{tikzpicture}[anchorbase]
    \pd{0,0};
    \draw (0,-0.25) to (0,0);
  \end{tikzpicture}
  \ \otimes\ 
  \begin{tikzpicture}[anchorbase]
    \pd{0,0.7};
    \pd{0,0};
    \draw (0,0) to (0,0.7);
  \end{tikzpicture}
  \ \otimes\ 
  \begin{tikzpicture}[anchorbase]
    \pd{0,0};
    \draw (0,-0.25) to (0,0);
  \end{tikzpicture}
  \ \otimes\ 
  \begin{tikzpicture}[anchorbase]
    \pd{0,0};
    \draw (0,0) to (0,0.25);
  \end{tikzpicture}
  \ \otimes\ 
  \begin{tikzpicture}[anchorbase]
    \pd{0,0.7};
    \pd{0,0};
    \draw (0,0) to (0,0.7);
  \end{tikzpicture}
  \ \otimes\ 
  \begin{tikzpicture}[anchorbase]
    \pd{0,0};
    \draw (0,0) to (0,0.25);
  \end{tikzpicture}
\]

Some readers may worry about the order of the appearance of $\eta$ and $\varepsilon$ in the above construction. However, we have the following proposition, which shows that $\eta$ and $\varepsilon$ commute so that there is no need to worry about the order of $\eta$ and $\varepsilon$.

\begin{prop}\label{prmorerelations}
  The following relation holds in the planar rook category $\PR(t)$:
  \begin{equation}\label{PR2}
    \begin{tikzpicture}[{baseline={(0,0.25)}}]
      \pd{0,0.7};
      \draw (0,0.45) to (0,0.7);
    \end{tikzpicture}
    \otimes
    \begin{tikzpicture}[{baseline={(0,0.25)}}]
      \pd{0,0};
      \draw (0,0) to (0,0.25);
    \end{tikzpicture}
    \ =\
    \begin{tikzpicture}[{baseline={(0,0.25)}}]
      \pd{0,0};
      \draw (0,0) to (0,0.25);
    \end{tikzpicture}
    \otimes
    \begin{tikzpicture}[{baseline={(0,0.25)}}]
      \pd{0,0.7};
      \draw (0,0.45) to (0,0.7);
    \end{tikzpicture}
  \end{equation}
\end{prop}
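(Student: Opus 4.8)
The plan is to show that both sides of \cref{PR2} coincide with the single vertical composite $\eta\circ\varepsilon\colon 1\to 1$, so that the horizontal order of the two generators becomes irrelevant. The crucial point is that both $\eta$ and $\varepsilon$ involve the unit object $0=\one$, and tensoring a morphism with the identity $1_0=1_{\one}$ changes nothing, by the unit axioms \cref{ax2,ax4}. So I would first record the trivial identities $\eta\circ 1_0=\eta=1_0\circ\eta$ (interpreted with matching types, e.g.\ $\eta\colon 0\to 1$) together with $\eta\otimes 1_0=\eta=1_0\otimes\eta$, and likewise for $\varepsilon$, keeping careful track of domains and codomains so that every composite below factors through the object $0$.

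The main step is then a pair of applications of the interchange law \cref{interchangelaw} (equivalently, bifunctoriality of $\otimes$). On one hand,
\[
  \eta\otimes\varepsilon
  =(\eta\circ 1_0)\otimes(1_0\circ\varepsilon)
  =(\eta\otimes 1_0)\circ(1_0\otimes\varepsilon)
  =\eta\circ\varepsilon,
\]
where the first equality is the unit axiom, the second is interchange, and the third is again the unit axiom. On the other hand, inserting the unit on the opposite side,
\[
  \varepsilon\otimes\eta
  =(1_0\circ\varepsilon)\otimes(\eta\circ 1_0)
  =(1_0\otimes\eta)\circ(\varepsilon\otimes 1_0)
  =\eta\circ\varepsilon.
\]
Comparing the two right-hand sides yields $\eta\otimes\varepsilon=\varepsilon\otimes\eta$, which is exactly \cref{PR2}. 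The intermediate composites $(\eta\otimes 1_0)\circ(1_0\otimes\varepsilon)$ and $(1_0\otimes\eta)\circ(\varepsilon\otimes 1_0)$ are each readily checked to have matching domains and codomains (both passing through $0$ in the middle), so all compositions written above are legitimate.

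I expect this argument to be almost entirely formal: it relies only on the axioms of a strict $\kk$-linear monoidal category and, notably, does \emph{not} use the defining relation \cref{PR1} at all. The sole point requiring care --- and the only real ``obstacle'' --- is the bookkeeping of objects when inserting $1_0$, since it is easy to write down an ill-typed composite by accident. Drawing the corresponding string diagrams makes the type-matching transparent: each side is the partition diagram consisting of one isolated vertex in the top row beside one isolated vertex in the bottom row, i.e.\ $\{\{1\},\{1'\}\}$, so the two tensor orders manifestly describe the same morphism, which is precisely what the computation above confirms.
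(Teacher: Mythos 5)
Your proof is correct and is essentially the paper's own argument: the paper disposes of \cref{PR2} in one line as ``a corollary of the interchange law'' \cref{interchangelaw}, and your computation --- showing both $\eta\otimes\varepsilon$ and $\varepsilon\otimes\eta$ equal $\eta\circ\varepsilon$ via interchange plus the strict unit axioms \cref{ax2,ax4} --- is exactly the spelled-out version of that, including the (correct) observation that relation \cref{PR1} plays no role.
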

\begin{proof}
  This is just a corollary of the interchange law given in \cref{interchangelaw}.
\end{proof}

\begin{lem}\label{prcompose}
  Let $D\colon l\to m$ and $D'\colon k\to l$ be two planar rook diagrams. Then the diagram $D'':=D\circ D'$ is equal to a power of $t$ times a planar rook diagram.
\end{lem}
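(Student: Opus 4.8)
The plan is to compute $D''=D\circ D'$ directly from the formula $D\circ D'=t^{\alpha(D,D')}\,D\star D'$ that defines composition in $\Par(t)$ (see \cref{partitioncategorysubsection}), and to check that both factors behave as claimed when $D$ and $D'$ are planar rook diagrams. The governing observation is that a planar rook diagram is nothing but an order-preserving partial bijection between its bottom and top vertices: every nonsingleton block has size $2$ and joins exactly one bottom vertex to one top vertex by a single strand, and planarity says these strands do not cross.

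First I would analyze the connected components of the stacked diagram $\begin{matrix}D\\ D'\end{matrix}$, labelling the $l$ vertices of the shared middle row. Because every block of a rook diagram has size at most $2$ and joins vertices in distinct rows, each middle vertex is attached to at most one vertex below it (through $D'$) and to at most one vertex above it (through $D$). Hence every connected component of the stacked diagram is a path of length at most two, and in $D\star D'$ a bottom vertex is joined to a top vertex precisely when a full strand passes through a middle vertex. This shows at once that $D\star D'$ is again a rook diagram: no block can acquire size greater than $2$, and every surviving size-$2$ block still joins one bottom vertex to one top vertex.

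Next I would verify that $D\star D'$ is planar. Since the strands of $D$ and of $D'$ are individually non-crossing and respect the left-to-right order of the vertices, their concatenation through the middle row again respects this order, so the surviving bottom-to-top strands of $D\star D'$ do not cross; thus $D\star D'$ is a planar rook diagram. (Equivalently, one may invoke \cref{priffprop} and read the composite off blockwise from left to right as a tensor product of single-block rook diagrams.) Finally, the scalar $t^{\alpha(D,D')}$ is genuinely a power of $t$: middle vertices are never joined directly to one another, so the only components lying entirely in the middle row are those middle vertices that are isolated both in $D$ (as a bottom vertex) and in $D'$ (as a top vertex); hence $\alpha(D,D')$ is a well-defined nonnegative integer. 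Combining these facts gives $D''=t^{\alpha(D,D')}(D\star D')$, a power of $t$ times a planar rook diagram.

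The step demanding the most care is the preservation of planarity, where one must argue cleanly that composing two non-crossing, order-preserving partial matchings cannot create a crossing. Everything else—the size bound on blocks and the nonnegativity of the exponent—is a direct bookkeeping consequence of the definition of rook diagrams and of the partition-category composition rule.
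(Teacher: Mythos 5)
Your combinatorial analysis of the stacked diagram is sound, but it proves the statement in the wrong category, and that is a genuine gap. You compute $D\circ D'$ from the formula $D\circ D'=t^{\alpha(D,D')}D\star D'$, which is the \emph{definition} of composition in the partition category $\Par(t)$. The lemma, however, is about composition in the planar rook category $\PR(t)$, which is defined abstractly by the generators $\eta,\varepsilon$ and the single relation \cref{PR1}; no $\star$ formula is available there. The whole point of the lemma --- and the way it is used in \cref{prcomb} --- is that the equality $D\circ D'=t^{\alpha}P$ is \emph{derivable from the defining relation of $\PR(t)$} together with the axioms of a strict $\kk$-linear monoidal category. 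What you establish instead is the corresponding identity between the images of these morphisms in $\Par(t)$. To transport that identity back to $\PR(t)$ you would need the canonical functor $F\colon\PR(t)\to\Par(t)$ of \cref{prindependent} to be faithful, which is not known at this stage; faithfulness on these morphism spaces is essentially the content of the basis theorem \cref{prbasistheorem} that this lemma is being used to prove, so invoking it here would be circular. Put concretely: if the relation \cref{PR1} and the monoidal axioms happened to be too weak to force these reductions, your computation in $\Par(t)$ would go through unchanged, yet the lemma would be false and $\Hom_{\PR(t)}(k,l)$ would be strictly larger than the span of the planar rook diagrams.

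The repair is close to what you wrote, because your case analysis of how strands meet a middle vertex is exactly the right combinatorics --- it just has to be justified inside $\PR(t)$. This is what the paper's proof does, via local substitutions each of which is an identity in $\PR(t)$: a strand passing straight through a middle vertex is $1_1\circ 1_1=1_1$; the configurations $1_1\circ\eta=\eta$ and $\varepsilon\circ 1_1=\varepsilon$ are unit axioms for composition; and a middle vertex isolated on both sides is $\varepsilon\circ\eta=t1_0$, which is precisely relation \cref{PR1}. Together with the interchange law \cref{interchangelaw}, these moves reduce the stacked diagram to a power of $t$ times a tensor product of $1_1$'s, $\eta$'s, and $\varepsilon$'s, which is a planar rook diagram by \cref{priffprop} --- and every step is an equality of morphisms in $\PR(t)$, as required.
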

\begin{proof}
  We make the following local substitutions in $D''$.
  \begin{enumerate}
    \renewcommand{\theenumi}{(\alph{enumi})}
    \item\label{prreplacea} replace every
    \[\ \ \begin{tikzpicture}[anchorbase]
      \pd{0,1};
      \pd{0,0.5};
      \pd{0,0};
      \draw (0,0) to (0,0.5) to (0,1);
    \end{tikzpicture}\ \ 
    \text{with}        
    \ \ \begin{tikzpicture}[anchorbase]
      \pd{0,0.5};
      \pd{0,0};
      \draw (0,0) to (0,0.5);
    \end{tikzpicture}\ ,\]
  \item\label{prreplaceb} replace every
  \[\ \ \begin{tikzpicture}[anchorbase]
    \pd{0,0.5};
    \pd{0,0};
    \draw (0,-0.25) to (0,0) to (0,0.5);
  \end{tikzpicture}\ \ 
  \text{with}
  \ \ \begin{tikzpicture}[anchorbase]
    \pd{0,0};
    \draw (0,-0.25) to (0,0);
  \end{tikzpicture}\ ,\]
  \item\label{prreplacec} replace every
  \[\ \ \begin{tikzpicture}[anchorbase]
    \pd{0,0.5};
    \pd{0,0};
    \draw (0,0) to (0,0.5) to (0,0.75);
  \end{tikzpicture}\ \ 
  \text{with} 
  \ \ \begin{tikzpicture}[anchorbase]
    \pd{0,0};
    \draw (0,0) to (0,0.25);
  \end{tikzpicture}\ ,\]
  \item\label{prreplaced} replace every
  $\ \begin{tikzpicture}[anchorbase]
    \pd{0,0};
    \draw (0,-0.15) to (0,0) to (0,0.15);
  \end{tikzpicture}\ $
  with a scalar multiple $t$.
\end{enumerate}
After steps \cref{prreplacea} to \cref{prreplaced}, what we obtain is a power of $t$ times a planar rook diagram.
\end{proof}
For example, if
\[
  D\ =\ 
  \begin{tikzpicture}[anchorbase]
    \pd{0.25,1};\pd{0.75,1};\pd{1.25,1};\pd{1.75,1};\pd{2.25,1};
    \pd{0,0.5};\pd{0.5,0.5};\pd{1,0.5};\pd{1.5,0.5};\pd{2,0.5};\pd{2.5,0.5};
    \draw (1,0.5) \braidto (1.25,1);
    \draw (1.5,0.5) to (1.5,0.65);
    \draw (0,0.5) to (0,0.65);
    \draw (0.5,0.5) \braidto (0.75,1);
    \draw (2,0.5) to (2,0.65);
    \draw (2.5,0.5) \braidto (1.75,1);
    \draw (0.25,0.85) to (0.25,1);
    \draw (2.25,0.85) to (2.25,1);
  \end{tikzpicture}
  \ ,\ \text{and}\quad
  D'\ =\ 
  \begin{tikzpicture}[anchorbase]
    \pd{0,0.5};\pd{0.5,0.5};\pd{1,0.5};\pd{1.5,0.5};\pd{2,0.5};\pd{2.5,0.5};
    \pd{0.5,0};\pd{1,0};\pd{1.5,0};\pd{2,0};
    \draw (0.5,0) \braidto (1,0.5);
    \draw (1,0) \braidto (1.5,0.5);
    \draw (1.5,0) to (1.5,0.15);
    \draw (2,0) to (2,0.15);
    \draw (0,0.35) to (0,0.5);
    \draw (0.5,0.35) to (0.5,0.5);
    \draw (2,0.35) to (2,0.5);
    \draw (2.5,0.35) to (2.5,0.5);
  \end{tikzpicture}
  \ ,
\]
then we have
\[
  D'':=D\circ D'\ =\ 
  \begin{tikzpicture}[anchorbase]
    \pd{0.25,1};\pd{0.75,1};\pd{1.25,1};\pd{1.75,1};\pd{2.25,1};
    \pd{-0.125,0.5};\pd{0.375,0.5};\pd{0.875,0.5};\pd{1.5,0.5};\pd{2,0.5};\pd{2.5,0.5};
    \pd{0.5,0};\pd{1,0};\pd{1.5,0};\pd{2,0};
    \draw (0.5,0) \braidto (1.25,1);
    \draw (1,0) \braidto (1.5,0.5) to (1.5,0.65);
    \draw (1.5,0) to (1.5,0.15);
    \draw (2,0) to (2,0.15);
    \draw (-0.125,0.35) to (-0.125,0.5) to (-0.125,0.65);
    \draw (0.375,0.35) to (0.375,0.5) \braidto (0.75,1);
    \draw (2,0.35) to (2,0.5) \braidto (2,0.65);
    \draw (2.5,0.35) to (2.5,0.5) \braidto (1.75,1);
    \draw (0.25,0.85) to (0.25,1);
    \draw (2.25,0.85) to (2.25,1);
  \end{tikzpicture}
  \ =\ t^2\ 
  \begin{tikzpicture}[anchorbase]
    \pd{0.25,0.7};\pd{0.75,0.7};\pd{1.25,0.7};\pd{1.75,0.7};\pd{2.25,0.7};
    \pd{0.5,0};\pd{1,0};\pd{1.5,0};\pd{2,0};
    \draw (0.5,0) \braidto (1.25,0.7);
    \draw (1,0) to (1,0.15);
    \draw (1.5,0) to (1.5,0.15);
    \draw (2,0) to (2,0.15);
    \draw (0.25,0.55) to (0.25,0.7);
    \draw (0.75,0.55) to (0.75,0.7);
    \draw (1.75,0.55) to (1.75,0.7);
    \draw (2.25,0.55) to (2.25,0.7);
  \end{tikzpicture}
  \ .
\]

\begin{prop}\label{prcomb}
  Every morphism in $\Hom_{\PR(t)}(k,l)$ is a $\kk$-linear combination of planar rook diagrams of type $\binom{l}{k}$.
\end{prop}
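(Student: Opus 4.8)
The plan is to run a closure argument: I will verify that the morphisms which are $\kk$-linear combinations of planar rook diagrams already contain the generators of $\PR(t)$ and are closed under both composition and tensor product. Since $\PR(t)$ is by definition the strict $\kk$-linear monoidal category generated by $\eta$ and $\varepsilon$, this forces every morphism to be such a combination.

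First I would reduce to a single monomial. By the description of a category presented by generators and relations, every element of $\Hom_{\PR(t)}(k,l)$ is a $\kk$-linear combination of morphisms obtained from $\eta$, $\varepsilon$, and the identities $1_1,1_0$ by iterated $\otimes$ and $\circ$. Because $\otimes$ and $\circ$ are $\kk$-bilinear, it suffices to show that each such monomial equals $t^{j}$ times a single planar rook diagram for some $j\in\N$; summing over the terms then yields the claim.

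I would prove this by induction on the number of operation symbols in the monomial. The base cases $\eta,\varepsilon,1_1,1_0$ are themselves planar rook diagrams, of types $\binom{1}{0}$, $\binom{0}{1}$, $\binom{1}{1}$, and $\binom{0}{0}$. For the inductive step, the outermost operation is either a tensor product $f\otimes g$ or a composition $f\circ g$ of strictly shorter monomials, so by hypothesis $f=t^{a}P$ and $g=t^{b}Q$ for planar rook diagrams $P,Q$. If the operation is $\otimes$, then horizontal juxtaposition preserves planarity as well as the property that every block has size at most $2$ with its two vertices in distinct rows, so $P\otimes Q$ is again a planar rook diagram and $f\otimes g=t^{a+b}(P\otimes Q)$. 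If the operation is $\circ$, then \cref{prcompose} gives $P\circ Q=t^{c}R$ for some planar rook diagram $R$, whence $f\circ g=t^{a+b+c}R$. Either way the monomial is a scalar multiple of a planar rook diagram, closing the induction.

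The genuinely substantive input is \cref{prcompose}, which is already available and controls compositions; the tensor-product case is immediate from the definition of planar rook diagram. The point requiring the most care is the reduction to monomials and the observation that each monomial has a well-defined outermost $\otimes$ or $\circ$ with shorter arguments --- this is exactly what makes the induction go through, and it rests only on the associativity and unit axioms of a strict monoidal category together with the $\kk$-bilinearity of the two compositions. Once those formalities are in place, combining the trivial tensor closure with \cref{prcompose} gives the result with no further computation.
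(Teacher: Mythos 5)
Your proof is correct and takes essentially the same approach as the paper: both reduce to monomials in the generators via $\kk$-bilinearity and then show each monomial equals a power of $t$ times a planar rook diagram, with \cref{prcompose} as the key input for compositions. The only difference is bookkeeping --- the paper first rewrites each monomial as a vertical composition of layers, each layer a tensor product of the generators (hence itself a planar rook diagram), and then applies \cref{prcompose} repeatedly, whereas you run a structural induction on the expression tree and dispose of the tensor case by noting that a tensor product of planar rook diagrams is again a planar rook diagram.
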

\begin{proof}
  By definition, every morphism $f\in\Hom_{\PR(t)}(k,l)$ is of the form $f=\sum_{i=1}^{n}a_iD_i$, where $n$ is some nonnegative integer, $a_i\in\kk$, and $D_i\colon k_i\to l_i$ is a diagram obtained by compositions and tensor products of the generators $\{\eta,\varepsilon\}$ for all $i$. Clearly, each $D_i$ has a decomposition
  \[
    D_i=D_i^{1}\circ\cdots\circ D_i^{m_i},
  \]
  where $m_i$ is some nonnegative integer, and each $D_i^j$ is a tensor product of the generators $\{\eta,\varepsilon\}$ for all $i,j$; thus a planar rook diagram for all $i,j$. Then by using \cref{prcompose} $m_i-1$ times, we get that
  \[
    D_i=t^{\alpha_i}D'_i,
  \]
  where $\alpha_i$ is a nonnegative integer, and $D'_i$ is a planar rook diagram for all $i$. Therefore.
  \[
    f=\sum_{i=1}^{n}a_iD_i=\sum_{i=1}^{n}a_it^{\alpha_i}D'_i
  \]
  is a linear combination of planar rook diagrams.
\end{proof}

\begin{prop}\label{prindependent}
  All the planar rook diagrams of type $\binom{l}{k}$ are linearly independent in the morphism space $\Hom_{\PR(t)}(k,l)$.
\end{prop}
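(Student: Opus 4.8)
The plan is to construct a strict $\kk$-linear monoidal functor $F\colon\PR(t)\to\Par(t)$ and use it to pull back the linear independence of partition diagrams, which holds by the very definition of $\Par(t)$. First I would invoke the universal property of a category presented by generators and relations: to define such an $F$ it suffices to send the generating object $1$ of $\PR(t)$ to the object $1$ of $\Par(t)$, to send the generating morphisms $\eta,\varepsilon$ to the partition diagrams denoted by the same symbols in $\Par(t)$, and to check that the single defining relation \cref{PR1} is respected. The latter is immediate: in $\Par(t)$ the composite $\varepsilon\circ\eta\colon 0\to 0$ produces exactly one connected component lying entirely in the middle row, so $\varepsilon\circ\eta=t^{1}1_{0}=t1_{0}$, matching \cref{PR1}. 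Hence $F$ is well defined.

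Next I would observe that $F$ fixes planar rook diagrams. By \cref{prin}, every planar rook diagram $P$ of type $\binom{l}{k}$ is a tensor product of the generators $\eta$, $\varepsilon$, and the identity $1_{1}$; since $F$ is monoidal and fixes each of these, $F(P)$ is the partition diagram obtained from the same tensor product, which is precisely the planar rook partition $P$ regarded as a morphism of $\Par(t)$. In particular, if $P$ and $P'$ are distinct planar rook diagrams, then $F(P)$ and $F(P')$ are distinct partition diagrams.

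Finally, suppose $\sum_{i} a_i P_i=0$ in $\Hom_{\PR(t)}(k,l)$ for pairwise distinct planar rook diagrams $P_i$ and scalars $a_i\in\kk$. Applying $F$ gives $\sum_{i} a_i F(P_i)=0$ in $\Hom_{\Par(t)}(k,l)$. The $F(P_i)$ are pairwise distinct partition diagrams of type $\binom{l}{k}$, and the morphism space $\Hom_{\Par(t)}(k,l)$ is by definition the free $\kk$-module on all partition diagrams of that type; therefore every $a_i=0$. This establishes the claimed linear independence, and combined with the spanning statement \cref{prcomb} it shows that the planar rook diagrams in fact form a basis of $\Hom_{\PR(t)}(k,l)$.

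The only point requiring care --- and the crux of the argument --- is the construction of $F$: one must confirm that \cref{PR1} is the sole constraint to verify and that it genuinely holds in $\Par(t)$. Once $F$ exists, independence is inherited for free, since partition diagrams are a $\kk$-basis of the morphism spaces of $\Par(t)$ by fiat. I do not anticipate a genuine obstacle here, as $\PR(t)$ has a single transparent defining relation; the same functorial strategy will serve as the template for the corresponding independence statements in the rook, rook-Brauer, and Motzkin categories treated later, where the verification that all defining relations hold in $\Par(t)$ becomes the correspondingly more substantial step.
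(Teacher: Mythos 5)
Your proposal is correct and follows essentially the same route as the paper: define a strict $\kk$-linear monoidal functor $F\colon\PR(t)\to\Par(t)$ via $1\mapsto 1$, $\eta\mapsto\eta$, $\varepsilon\mapsto\varepsilon$, note that \cref{PR1} holds in $\Par(t)$ so $F$ is well defined, observe that $F$ sends each planar rook diagram to the same diagram in $\Par(t)$, and pull back linear independence from the fact that partition diagrams are by definition a free $\kk$-basis of the morphism spaces of $\Par(t)$. Your write-up in fact fills in details the paper leaves implicit (the explicit verification of \cref{PR1} in $\Par(t)$ and the use of \cref{prin} to see that $F$ fixes planar rook diagrams), so no changes are needed.
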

\begin{proof}
  We can define a strict $\kk$-linear monoidal functor $F\colon\PR(t)\to\Par(t)$ by 
  \[
    1\mapsto 1,\quad \eta\mapsto\eta,\quad \varepsilon\mapsto\varepsilon.
  \]
  The functor $F$ is well-defined because the relation \cref{PR1} holds in $\Par(t)$ (the relation \cref{P4}). In addition, the functor $F$ induces a $\kk$-module homomorphism between the morphism spaces
  \[
    F_{k,l}\colon \Hom_{\PR(t)}(k,l)\to\Hom_{\Par(t)}(k,l),
  \]
  which maps each planar rook diagram of type $\binom{l}{k}$ in $\PR(t)$ to the same planar rook diagram in $\Par(t)$. It is a well-known fact in linear algebra that if the images of some elements under a linear map are linearly independent, then the elements are linearly independent in the domain. Therefore, all planar rook diagrams of type $\binom{l}{k}$ are linearly independent in $\Hom_{\PR(t)}(k,l)$. 
\end{proof}

\begin{theo}\label{prbasistheorem}
  The morphism space $\Hom_{\PR(t)}(k,l)$ has a linear basis given by all planar rook diagrams of type $\binom{l}{k}$.
\end{theo}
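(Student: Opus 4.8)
The plan is to observe that \cref{prbasistheorem} is an immediate consequence of the two propositions that directly precede it, since a linear basis of a $\kk$-module is by definition a linearly independent spanning set. Concretely, I would first invoke \cref{prcomb}, which asserts that every morphism in $\Hom_{\PR(t)}(k,l)$ is a $\kk$-linear combination of planar rook diagrams of type $\binom{l}{k}$; this supplies the spanning half of the claim. I would then invoke \cref{prindependent}, which asserts that these same diagrams are linearly independent in $\Hom_{\PR(t)}(k,l)$; this supplies the independence half. Combining the two statements yields that the planar rook diagrams of type $\binom{l}{k}$ form a basis of the morphism space, which is exactly the assertion of the theorem.

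Since both halves are already established, there is no substantial computation left to carry out; the genuine content has been front-loaded into the earlier results. The spanning statement rests on \cref{prcompose}, which guarantees that the composite of two planar rook diagrams is a scalar power of $t$ times a single planar rook diagram, applied repeatedly to collapse any word in the generators $\{\eta,\varepsilon\}$ down to one diagram. The independence statement rests on the well-defined functor $F\colon\PR(t)\to\Par(t)$ of \cref{prindependent}, together with the fact that distinct planar rook diagrams are genuinely distinct, linearly independent elements of $\Par(t)$, so that a linear map carrying them to an independent family forces their independence upstairs.

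If I were to flag any point requiring care, it is purely a matter of bookkeeping: one must confirm that the spanning family and the independent family are literally the same indexing set, namely all planar rook diagrams of type $\binom{l}{k}$, with no two distinct diagrams becoming equal inside $\PR(t)$. This is ensured precisely because $F_{k,l}$ sends distinct diagrams to distinct diagrams in $\Par(t)$, so no identification can occur. With that compatibility noted, the theorem follows directly, and I would not expect this final step to present any real obstacle.
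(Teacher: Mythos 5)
Your proposal is correct and takes essentially the same approach as the paper, which likewise obtains the theorem by combining \cref{prcomb} (spanning) with \cref{prindependent} (linear independence). The one small difference is that the paper's proof also cites \cref{prin} --- that every planar rook diagram is a tensor product of the generators $\{\eta,\varepsilon\}$ --- which is what guarantees that every planar rook diagram of type $\binom{l}{k}$ is genuinely an element of $\Hom_{\PR(t)}(k,l)$ in the first place; your bookkeeping remark handles distinctness of diagrams but leaves this membership point implicit.
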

\begin{proof}
  The proof is a combination of \cref{prin}, \cref{prcomb}, and \cref{prindependent}.
\end{proof}

\begin{cor}
  Fix $k\in\N$. The endomorphism algebra $\End_{\PR(t)}(k)$ is the \emph{planar rook algebra} $PR_k(t)$ given in \cref{introduction}.
\end{cor}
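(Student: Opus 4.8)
The plan is to exhibit an explicit isomorphism of $\kk$-algebras between $\End_{\PR(t)}(k)$ and $PR_k(t)$, using the functor $F$ constructed in the proof of \cref{prindependent}. First I would observe that $\End_{\PR(t)}(k) = \Hom_{\PR(t)}(k,k)$, so by \cref{prbasistheorem} it is a free $\kk$-module with basis the set of all planar rook diagrams of type $\binom{k}{k}$. On the other side, $PR_k(t)$ is by definition the subalgebra of the partition algebra $P_k(t) = \End_{\Par(t)}(k)$ spanned by exactly these planar rook diagrams, so the two algebras already share ``the same'' underlying basis; the only thing in question is whether their multiplications agree.

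Next I would restrict the strict $\kk$-linear monoidal functor $F\colon\PR(t)\to\Par(t)$ to the endomorphism algebra of the object $k$, obtaining a $\kk$-linear map
\[
  F_{k,k}\colon \End_{\PR(t)}(k)\to\End_{\Par(t)}(k)=P_k(t).
\]
Because $F$ is a functor, it preserves identity morphisms and composition, so $F_{k,k}$ is a homomorphism of $\kk$-algebras; in particular the categorical composition in $\PR(t)$ is carried to the partition-algebra multiplication $D\circ D'=t^{\alpha(D,D')}D\star D'$. By the description of $F$ in \cref{prindependent}, $F_{k,k}$ sends each planar rook diagram of type $\binom{k}{k}$ to the identical planar rook diagram viewed inside $P_k(t)$.

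Finally I would argue that $F_{k,k}$ is an isomorphism onto $PR_k(t)$. It carries the basis of $\End_{\PR(t)}(k)$ bijectively to the set of planar rook diagrams of type $\binom{k}{k}$ in $P_k(t)$, which is precisely a basis of $PR_k(t)$; hence $F_{k,k}$ is injective (as already used in \cref{prindependent}) with image exactly $PR_k(t)$. Since a bijective $\kk$-algebra homomorphism is an algebra isomorphism, we conclude $\End_{\PR(t)}(k)\cong PR_k(t)$.

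I do not expect a genuine obstacle here: the statement is essentially a bookkeeping corollary of \cref{prbasistheorem} together with the functoriality of $F$. The only point requiring a moment's care is to confirm that $F_{k,k}$ really is multiplicative, i.e.\ that composition in $\PR(t)$ matches the rule $D\circ D'=t^{\alpha(D,D')}D\star D'$ on the image, and this is immediate from $F$ being a monoidal functor rather than something that must be verified diagram by diagram.
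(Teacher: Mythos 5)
Your proof is correct and takes essentially the same route as the paper: the corollary is deduced from \cref{prbasistheorem}, which identifies the planar rook diagrams of type $\binom{k}{k}$ as a $\kk$-basis of $\End_{\PR(t)}(k)$. The only difference is that you make explicit, via the functor $F$ of \cref{prindependent}, why composition in $\PR(t)$ agrees with the multiplication $D\circ D'=t^{\alpha(D,D')}D\star D'$ of $PR_k(t)$ inside $P_k(t)$ --- a point the paper's one-line proof leaves implicit.
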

\begin{proof} This is a corollary of \cref{prbasistheorem}.
\end{proof}

\section{The rook category $\cR(t)$ \label{rookcategorysection}}

We have defined rook diagrams in \cref{introduction}. In this section, we will discuss some properties of rook diagrams, and we will define the rook category $\cR(t)$.

\subsection{Rook matrices\label{rookoriginsubsection}}
Let $M_{l\times k}(\kk)$ denote the set of matrices with $l$ rows and $k$ columns over a commutative ring $\kk$. A matrix $A\in M_{l\times k}(\kk)$ is called a \emph{rook matrix} of size $l\times k$ if each entry of $A$ is equal to either $0$ or $1$, and $A$ has at most one entry equal to $1$ in each row and column. The reason why they are called the rook matrices is that such matrices look like non-attacking rooks on a $l\times k$ chessboard. For example, 
\[
  \begin{bmatrix}
    0 & 1 & 0 & 0 & 0 \\
    0 & 0 & 0 & 0 & 1 \\
    0 & 0 & 0 & 0 & 0 \\
    0 & 0 & 1 & 0 & 0 \\
  \end{bmatrix}
\]
is a rook matrix of size $4\times 5$. Let $R_{l\times k}$ denote the collection of all rook matrices in $M_{l\times k}(\kk)$. A \emph{permutation matrix} of size $k\times k$ is a matrix obtained by permuting the rows of the $k\times k$ identity matrix. Let $S_{k\times k}$ denote the collection of all permutation matrices in $M_{k\times k}(\kk)$. Clearly, we have $S_{k\times k}\subseteq R_{k\times k}$, and $S_{k\times k}$ are the only full-rank ones in $R_{k\times k}$. 

Actually, there are very nice correspondences between rook diagrams and rook matrices. Let $D\colon k\to l$ be a rook diagram with $r$ size two blocks. Suppose the non-isolated vertices in the bottom row of $D$ are labeled by $p_1<\cdots<p_r$, and the non-isolated vertices in the top row of $D$ are labeled by $q'_1<\cdots<q'_r$. We can associate $D$ with a rook matrix $[D]\in R_{l\times k}$ by the following map:
\begin{equation}\label{matrixmap}
  D \mapsto [D]=\sum_{i=1}^{r}E_{q_i,p_i}^{l\times k},
\end{equation}
where $E^{l\times k}_{m,n}$ is a matrix in $M_{l\times k}(\kk)$ with an entry $1$ in the $(m,n)$ position and $0's$ elsewhere. For example, the rook diagram
\[
  D\ =\ 
  \begin{tikzpicture}[anchorbase]
    \pd{0.25,1};\pd{0.75,1};\pd{1.25,1};\pd{1.75,1};
    \pd{0,0};\pd{0.5,0};\pd{1,0};\pd{1.5,0};\pd{2,0};
    \draw (0.5,0) \braidto (0.25,1);
    \draw (1,0) \braidto (1.75,1);
    \draw (2,0)\braidto(0.75,1);
  \end{tikzpicture}\ ,
\]
is mapped to the rook matrix 
\[
  [D]\ =\ 
  \begin{bmatrix}
    0 & 1 & 0 & 0 & 0 \\
    0 & 0 & 0 & 0 & 1 \\
    0 & 0 & 0 & 0 & 0 \\
    0 & 0 & 1 & 0 & 0 \\
  \end{bmatrix}\ .
\]
Moreover, it is easy to check that the map \cref{matrixmap} is bijective, and the number of size $2$ blocks in $D$ is equal to the rank of $[D]$. 

A rook diagram of type $\binom{k}{k}$ is called a \emph{permutation diagram} if all of its blocks have size $2$. Let $S_k(t)$ denote the collection of all permutation diagrams of type $\binom{k}{k}$.

If we restrict the map \cref{matrixmap} to $S_k(t)$, we get an algebra isomorphism between $S_k(t)$ and $S_{k\times k}$ (no matter what $t$ is). If $l=k$ and $t=1$, the map \cref{matrixmap} becomes an algebra isomorphism between the rook algebra $R_k(1)$ and $R_{k\times k}$. 

For an arbitrary matrix, the first nonzero entry in each row is called a \emph{leading entry}. \color{black}Inspired by the echelon form of a matrix, we define the pseudo-echelon form of a rook matrix.
\begin{defin}\label{pseudoechelonform}
  A rook matrix $[D]$ is said to be in \emph{pseudo-echelon form} if each leading entry of $[D]$ is in a column to the right of all leading entries above it.
\end{defin}

Contrary to the traditional echelon forms, we do not distinguish row and column pseudo-echelon forms because they are the same due to the following two facts:
\begin{itemize}
  \item There is only one nonzero entry in each nonzero row and column of a rook matrix. 
  \item We do not require the zero rows to be below the nonzero rows in a rook matrix in pseudo-echelon form.
\end{itemize}

Apart from \cref{priffprop}, we have another characterization of planar rook diagrams.
\begin{prop}\label{pseudoiffprop}
  A rook diagram $D$ is planar if and only if the rook matrix $[D]$ is in pseudo-echelon form.
\end{prop}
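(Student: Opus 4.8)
The plan is to reduce both conditions --- planarity of $D$ and pseudo-echelon form of $[D]$ --- to a single combinatorial condition on the pairing between the bottom and top non-isolated vertices, and then observe that the two translations agree. Retaining the notation of \cref{matrixmap}, write the size two blocks of $D$ as $\{p_i, q_i'\}$ for $i = 1,\dots,r$, where $p_1 < \cdots < p_r$ are the non-isolated bottom vertices and $q_i'$ is the top vertex joined to $p_i$. The remaining vertices are singletons; they contribute no strands to the diagram and no leading entries to $[D] = \sum_{i=1}^{r} E^{l\times k}_{q_i, p_i}$, so they may be ignored. I claim that both conditions are equivalent to the monotonicity statement
\[
  p_i < p_j \implies q_i < q_j \quad \text{for all } i,j,
\]
equivalently $q_1 < q_2 < \cdots < q_r$.

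First I would treat planarity. Since each block is a singleton or joins exactly one bottom vertex to exactly one top vertex, the diagram is a disjoint union of $r$ strands, the $i$-th running from $p_i$ on the bottom edge of the rectangle to $q_i'$ on the top edge. The cleanest route is to invoke \cref{priffprop}: a rook diagram is planar exactly when it is a tensor product of single-block rook diagrams, and reading such a tensor product from left to right forces the surviving strands to connect bottom to top vertices in the same order, i.e.\ forces $p_i < p_j \implies q_i < q_j$; conversely any order-preserving pairing is manifestly assembled as such a tensor product. Hence $D$ is planar if and only if the monotonicity condition holds.

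Next I would treat the matrix side. The nonzero entries of $[D]$ occupy the positions $(q_i, p_i)$, and since $[D]$ is a rook matrix each nonzero row has its unique $1$ as its leading entry. By \cref{pseudoechelonform}, $[D]$ is in pseudo-echelon form precisely when, for rows $q_i$ above $q_j$, the leading column lies strictly to the left, that is $q_i < q_j \implies p_i < p_j$. As the $p_i$ increase with $i$, this is the converse implication to monotonicity; because all $p_i$ are distinct and all $q_i$ are distinct, an implication and its converse are here equivalent. Indeed, if $q_i < q_j$ forced $p_i > p_j$ one could interchange the roles of $i$ and $j$ to contradict monotonicity, and symmetrically in the other direction, so both are equivalent to $q_1 < \cdots < q_r$. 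Combining the two paragraphs yields that $D$ is planar if and only if $[D]$ is in pseudo-echelon form.

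The step demanding the most care is the geometric ``non-crossing $\iff$ order-preserving'' equivalence: asserting that an inverted pair of strands is forced to cross requires a genuine, if elementary, topological argument rather than a picture. This is exactly why I would route planarity through \cref{priffprop}, which packages that topological fact algebraically; once it is in hand, everything that remains is the bookkeeping translation of planarity and of pseudo-echelon form into the common condition $q_1 < \cdots < q_r$.
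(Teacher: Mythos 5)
Your proposal is correct and takes essentially the same approach as the paper: both proofs translate planarity of $D$ and the pseudo-echelon condition on $[D]$ into the same order-preservation condition on the matching between the non-isolated bottom vertices $p_i$ and top vertices $q_i'$, and then observe that the two translations coincide. The only difference is one of care rather than strategy: the paper asserts the equivalence ``$D$ planar $\iff$ the pairing is order-preserving'' without justification, whereas you ground that step in \cref{priffprop}, which is exactly the right way to make it rigorous.
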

\begin{proof}
  Let $D\colon k\to l$ be a rook diagram with $r$ size two blocks. Suppose the non-isolated vertices in the bottom row of $D$ are labeled by $p_1<\cdots<p_r$, and the non-isolated vertices in the top row of $D$ are labeled by $q'_1<\cdots<q'_r$. Then all the nonzero entries of $[D]$ are in the positions $\{(q_i,p_i)\ |\ 1\leq i\leq r\}$. The rook diagram $D$ is planar if and only if
  \begin{equation}\label{planarechelon1}
    p_i<p_j\Leftrightarrow q'_i<q'_j;
  \end{equation}
  the rook matrix $[D]$ is in pseudo-echelon form if and only if
  \begin{equation}\label{planarechelon2}
    q_i<q_j\Leftrightarrow p_i<p_j,
  \end{equation}
  The two conditions \cref{planarechelon1} and \cref{planarechelon2} are the same. Therefore, the rook diagram $D$ is planar if and only if the rook matrix $[D]$ is in pseudo-echelon form.
\end{proof}

\begin{prop}\label{rookmatrixsp}
  Every $[D]\in R_{l\times k}$ can be changed to a pseudo-echelon form by permuting the nonzero rows, which is also saying that $[D]$ has a decomposition $[D]=[S][P]$, where $[S]$ is a permutation matrix, and $[P]$ is a rook matrix in pseudo-echelon form. Moreover, the rook matrix $[P]$ has the same zero rows and columns as $[D]$.
\end{prop}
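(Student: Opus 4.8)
The plan is to realize the decomposition as a sorting procedure on the nonzero rows of $[D]$. First I would record the data of $[D]$ explicitly: since $[D]$ is a rook matrix, its nonzero entries sit in distinct rows and distinct columns, so I can write them as $(a_1,b_1),\ldots,(a_r,b_r)$ with $a_1<a_2<\cdots<a_r$ the nonzero-row indices and $b_1,\ldots,b_r$ the distinct (but possibly unsorted) columns carrying the corresponding leading entries. With this notation, and reading \cref{pseudoechelonform}, the matrix $[D]$ is in pseudo-echelon form precisely when $b_1<b_2<\cdots<b_r$; in general these columns are out of order, and the job is to reorder the rows to fix this.

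Next I would sort the columns. Let $\sigma$ be the unique permutation of $\{1,\ldots,r\}$ with $b_{\sigma(1)}<b_{\sigma(2)}<\cdots<b_{\sigma(r)}$, and define $[P]$ to be the rook matrix whose nonzero entries are $(a_i,b_{\sigma(i)})$ for $1\le i\le r$. Because $a_1<\cdots<a_r$ and $b_{\sigma(1)}<\cdots<b_{\sigma(r)}$, the leading entry of each nonzero row of $[P]$ lies strictly to the right of the one above it, so $[P]$ is in pseudo-echelon form. By construction $[P]$ has nonzero entries in exactly the rows $\{a_1,\ldots,a_r\}$ and the columns $\{b_1,\ldots,b_r\}$, i.e.\ the same nonzero (hence the same zero) rows and columns as $[D]$, which is the ``moreover'' claim.

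Finally I would produce the permutation matrix. Define $\pi$ on $\{1,\ldots,l\}$ by $\pi(a_i)=a_{\sigma(i)}$ for $1\le i\le r$ and $\pi(j)=j$ for every zero-row index $j\notin\{a_1,\ldots,a_r\}$; let $[S]$ be its permutation matrix. Left multiplication by $[S]$ moves row $a_i$ of $[P]$ to row $a_{\sigma(i)}$, so the nonzero entry of $[P]$ in column $b_{\sigma(i)}$ lands in row $a_{\sigma(i)}$; comparing with the entries $(a_{\sigma(i)},b_{\sigma(i)})$ of $[D]$ shows $[S][P]=[D]$, as desired.

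The computation is entirely bookkeeping; the only point requiring care is checking that $\pi$ is well defined as a permutation of all $l$ rows --- that sorting the nonzero rows does not disturb the set of zero-row positions --- which is exactly the observation that $\{a_1,\ldots,a_r\}$ is fixed setwise by the assignment $a_i\mapsto a_{\sigma(i)}$ (since $\sigma$ permutes the indices $\{1,\ldots,r\}$). I would also pin down the convention that ``$[S]$ acts by left multiplication permuting rows'' so that the index chase in the last step is unambiguous.
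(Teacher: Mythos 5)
Your proof is correct and takes essentially the same approach as the paper's: both arguments sort the nonzero rows of $[D]$ so that the columns of the leading entries increase, note that this sorting preserves the zero rows and columns, and realize it as left multiplication by a permutation matrix. The only difference is cosmetic --- you construct $[P]$ first and exhibit the explicit permutation carrying it back to $[D]$, while the paper permutes the rows of $[D]$ to produce $[P]$; these are inverse descriptions of the same bookkeeping.
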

\begin{proof}
  Suppose all the nonzero entries in $[D]$ are in the positions $\{(q_i,p_{h_i})\ |\  1\leq i\leq r\}$. By changing the row $q_i$ to the row $q_{h_i}$ for all $i$, we get a rook matrix $[P]$ whose nonzero entries are in the positions $\{(q_i,p_i)\ |\ 1\leq i\leq r\}$. Moreover, we have not changed the zero rows and zero columns of $[D]$. Therefore, the rook matrix $[P]$ is in pseudo-echelon form, and it has the same zero rows and columns as $[D]$. By noticing that permuting the rows corresponds to multiplying on the left by a permutation matrix, we can finish the proof of this lemma.
\end{proof}
For example, the rook matrix
\[
  [D]\ =\ 
  \begin{bmatrix}
    0 & 1 & 0 & 0 & 0 \\
    0 & 0 & 0 & 0 & 1 \\
    0 & 0 & 0 & 0 & 0 \\
    0 & 0 & 1 & 0 & 0 \\
  \end{bmatrix}
\]
can be changed to a rook matrix in pseudo-echelon form
\[
  [P]\ =\ 
  \begin{bmatrix}
    0 & 1 & 0 & 0 & 0 \\
    0 & 0 & 1 & 0 & 0 \\
    0 & 0 & 0 & 0 & 0 \\
    0 & 0 & 0 & 0 & 1 \\
  \end{bmatrix}
\]
by permuting the $2$-th row ($q_2$-th row) and the $4$-th row (the $q_3$-th row).

Let $A^t$ denote the transpose of a matrix $A$. It is easy to see that the transposes of a rook matrix, a permutation matrix, and a rook matrix in pseudo-echelon form are still a rook matrix, a permutation matrix, and a rook-matrix in pseudo-echelon form respectively. Moreover, the zero rows in $A$ are in one-to-one correspondences to the zero columns in $A^t$. Therefore, we have the following corollary.
\begin{cor}\label{rookmatrixps}
  Every rook matrix $[D]$ has a decomposition $[D]=[P][S]$, where $[S]$ is a permutation matrix, and $[P]$ is a matrix in pseudo-echelon form that has the same zero rows and columns as $[D]$.
\end{cor}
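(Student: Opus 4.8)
The plan is to deduce this from \cref{rookmatrixsp} by a transpose argument, using the observations recorded in the paragraph immediately preceding the statement: the transpose of a rook matrix (respectively, a permutation matrix, a rook matrix in pseudo-echelon form) is again a rook matrix (respectively, a permutation matrix, a rook matrix in pseudo-echelon form), and transposition interchanges the zero rows of a matrix with the zero columns of its transpose. All the structural content is already contained in \cref{rookmatrixsp}; the corollary is its ``reflected'' version.

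First I would set $A=[D]^t$. Since $[D]$ is a rook matrix, $A$ is again a rook matrix, so \cref{rookmatrixsp} applies and gives a decomposition $A=[S'][P']$, where $[S']$ is a permutation matrix and $[P']$ is a rook matrix in pseudo-echelon form having the same zero rows and columns as $A$.

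Next I would transpose this equation. Using $([D]^t)^t=[D]$ and $(XY)^t=Y^tX^t$, I obtain
\[
  [D]=A^t=([S'][P'])^t=[P']^t[S']^t.
\]
I then set $[P]:=[P']^t$ and $[S]:=[S']^t$. By the transpose observations, $[S]$ is a permutation matrix and $[P]$ is a rook matrix in pseudo-echelon form, which yields the desired factorization $[D]=[P][S]$.

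Finally I would check the claim about zero rows and columns, which is the only point requiring a little care. Since $[P']$ has the same zero rows and columns as $A=[D]^t$, transposition turns the zero rows of $[P']$ into the zero columns of $[P]$ and the zero columns of $[P']$ into the zero rows of $[P]$; the same interchange relates the zero rows and columns of $[D]^t$ to those of $[D]$. Matching these up shows that $[P]$ has exactly the same zero rows and columns as $[D]$. I expect no genuine obstacle here beyond this bookkeeping of zero rows and columns under transposition.
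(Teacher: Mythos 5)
Your proof is correct and is essentially identical to the paper's own argument: both apply \cref{rookmatrixsp} to $[D]^t$, transpose the resulting factorization, and use the facts (recorded before the statement) that transposition preserves rook matrices, permutation matrices, and pseudo-echelon form while interchanging zero rows and zero columns. Your write-up just makes the bookkeeping of zero rows and columns slightly more explicit than the paper does.
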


\begin{proof}
  By \cref{rookmatrixsp}, we know that there exists a permutation matrix $[S]$ and a rook matrix $[P]$ in pseudo-echelon form that has the same zero rows and columns as $[D]^t$ such that
  \[
    [D]^t=[S][P].
  \]
  Take transpose of both sides of the above equality, we get
  \[
    [D]=([D]^t)^t=[P]^t[S]^t,
  \]
  which finishes the proof since $[S]^t$ is a permutation matrix and $[P]^t$ is a rook matrix in pseudo-echelon form that has the same zero rows and columns as $[D]$.
\end{proof}

Note that the rook matrices in pseudo-echelon forms appeared in \cref{rookmatrixsp} and \cref{rookmatrixps} are the same because a rook matrix in pseudo-echelon form is completely determined by its zero rows and columns. (See the similar argument for planar rook diagrams given in the beginning of \cref{prcategorysection}.) Furthermore, there are very nice correspondences between rook diagrams and rook matrices given in the following table. 
\begin{center}
\begin{tabular}{c|c}
  \hline
  \hline
   rook diagrams & rook matrices \\
  \hline
   permutation diagrams & permutations matrices \\
  \hline
   planar rook diagrams & rook matrices in pseudo-echelon form \\
  \hline
   flip in the horizontal axis (involution $^*$) & transpose \\
  \hline
  \hline
\end{tabular}
\end{center}
Therefore, as a corollary of \cref{rookmatrixsp}, \cref{rookmatrixps}, and the above table, we have that every rook diagram $D$ has decompositions $D=S\circ P$ and $D=P\circ S'$ where $S,S'$ are permutation diagrams, and $P$ is a planar rook diagram that is of the same type of $D$, and that has the same isolated vertices as $D$. This also provides an alternative proof for \cref{decompose2}\cref{decompose2a} and \cref{decompose2cor}\cref{decompose2cora}.

\subsection{The rook category $\cR(t)$}
\begin{defin}\label{rookcategory} 
  Fix $t\in\kk$. We define the \emph{rook category} $\cR(t)$ to be the strict $\kk$-linear monoidal category generated by the object $1$ and morphisms
  \begin{equation}\label{rookgenerator}
     s =
   \begin{tikzpicture}[anchorbase]
     \pd{0,0};
     \pd{0.7,0};
     \pd{0,0.7};
     \pd{0.7,0.7};
     \draw (0,0) \braidto (0.7,0.7);
     \draw (0.7,0) \braidto (0,0.7);
   \end{tikzpicture}
   \colon 2 \to 2,\quad
   \eta =
   \begin{tikzpicture}[anchorbase]
     \pd{0,0.5};
     \draw(0,0.25) to (0,0.5);
   \end{tikzpicture}
   \ \colon 0 \to 1,\quad
   \varepsilon =
   \begin{tikzpicture}[anchorbase]
     \pd{0,0};
     \draw (0,0.25) to (0,0);
   \end{tikzpicture}
   \ \colon 1 \to 0,
  \end{equation}
  subject to the following relations and their transforms under $^*$ and $^{\sharp}$:
  \begin{equation} \label{R1}
   \begin{tikzpicture}[anchorbase]
     \pd{0,0};
     \pd{0.7,0};
     \pd{0,0.7};
     \pd{0.7,0.7};
     \pd{0,1.4};
     \pd{0.7,1.4};
     \draw (0,0) \braidto (0.7,0.7) \braidto (0,1.4);
     \draw (0.7,0) \braidto (0,0.7) \braidto (0.7,1.4);
   \end{tikzpicture}
   \ =\
   \begin{tikzpicture}[anchorbase]
     \pd{0,0};
     \pd{0.7,0};
     \pd{0,0.7};
     \pd{0.7,0.7};
     \draw (0,0) to (0,0.7);
     \draw (0.7,0) to (0.7,0.7);
   \end{tikzpicture}
   \ ,\qquad
   \begin{tikzpicture}[anchorbase]
     \pd{0,0};
     \pd{0.7,0};
     \pd{1.4,0};
     \pd{0,0.7};
     \pd{0.7,0.7};
     \pd{1.4,0.7};
     \pd{0,1.4};
     \pd{0.7,1.4};
     \pd{1.4,1.4};
     \pd{0,2.1};
     \pd{0.7,2.1};
     \pd{1.4,2.1};
     \draw (0,0) to (0,0.7) \braidto (0.7,1.4) \braidto (1.4,2.1);
     \draw (0.7,0) \braidto (1.4,0.7) to (1.4,1.4) \braidto (0.7,2.1);
     \draw (1.4,0) \braidto (0.7,0.7) \braidto (0,1.4) to (0,2.1);
   \end{tikzpicture}
   \ =\
   \begin{tikzpicture}[anchorbase]
     \pd{0,0};
     \pd{0.7,0};
     \pd{1.4,0};
     \pd{0,0.7};
     \pd{0.7,0.7};
     \pd{1.4,0.7};
     \pd{0,1.4};
     \pd{0.7,1.4};
     \pd{1.4,1.4};
     \pd{0,2.1};
     \pd{0.7,2.1};
     \pd{1.4,2.1};
     \draw (0,0) \braidto (0.7,0.7) \braidto (1.4,1.4) to (1.4,2.1);
     \draw (0.7,0) \braidto (0,0.7) to (0,1.4) \braidto (0.7,2.1);
     \draw (1.4,0) to (1.4,0.7) \braidto (0.7,1.4) \braidto (0,2.1);
   \end{tikzpicture}
   \ ,
 \end{equation}
 \begin{equation}\label{R2}
   \begin{tikzpicture}[anchorbase]
     \pd{0,0.7};
     \pd{0,1.4};
     \pd{0.7,0};
     \pd{0.7,0.7};
     \pd{0.7,1.4};
     \draw (0,0.45) to (0,0.7) \braidto (0.7,1.4);
     \draw (0.7,0) to (0.7,0.7) \braidto (0,1.4);
   \end{tikzpicture}
   \ =\
   \begin{tikzpicture}[anchorbase]
     \pd{0,0};
     \pd{0,0.7};
     \pd{0.7,0.7};
     \draw (0,0) to (0,0.7);
     \draw (0.7,0.45) to (0.7,0.7);
   \end{tikzpicture}
   \ ,
 \end{equation}
 \begin{equation}\label{R3}
   \begin{tikzpicture}[anchorbase]
     \pd{0,0};
     \draw (0,-0.2) to (0,0.2);
   \end{tikzpicture}
   = t 1_0.
 \end{equation}
\end{defin}

\begin{prop}\label{rookmorerealtions}
  The following relation and its transform under $^*$ holds in the rook category $\cR(t)$.
  \begin{equation}\label{R4}
    \begin{tikzpicture}[anchorbase]
      \pd{0,1.4};\pd{0.7,1.4};
      \pd{0,0.7};\pd{0.7,0.7};
      \draw (0,0.45) to (0,0.7) \braidto (0.7,1.4);
      \draw (0.7,0.45) to (0.7,0.7) \braidto (0,1.4);
    \end{tikzpicture}
    \ =\
    \begin{tikzpicture}[anchorbase]
      \pd{0,0.7};\pd{0.7,0.7};
      \draw (0,0.45) to (0,0.7);
      \draw (0.7,0.45) to (0.7,0.7);
    \end{tikzpicture}
    \ .\qedhere
  \end{equation}
\end{prop}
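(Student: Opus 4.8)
The plan is to derive \eqref{R4} directly from the slide relation \eqref{R2}, which is already available as one of the defining relations of $\cR(t)$. Reading string diagrams from bottom to top, \eqref{R2} asserts that a strand created by $\eta$ can be pulled through the crossing $s$; in algebraic notation it reads $s\circ(\eta\otimes 1_1)=1_1\otimes\eta$ as morphisms $1\to 2$. In the same notation, the left-hand side of \eqref{R4} is the morphism $s\circ(\eta\otimes\eta)\colon 0\to 2$ obtained by creating two isolated source vertices and then crossing the resulting strands, and the right-hand side is simply $\eta\otimes\eta$.

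First I would rewrite $\eta\otimes\eta$ so that the subword appearing on the left of \eqref{R2} becomes visible. Using bifunctoriality of the tensor product (the interchange law \eqref{interchangelaw}) together with \eqref{ax4}, one has the factorisation $\eta\otimes\eta=(\eta\otimes 1_1)\circ\eta$, where the inner $\eta\colon 0\to 1$ produces the right-hand strand and $\eta\otimes 1_1\colon 1\to 2$ adjoins the left-hand strand. Substituting this into the left-hand side of \eqref{R4} and using associativity of composition gives
\[
  s\circ(\eta\otimes\eta)=s\circ(\eta\otimes 1_1)\circ\eta.
\]
The key step is then to apply \eqref{R2} to the subword $s\circ(\eta\otimes 1_1)$, replacing it by $1_1\otimes\eta$. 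This yields $s\circ(\eta\otimes\eta)=(1_1\otimes\eta)\circ\eta$, and the same interchange-law bookkeeping as before identifies $(1_1\otimes\eta)\circ\eta$ with $\eta\otimes\eta$, completing the derivation. Diagrammatically this is nothing more than sliding one of the two source vertices through the crossing, which straightens the picture into two uncrossed source vertices.

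Finally, the transform of \eqref{R4} under $^*$, namely $(\varepsilon\otimes\varepsilon)\circ s=\varepsilon\otimes\varepsilon$, needs no separate argument: since $^*\colon\cR(t)\to\cR(t)^{\op}$ is a contravariant monoidal involution that fixes $s$ and exchanges $\eta$ with $\varepsilon$, applying it to the identity \eqref{R4} just proved produces exactly the claimed transform; equivalently, one repeats the three lines above with \eqref{R2} replaced by its $^*$-transform $(\varepsilon\otimes 1_1)\circ s=1_1\otimes\varepsilon$. I do not expect any genuine obstacle here, since the whole content is a single invocation of \eqref{R2}; the only point that demands care is the tensor bookkeeping needed to expose, and then re-absorb, the auxiliary identity strand $1_1$.
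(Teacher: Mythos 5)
Your argument is correct and is essentially the paper's own proof: the paper's diagrammatic chain of equalities is exactly your algebraic factorisation $\eta\otimes\eta=(\eta\otimes 1_1)\circ\eta$ (via \cref{ax4} and the interchange law), followed by a single application of \cref{R2} to the subword $s\circ(\eta\otimes 1_1)$, followed by re-absorption of the auxiliary identity strand. Your handling of the $^*$-transform, either through the induced involution or by rerunning the argument with the $^*$-transform of \cref{R2}, also matches the paper's (implicit) treatment.
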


\begin{proof}
For \cref{R4},
  \[
    \begin{tikzpicture}[anchorbase]
      \pd{0,1.4};\pd{0.7,1.4};
      \pd{0,0.7};\pd{0.7,0.7};
      \draw (0,0.45) to (0,0.7) \braidto (0.7,1.4);
      \draw (0.7,0.45) to (0.7,0.7) \braidto (0,1.4);
    \end{tikzpicture}
    \ \overset{\cref{ax4}}{=\joinrel=}\
    \begin{tikzpicture}[anchorbase]
      \pd{0,1.4};\pd{0.7,1.4};
      \pd{0,0.7};\pd{0.7,0.7};
      \pd{0.7,0};
      \draw (0,0.45) to (0,0.7) \braidto (0.7,1.4);
      \draw (0.7,-0.25) to (0.7,0) to (0.7,0.7) \braidto (0,1.4);
    \end{tikzpicture}
    \ \overset{\cref{R2}}{=\joinrel=}\ 
    \begin{tikzpicture}[anchorbase]
      \pd{0,0.7};\pd{0.7,0.7};
      \pd{0,0};
      \draw (0,-0.15) to (0,0) to (0,0.7);
      \draw (0.7,0.45) to (0.7,0.7);
    \end{tikzpicture}
    \ \overset{\cref{ax4}}{=\joinrel=}\
    \begin{tikzpicture}[anchorbase]
      \pd{0,0.7};\pd{0.7,0.7};
      \draw (0,0.45) to (0,0.7);
      \draw (0.7,0.45) to (0.7,0.7);
    \end{tikzpicture}
    \ .\qedhere
  \]
\end{proof}

We will prove that morphism spaces of the rook category $\cR(t)$ have linear bases given by all rook diagrams.

\begin{prop}\label{rookin}
  Every rook diagram can be built by compositions and tensor products of the generators $\{s,\eta,\varepsilon\}$ given in \cref{rookgenerator}.
\end{prop}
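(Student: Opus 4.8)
The plan is to reduce the claim to two facts that are already available: that planar rook diagrams are generated by $\eta$ and $\varepsilon$, and that permutation diagrams are generated by the single crossing $s$. The bridge between these and a general rook diagram is the factorization established earlier.

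First I would invoke \cref{decompose2}\cref{decompose2a}, which writes an arbitrary rook diagram $D$ as $D=S\circ P$, where $S$ is a permutation diagram and $P$ is a planar rook diagram. This splits the problem into building $P$ and building $S$ separately from the generators, after which the vertical composite recovers $D$. I would note that this decomposition uses only the axioms of a strict $\kk$-linear monoidal category together with the ability to slide an isolated vertex through a crossing, so it is legitimate inside $\cR(t)$ and does not secretly require generators (such as $\mu$ or $\delta$) that are absent here.

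For the planar factor $P$, I would apply \cref{prin} (equivalently the ``only if'' direction of \cref{priffprop}): every planar rook diagram is a tensor product of single-block rook diagrams, and the only single-block rook diagrams are the identity strand $1_1$, the unit $\eta$, and the counit $\varepsilon$. Hence $P$ is built from $\{\eta,\varepsilon\}$ and identity strands alone. For the permutation factor $S$ on $n$ strands, I would use the classical fact that the symmetric group $S_n$ is generated by the adjacent transpositions $s_i=(i\ \, i{+}1)$; translated into string diagrams, every permutation diagram is a vertical composite of elementary crossings of the form $1^{\otimes a}\otimes s\otimes 1^{\otimes b}$ with $a+b+2=n$, each of which is built from $s$ and identities.

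The main obstacle, such as it is, lies in the permutation factor: one must argue cleanly that any permutation diagram decomposes into the elementary crossings $1^{\otimes a}\otimes s\otimes 1^{\otimes b}$. This is the diagrammatic incarnation of ``simple transpositions generate $S_n$,'' and it is precisely the content underlying the isomorphism $\End_{\cS}(|^{\otimes n})\cong\kk S_n$ recorded after \cref{symmetricgroupcategory}; a short induction on the number of inversions of the underlying permutation makes it rigorous, with the interchange law \cref{interchangelaw} used to place each crossing at the desired position and height. Once $S$ and $P$ are each written in the generators, the composite $D=S\circ P$ is manifestly built from $\{s,\eta,\varepsilon\}$, which completes the argument.
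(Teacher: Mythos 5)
Your proof is correct and follows essentially the same route as the paper: both factor the rook diagram as $D=S\circ P$ via \cref{decompose2}\cref{decompose2a}, build the planar factor $P$ from $\{\eta,\varepsilon\}$ using \cref{prin}, and build the permutation factor $S$ from $s$ via the symmetric group category fact recorded at \cref{symmetricgroupcategory}. The extra details you supply (the induction on inversions and the remark that the decomposition is legitimate without $\mu$ or $\delta$) are sound elaborations of steps the paper leaves implicit.
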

\begin{proof}
  By \cref{symmetricgroupcategory}, every permutation diagram can be built by compositions and tensor products of the generator $s$. By \cref{prin}, every planar rook diagram can be built by a tensor product of the generators $\{\varepsilon,\eta\}$. By \cref{decompose2}\cref{decompose2a}, every rook diagram can be decomposed into a composition of a permutation diagram and a planar rook diagram; therefore, every rook diagram can be built by the generators $\{s,\eta,\varepsilon\}$.
\end{proof}

\begin{lem}\label{rookchange}
  Consider a diagram $S\circ P$, where $S$ is a permutation diagram, and $P$ is a planar rook diagram. Then there exists a permutation diagram $S'$ and a planar rook diagram $P'$ such that $S\circ P=P'\circ S'$.
\end{lem}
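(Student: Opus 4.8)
The plan is to reduce the lemma to the matrix factorization \cref{rookmatrixps} through the dictionary between rook diagrams and rook matrices from \cref{rookoriginsubsection}. First I would check that $S\circ P$ is itself a rook diagram carrying no power of $t$. Since $S$ is a permutation diagram, every vertex in its bottom row is an endpoint of a through-strand; hence in the stacked diagram $\begin{matrix}S\\P\end{matrix}$ every middle-row vertex is joined upward into $S$, so no connected component lies entirely in the middle row. Thus $\alpha(S,P)=0$ and $S\circ P=S\star P$ is an honest rook diagram.

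With this in hand, the correspondence $D\mapsto[D]$ records composition faithfully as matrix multiplication precisely because $\alpha(S,P)=0$, so $[S\circ P]=[S][P]$; equivalently, left multiplication by the permutation matrix $[S]$ merely permutes the rows of the pseudo-echelon matrix $[P]$, again producing a rook matrix. By \cref{rookmatrixps}, this rook matrix factors as $[S][P]=[P'][S']$ with $[S']$ a permutation matrix and $[P']$ a rook matrix in pseudo-echelon form. Translating back via the correspondence table --- and using \cref{pseudoiffprop} to read the pseudo-echelon matrix $[P']$ as a planar rook diagram $P'$ and $[S']$ as a permutation diagram $S'$ --- I obtain $[P'\circ S']=[P'][S']=[S\circ P]$ (again $\alpha(P',S')=0$ since $S'$ is a permutation), and bijectivity of $D\mapsto[D]$ forces $S\circ P=P'\circ S'$.

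The only delicate point, and the step I would write out most carefully, is the bookkeeping ensuring $\alpha(S,P)=\alpha(P',S')=0$: this is what makes diagram composition coincide exactly with matrix multiplication and thereby lets the clean factorization of \cref{rookmatrixps} be pulled back to an honest equality of diagrams rather than an equality up to a power of $t$. An equivalent and even shorter route bypasses matrices entirely: once $S\circ P$ is known to be a rook diagram, one may simply apply \cref{decompose2cor}\cref{decompose2cora}, which already asserts that every rook diagram decomposes as a planar rook diagram followed by a permutation diagram.
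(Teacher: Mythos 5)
Your diagram-level combinatorics is correct: $\alpha(S,P)=0$ because every middle-row vertex of the stacked diagram is joined upward into the permutation $S$, the map \cref{matrixmap} turns $\star$-composition of rook diagrams into matrix multiplication, and \cref{rookmatrixps} together with \cref{pseudoiffprop} and bijectivity of $D\mapsto[D]$ produces diagrams $P'$ and $S'$ with $S\star P=P'\star S'$. Indeed, the paper itself observes at the end of \cref{rookoriginsubsection} that the matrix factorizations give an alternative proof of \cref{decompose2}\cref{decompose2a} and \cref{decompose2cor}\cref{decompose2cora}, so your main route is one the author anticipated. The gap lies in \emph{where} the resulting equality lives. \cref{rookchange} is invoked in the proof of \cref{rookcompose} to rewrite morphisms of the presented category $\cR(t)$, on the way to \cref{rookcomb} and \cref{rookbasistheorem}; so the real content of the lemma is that $S\circ P=P'\circ S'$ is \emph{derivable from the defining relations} of \cref{rookcategory}. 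What you have proved is that $S\circ P$ and $P'\circ S'$ have the same image under the functor $G\colon\cR(t)\to\Par(t)$ of \cref{rookindependent}. To upgrade that to an equality in $\cR(t)$ you would need $G$ to be faithful, but faithfulness is a consequence of \cref{rookbasistheorem} --- the very theorem this lemma is feeding into --- so the argument becomes circular.

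This is exactly what the paper's two-sentence proof is engineered to avoid: relation \cref{R2} (with its transforms under $^*$ and $^{\sharp}$), i.e.\ the fact that $\eta$ and $\varepsilon$ slide through the crossing $s$, is what rewrites $S\circ P$ \emph{inside} $\cR(t)$ as a single rook diagram; then \cref{decompose2cor}\cref{decompose2cora} is applied, and it is applicable inside $\cR(t)$ because, as the remark following \cref{decompose2} stresses, that decomposition uses only the monoidal axioms and the $\eta$-through-crossing relation, all of which hold in $\cR(t)$. Your closing ``shorter route'' is the paper's argument in outline, but as you present it, it inherits the same defect: you justify ``$S\circ P$ is a rook diagram'' by the count $\alpha(S,P)=0$, which is a statement about partition diagrams, not a derivation from the relations of $\cR(t)$. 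The repair is to replace the matrix bookkeeping by relation-level bookkeeping: use \cref{R2} and the interchange law \cref{interchangelaw} to pull each $\eta$ occurring at the top of $P$ up through the crossings of $S$, note that the resulting expression realizes a rook diagram in $\cR(t)$, and then cite \cref{decompose2cor}\cref{decompose2cora} in its relation-level form.
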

\begin{proof}
  The diagram $S\circ P$ is a rook diagram due to relation \cref{R2}. Then by \cref{decompose2cor}\cref{decompose2cora}, there exists a permutation diagram $S'$ and a planar rook diagram $P'$ such that $S\circ P=P'\circ S'$. 
\end{proof}

\begin{lem}\label{rookcompose}
  Let $D\colon l\to m$ and $D'\colon k\to l$ be two rook diagrams. Then the diagram $D'':= D\circ D'$ is equal to a power of $t$ times a rook diagram.
\end{lem}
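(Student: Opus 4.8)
The plan is to mimic the structure already used for planar rook diagrams in \cref{prcompose}, but to isolate the single place where a power of $t$ can be created. The guiding principle is that a factor of $t$ arises exactly when two size-two blocks close off into a connected component lying entirely in the middle row, and this can only happen when composing two \emph{planar} rook diagrams; every other composition appearing below — a permutation with a permutation, or a permutation with a planar rook — produces a genuine rook diagram with no extra scalar. So the strategy is to shuttle all the permutation factors to one side, collapse them into a single permutation, invoke \cref{prcompose} exactly once on the remaining planar rook part, and recognize what is left as a rook diagram.

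Concretely, I would first decompose the two factors using the results of the previous section. By \cref{decompose2cor}\cref{decompose2cora} write $D=P_1\circ S_1$ with $P_1$ a planar rook diagram and $S_1$ a permutation diagram, and by \cref{decompose2}\cref{decompose2a} write $D'=S_2\circ P_2$ with $S_2$ a permutation diagram and $P_2$ a planar rook diagram. Then
\[
  D''=D\circ D'=P_1\circ S_1\circ S_2\circ P_2.
\]
The composite $S_1\circ S_2$ is again a single permutation diagram $S$, since by \cref{symmetricgroupcategory} permutation diagrams are closed under composition and their composition produces no power of $t$. This gives $D''=P_1\circ S\circ P_2$.

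Next I would move the permutation past the planar rook factor. Applying \cref{rookchange} to $S\circ P_2$ yields a planar rook diagram $P'$ and a permutation diagram $S'$ with $S\circ P_2=P'\circ S'$, again with no scalar appearing. Hence $D''=P_1\circ P'\circ S'$. Now the two planar rook diagrams $P_1$ and $P'$ are adjacent, so \cref{prcompose} applies and gives $P_1\circ P'=t^{\alpha}P$ for some nonnegative integer $\alpha$ and some planar rook diagram $P$, so that $D''=t^{\alpha}\,P\circ S'$. Finally, $P\circ S'$ is the composition of a planar rook diagram with a permutation diagram, which is itself a rook diagram: applying the involution ${}^*$ turns it into $(S')^*\circ P^*$, a permutation composed with a planar rook, which is a rook diagram by relation \cref{R2} exactly as in the proof of \cref{rookchange}, and ${}^*$ preserves the class of rook diagrams. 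Therefore $D''=t^{\alpha}$ times a rook diagram, as claimed.

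The only delicate point is the bookkeeping of where the power of $t$ can enter: I must check that the permutation-collapsing step and the \cref{rookchange} step genuinely output diagrams rather than scalar multiples of diagrams, so that the unique source of the scalar is the single invocation of \cref{prcompose}. This is guaranteed because none of those intermediate compositions can form a closed loop in the middle row — a permutation diagram contributes no cups or caps — so the exponent $\alpha(-,-)$ from the partition composition rule is $0$ at each of those stages, and no extra factor of $t$ is generated there.
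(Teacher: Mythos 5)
Your proof is correct, and it uses exactly the paper's toolkit --- \cref{decompose2}\cref{decompose2a}, \cref{decompose2cor}\cref{decompose2cora}, \cref{prcompose}, \cref{rookchange}, closure of permutation diagrams under composition via \cref{symmetricgroupcategory}, and relation \cref{R2} --- just in mirror image: the paper decomposes $D=S\circ P$ and $D'=P'\circ S'$ so that the two \emph{planar} factors meet in the middle (so it applies \cref{prcompose} first and then shuffles a permutation past a planar rook), whereas you decompose $D=P_1\circ S_1$ and $D'=S_2\circ P_2$ so that the two \emph{permutations} meet in the middle (collapse them, then \cref{rookchange}, then \cref{prcompose}). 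One small point in favour of your ordering: you invoke \cref{rookchange} in the direction in which it is actually stated, $S\circ P_2=P'\circ S'$, while the paper's chain silently uses the reversed form $P''\circ S'=S''\circ P'''$, which strictly speaking needs the same involution-${}^*$ argument that you spell out explicitly for your final step $P\circ S'$; your closing remark that no scalar can appear outside the single planar-times-planar composition is also the correct justification.
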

\begin{proof}
  By \cref{decompose2}\cref{decompose2a} and \cref{decompose2cor}\cref{decompose2cora}, the rook diagrams $D$ and $D'$ have decompositions
  \[
    D=S\circ P,\quad D'=P'\circ S',
  \]
  where $S,S'$ are permutation diagrams, and $P,P'$ are planar rook diagrams. Then we have
  \begin{align*}
  D'' &= D\circ D' \\
      &= S\circ P\circ P'\circ S' \\
      &= S\circ (t^{\alpha}\ P'')\circ S' & \text{Lemma~\ref{prcompose}}\\
      &= t^{\alpha}\ (S\circ P''\circ S') \\
      &= t^{\alpha}\ (S\circ S''\circ P'') & \text{Lemma~\ref{rookchange}}\\
      &= t^{\alpha}\ (S'''\circ P'') & \text{Proposition~\ref{symmetricgroupcategory}}
  \end{align*}
  In the above equalities, $P''$ is a planar rook diagram, and $S'', S'''$ are permutation diagrams. As in the proof of \cref{rookchange}, $D''':=S'''\circ P''$ is a rook diagram. Thus we have shown that the diagram $D''$ is a power of $t$ times a rook diagram.
\end{proof}

\begin{prop}\label{rookcomb}
  Every morphism in $\Hom_{\cR(t)}(k,l)$ is a $\kk$-linear combination of rook diagrams of type $\binom{l}{k}$.
\end{prop}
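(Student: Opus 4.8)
The plan is to mirror the proof of \cref{prcomb} almost verbatim, substituting \cref{rookcompose} (composition of rook diagrams) for the role played there by \cref{prcompose} (composition of planar rook diagrams). First I would unwind the definition of $\cR(t)$: by construction every morphism $f\in\Hom_{\cR(t)}(k,l)$ is a finite $\kk$-linear combination $f=\sum_{i=1}^{n}a_iD_i$ with $a_i\in\kk$, where each $D_i$ is obtained from the generators $\{s,\eta,\varepsilon\}$ (together with identity strands) by iterated composition and tensor product. Since composition and tensor product are $\kk$-bilinear, it suffices to treat a single such $D_i$ and show it equals a scalar times a rook diagram.

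Next I would write each $D_i$ as a vertical composite of ``layers,'' $D_i=D_i^{1}\circ\cdots\circ D_i^{m_i}$, where every $D_i^{j}$ is a tensor product of the generators and identity morphisms. The key observation is that each layer $D_i^{j}$ is itself a rook diagram: a tensored factor of $s$ contributes a crossing of two through-strands, a factor of $\eta$ an isolated top vertex, a factor of $\varepsilon$ an isolated bottom vertex, and an identity strand a vertical through-strand, so the whole layer is a partial bijection between bottom and top vertices with the remaining vertices isolated --- exactly a rook diagram. This is the rook analogue of the single-layer observation underlying \cref{rookin}.

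Then I would invoke \cref{rookcompose} a total of $m_i-1$ times, collapsing the composite of rook-diagram layers one adjacent pair at a time: each application turns a composite of two rook diagrams into $t^{\alpha}$ times a single rook diagram, so after finitely many steps $D_i=t^{\alpha_i}D_i'$ for some nonnegative integer $\alpha_i$ and some rook diagram $D_i'$ of type $\binom{l}{k}$. Substituting back yields $f=\sum_{i=1}^{n}a_it^{\alpha_i}D_i'$, a $\kk$-linear combination of rook diagrams of type $\binom{l}{k}$, as required.

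I expect no serious obstacle here, since the real content has already been front-loaded into \cref{rookcompose}, whose proof in turn rests on the decomposition results \cref{decompose2}\cref{decompose2a} and \cref{decompose2cor}\cref{decompose2cora} together with \cref{prcompose}. The only point requiring a moment's care is the claim that each individual layer is a rook diagram, but this is immediate from inspecting the three generators. Once that is granted, the repeated application of \cref{rookcompose} is purely mechanical, and the argument terminates because each $D_i$ is a finite composite of finitely many layers.
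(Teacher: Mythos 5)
Your proposal is correct and follows exactly the route the paper takes: the paper's proof of this proposition is literally ``analogous to the proof of \cref{prcomb}, using \cref{rookcompose},'' and your argument is precisely that analogy spelled out --- decompose each diagram into layers that are tensor products of generators and identities, note each layer is a rook diagram, and collapse the layers by repeated application of \cref{rookcompose}. Your explicit verification that each individual layer is a rook diagram is a slightly more careful rendering of a step the paper leaves implicit, but it is the same proof.
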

\begin{proof}
  The proof, which uses \cref{rookcompose}, is analogous to the proof of \cref{prcomb}.
\end{proof}

\begin{prop}\label{rookindependent}
  All the rook diagrams of type $\binom{l}{k}$ are linearly independent in the morphism space $\Hom_{\cR(t)}(k,l)$.
\end{prop}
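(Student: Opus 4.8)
The plan is to mimic the proof of \cref{prindependent}: construct a strict $\kk$-linear monoidal functor from $\cR(t)$ to the partition category $\Par(t)$, in which the relevant diagrams are already known to be linearly independent by construction, and then transport linear independence backwards along this functor.

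First I would define a strict $\kk$-linear monoidal functor $F\colon\cR(t)\to\Par(t)$ on generators by
\[
  1\mapsto 1,\quad s\mapsto s,\quad \eta\mapsto\eta,\quad \varepsilon\mapsto\varepsilon,
\]
using the fact that $s,\eta,\varepsilon$ all occur among the generating morphisms of $\Par(t)$ listed in \cref{partitioncategorypresentation}. The only thing that needs checking here is well-definedness, i.e.\ that the images under $F$ of the defining relations of $\cR(t)$ hold in $\Par(t)$. But relation \cref{R1} is exactly \cref{P2}, relation \cref{R2} is exactly \cref{P3}, and relation \cref{R3} is the third relation of \cref{P4}; moreover their transforms under $^*$ and $^{\sharp}$ hold in $\Par(t)$ because $^*$ and $^{\sharp}$ are involutions of $\Par(t)$ itself. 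Hence every defining relation of $\cR(t)$ is already imposed in $\Par(t)$, and $F$ is well-defined.

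Next I would observe that $F$ induces a $\kk$-module homomorphism
\[
  F_{k,l}\colon\Hom_{\cR(t)}(k,l)\to\Hom_{\Par(t)}(k,l)
\]
sending each rook diagram of type $\binom{l}{k}$ in $\cR(t)$ to the same diagram, now viewed as a partition diagram in $\Par(t)$. By construction of $\Par(t)$, the morphism space $\Hom_{\Par(t)}(k,l)$ consists of all formal $\kk$-linear combinations of partition diagrams of type $\binom{l}{k}$, so these partition diagrams form a $\kk$-basis; in particular the images of the distinct rook diagrams, being distinct partition diagrams, are linearly independent in $\Hom_{\Par(t)}(k,l)$.

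Finally I would invoke the same elementary linear-algebra fact used in \cref{prindependent}: if a $\kk$-linear map carries a family of elements to a linearly independent family, then the original family is linearly independent. Applying this to $F_{k,l}$ gives that all rook diagrams of type $\binom{l}{k}$ are linearly independent in $\Hom_{\cR(t)}(k,l)$, completing the proof. There is no real obstacle beyond the bookkeeping of matching relations across the two presentations; the one point that must not be skipped is verifying that \cref{R1,R2,R3} (and their $^*,^{\sharp}$ transforms) are genuinely consequences of \cref{P2,P3,P4}, which is what makes $F$ well-defined.
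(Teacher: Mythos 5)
Your proof is correct and takes essentially the same approach as the paper: the paper likewise defines a strict $\kk$-linear monoidal functor $G\colon\cR(t)\to\Par(t)$ by $1\mapsto 1$, $s\mapsto s$, $\eta\mapsto\eta$, $\varepsilon\mapsto\varepsilon$, checks well-definedness because \cref{R1,R2,R3} (with their $^*$ and $^{\sharp}$ transforms) hold in $\Par(t)$ among \cref{P2,P3,P4}, and then pulls back linear independence along the induced map on morphism spaces exactly as in \cref{prindependent}. Your extra observation that partition diagrams form a basis of $\Hom_{\Par(t)}(k,l)$ by construction, so that distinct rook diagrams have linearly independent images, is precisely the point the paper leaves implicit in the phrase ``analogous to the proof of \cref{prindependent}.''
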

\begin{proof}
  We can define a strict $\kk$-linear monoidal functor $G$ from the rook category $\cR(t)$ to the partition category $\Par(t)$ by 
  \[
  1\mapsto 1,\quad s\mapsto s,\quad \eta\mapsto\eta,\quad \varepsilon\mapsto\varepsilon.
  \]
  The functor $G$ is well-defined because relations \cref{R1} to \cref{R3} still hold in $\Par(t)$ (relations \cref{P2} to \cref{P4}). The proof is then analogous to the proof of \cref{prindependent}.
\end{proof}

\begin{theo}\label{rookbasistheorem}
  The morphism space $\Hom_{\cR(t)}(k,l)$ has a linear basis given by all rook diagrams of type $\binom{l}{k}$.
\end{theo}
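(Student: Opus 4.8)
The plan is to recognize that \cref{rookbasistheorem} is the exact analogue of \cref{prbasistheorem} for the rook category, and that all the substantive work has already been carried out in the two preceding propositions. To exhibit a linear basis for $\Hom_{\cR(t)}(k,l)$, I need exactly two ingredients: that the proposed set of rook diagrams spans, and that it is linearly independent.

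First, the spanning statement is precisely \cref{rookcomb}, which asserts that every morphism in $\Hom_{\cR(t)}(k,l)$ is a $\kk$-linear combination of rook diagrams of type $\binom{l}{k}$. This in turn rested on \cref{rookin}, which guarantees that rook diagrams are built from the generators $\{s,\eta,\varepsilon\}$, together with \cref{rookcompose}, which guarantees that the composite of two rook diagrams is a power of $t$ times a single rook diagram. Thus no morphism outside the $\kk$-span of rook diagrams can arise from the generating data.

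Second, the linear independence is exactly \cref{rookindependent}, proved by pushing the diagrams forward under the functor $G\colon\cR(t)\to\Par(t)$ and invoking that a $\kk$-linear map sending a family of elements to a linearly independent family must have had that family independent in the domain; here the images are the honestly distinct rook diagrams sitting inside $\Par(t)$, which are linearly independent there. Combining these two facts, the rook diagrams of type $\binom{l}{k}$ both span $\Hom_{\cR(t)}(k,l)$ and are linearly independent, hence form a linear basis.

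I do not anticipate any genuine obstacle: the proof is a one-line assembly of \cref{rookcomb} and \cref{rookindependent}, exactly parallel to the way \cref{prbasistheorem} was assembled from \cref{prin}, \cref{prcomb}, and \cref{prindependent}. The only conceptual point worth flagging is that all the difficulty was front-loaded into \cref{rookcompose} (closure of rook diagrams under composition up to a scalar) and into the existence of the comparison functor $G$; once those are in hand, the basis statement is immediate.
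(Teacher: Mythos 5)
Your proposal is correct and takes essentially the same approach as the paper, whose entire proof is the one-line assembly of \cref{rookin}, \cref{rookcomb}, and \cref{rookindependent}. The only cosmetic difference is that the paper cites \cref{rookin} as a standalone ingredient (it is what lets one regard rook diagrams as morphisms of $\cR(t)$ in the first place), whereas you fold it into \cref{rookcomb}, whose proof in the paper actually invokes only \cref{rookcompose}.
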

\begin{proof}
  The proof is a combination of \cref{rookin}, \cref{rookcomb}, and \cref{rookindependent}.
\end{proof}

\begin{cor}
  Fix $k\in\N$. The endomorphism algebra $\End_{\cR(t)}(k)$ is the \emph{rook algebra} $R_k(t)$ given in \cref{introduction}.
\end{cor}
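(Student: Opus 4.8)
The plan is to deduce this directly from \cref{rookbasistheorem} together with the functor $G\colon\cR(t)\to\Par(t)$ introduced in the proof of \cref{rookindependent}. The key observation is that $\End_{\cR(t)}(k)$ and $R_k(t)$ share ``the same'' basis of rook diagrams, and that the functor $G$ identifies the two multiplications.

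First I would recall that, by \cref{rookbasistheorem}, the algebra $\End_{\cR(t)}(k)=\Hom_{\cR(t)}(k,k)$ has a $\kk$-linear basis given by all rook diagrams of type $\binom{k}{k}$. On the other side, $\End_{\Par(t)}(k)$ is the partition algebra $P_k(t)$, and $R_k(t)$ is by definition the subalgebra of $P_k(t)$ spanned by all rook diagrams of type $\binom{k}{k}$.

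Next I would restrict the monoidal functor $G$ to endomorphism algebras, obtaining a $\kk$-linear map $G_{k,k}\colon\End_{\cR(t)}(k)\to\End_{\Par(t)}(k)=P_k(t)$ that sends each rook diagram to the same rook diagram in $\Par(t)$. Because $G$ is a functor, $G_{k,k}$ preserves composition and sends $1_k$ to $1_k$, so it is a homomorphism of $\kk$-algebras. By \cref{rookindependent} it is injective, since the images of the basis rook diagrams are linearly independent, and its image is precisely the span of the rook diagrams of type $\binom{k}{k}$ in $P_k(t)$, namely $R_k(t)$. Hence $G_{k,k}$ is an algebra isomorphism $\End_{\cR(t)}(k)\cong R_k(t)$.

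There is no real obstacle here; the only point worth checking is that composition in $\cR(t)$ really does correspond to the rook-diagram multiplication in $R_k(t)$. But this is automatic: $G$ is a monoidal functor, so it carries the vertical composition in $\cR(t)$ to the vertical composition in $\Par(t)$, and the latter is exactly the multiplication $D\circ D'=t^{\alpha(D,D')}D\star D'$ that defines the product in $P_k(t)$ and hence in its subalgebra $R_k(t)$. Thus the corollary follows immediately, paralleling the planar rook case.
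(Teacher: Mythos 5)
Your proof is correct and takes essentially the same route as the paper, which simply cites \cref{rookbasistheorem}: the basis theorem identifies $\End_{\cR(t)}(k)$ with the span of rook diagrams, and the compatibility of the multiplications is implicit there. Your explicit check that the functor $G$ restricts to an injective algebra homomorphism $G_{k,k}$ with image $R_k(t)$ merely spells out the details the paper leaves unstated.
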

\begin{proof} This is a corollary of \cref{rookbasistheorem}.
\end{proof}
\section{The rook-Brauer category $\RB(t)$ \label{rbcategorysection}}

We first discuss the Brauer category $\cB(t)$ following \cite[\S2]{Leh15}, then we define the rook-Brauer category $\RB(t)$.
\subsection{The Brauer category $\cB(t)$}
\begin{defin}\label{brauercategory}
  Fix $t\in\kk$. The Brauer category $\cB(t)$ is the subcategory of the partition category $\Par(t)$ that has the same objects as $\Par(t)$ and morphism space $\Hom_{\cB(t)}(k,l)$ consisting all formal $\kk$-linear combinations of Brauer diagrams of type $\binom{l}{k}$.
\end{defin}

Fix $k\in\N$. The endomorphism algebra $\End_{\cB(t)}(k)$ is the \emph{Brauer algebra} $B_k(t)$ given in \cref{introduction}. Moreover, the Brauer category $\cB(t)$ has a presentation given in the following theorem.

\begin{theo}[{\cite[Th.~2.6]{Leh15}}]\label{brauercategorypresentation}
  As a strict $\kk$-linear monoidal category, the Brauer category $\cB(t)$ is generated by the object $1$ and the morphisms
  \[
    s =
   \begin{tikzpicture}[anchorbase]
     \pd{0,0};
     \pd{0.7,0};
     \pd{0,0.7};
     \pd{0.7,0.7};
     \draw (0,0) \braidto (0.7,0.7);
     \draw (0.7,0) \braidto (0,0.7);
   \end{tikzpicture}
   \colon 2 \to 2,\quad
    d =
    \begin{tikzpicture}[anchorbase]
      \pd{0,0};
      \pd{0.7,0};
      \draw (0,0) to[out=up,in=up,looseness=2] (0.7,0);
    \end{tikzpicture}
   \colon 2 \to 0,\quad
    c =
    \begin{tikzpicture}[anchorbase]
      \pd{0,0};
      \pd{0.7,0};
      \draw (0,0) to[out=down,in=down,looseness=2] (0.7,0);
    \end{tikzpicture},
  \]
  subject to the following relations and their transforms under $^*$ and $^{\sharp}$:
  \begin{equation}\label{B1}
    \begin{tikzpicture}[anchorbase]
      \pd{0,0};\pd{0.7,0};
      \pd{0,0.7};\pd{0.7,0.7};
      \pd{0,1.4};\pd{0.7,1.4};
      \draw (0,0) \braidto (0.7,0.7) \braidto (0,1.4);
      \draw (0.7,0) \braidto (0,0.7) \braidto (0.7,1.4);
    \end{tikzpicture}
    \ =\
    \begin{tikzpicture}[anchorbase]
      \pd{0,0};\pd{0.7,0};
      \pd{0,0.7};\pd{0.7,0.7};
      \draw (0,0) to (0,0.7);
      \draw (0.7,0) to (0.7,0.7);
    \end{tikzpicture}
    \ ,\qquad
    \begin{tikzpicture}[anchorbase]
      \pd{0,0};\pd{0.7,0};\pd{1.4,0};
      \pd{0,0.7};\pd{0.7,0.7};\pd{1.4,0.7};
      \pd{0,1.4};\pd{0.7,1.4};\pd{1.4,1.4};
      \pd{0,2.1};\pd{0.7,2.1};\pd{1.4,2.1};
      \draw (0,0) to (0,0.7) \braidto (0.7,1.4) \braidto (1.4,2.1);
      \draw (0.7,0) \braidto (1.4,0.7) to (1.4,1.4) \braidto (0.7,2.1);
      \draw (1.4,0) \braidto (0.7,0.7) \braidto (0,1.4) to (0,2.1);
    \end{tikzpicture}
    \ =\
    \begin{tikzpicture}[anchorbase]
      \pd{0,0};\pd{0.7,0};\pd{1.4,0};
      \pd{0,0.7};\pd{0.7,0.7};\pd{1.4,0.7};
      \pd{0,1.4};\pd{0.7,1.4};\pd{1.4,1.4};
      \pd{0,2.1};\pd{0.7,2.1};\pd{1.4,2.1};
      \draw (0,0) \braidto (0.7,0.7) \braidto (1.4,1.4) to (1.4,2.1);
      \draw (0.7,0) \braidto (0,0.7) to (0,1.4) \braidto (0.7,2.1);
      \draw (1.4,0) to (1.4,0.7) \braidto (0.7,1.4) \braidto (0,2.1);
    \end{tikzpicture}
    \ ,
  \end{equation}
  \begin{equation}\label{B2}
    \begin{tikzpicture}[anchorbase]
      \pd{1.4,1.4};
      \pd{0,0.7};\pd{0.7,0.7};\pd{1.4,0.7};
      \pd{0,0};
      \draw (0,0.7) to[out=up,in=up,looseness=2] (0.7,0.7);
      \draw (0.7,0.7) to[out=down,in=down,looseness=2] (1.4,0.7);
      \draw (0,0) to (0,0.7);
      \draw (1.4,0.7) to (1.4,1.4);
    \end{tikzpicture}
    \ =\ 
    \begin{tikzpicture}[anchorbase]
     \pd{0,0.7};
     \pd{0,0};
     \draw (0,0) to (0,0.7);
    \end{tikzpicture}
    \ ,\qquad
    \begin{tikzpicture}[anchorbase]
      \pd{0,1.4};
      \pd{0,0.7};\pd{0.7,0.7};\pd{1.4,0.7};
      \pd{0,0};\pd{0.7,0};\pd{1.4,0};
      \draw (0,0) \braidto (0.7,0.7);
      \draw (0.7,0) \braidto (0,0.7) to (0,1.4);
      \draw (1.4,0) to (1.4,0.7);
      \draw (0.7,0.7) to[out=up,in=up,looseness=2] (1.4,0.7);
    \end{tikzpicture}
    \ =\ 
    \begin{tikzpicture}[anchorbase]
      \pd{1.4,1.4};
      \pd{0,0.7};\pd{0.7,0.7};\pd{1.4,0.7};
      \pd{0,0};\pd{0.7,0};\pd{1.4,0};
      \draw (0,0) to (0,0.7);
      \draw (0.7,0) \braidto (1.4,0.7) to (1.4,1.4);
      \draw (1.4,0) \braidto (0.7,0.7);
      \draw (0,0.7) to[out=up,in=up,looseness=2] (0.7,0.7);
    \end{tikzpicture}
    \ ,
  \end{equation}
  \begin{equation}\label{B3}
    \begin{tikzpicture}[anchorbase]
      \pd{0,0.7};\pd{0.7,0.7};
      \pd{0,0};\pd{0.7,0};
      \draw (0,0) \braidto (0.7,0.7);
      \draw (0.7,0) \braidto (0,0.7);
      \draw (0,0.7) to[out=up,in=up,looseness=2] (0.7,0.7);
    \end{tikzpicture}
    \ =\ 
    \begin{tikzpicture}[anchorbase]
      \pd{0,0};
      \pd{0.7,0};
      \draw (0,0) to[out=up,in=up,looseness=2] (0.7,0);
    \end{tikzpicture}
    \ ,\qquad
    \begin{tikzpicture}[anchorbase]
      \pd{0,0};\pd{0.7,0};
      \draw (0,0) to[out=up,in=up,looseness=2] (0.7,0);
      \draw (0,0) to[out=down,in=down,looseness=2] (0.7,0);
    \end{tikzpicture}
    \ =\ 
    t 1_0
    \ .
  \end{equation}
\end{theo}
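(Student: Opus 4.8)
The plan is to follow the same three-step strategy already used for $\PR(t)$ and $\cR(t)$, adapted to accommodate cups and caps. Since $\cB(t)$ is defined in \cref{brauercategory} as a diagram subcategory of $\Par(t)$, I read the theorem as the assertion that the abstractly presented category $\widetilde{\cB}(t)$ --- generated by the object $1$ and the morphisms $s,d,c$ subject to \cref{B1}--\cref{B3} and their transforms under $^*$ and $^{\sharp}$ --- is isomorphic to $\cB(t)$ via the functor sending each generator to the evident Brauer diagram. First I would define a strict $\kk$-linear monoidal functor $F\colon\widetilde{\cB}(t)\to\Par(t)$ on generators by $1\mapsto 1$, $s\mapsto s$, $d\mapsto d$, $c\mapsto c$, and check that it is well defined: each of \cref{B1}--\cref{B3} is either one of the partition relations (e.g.\ \cref{P2}) or is readily derived from \cref{P1}--\cref{P4}, so the relations are respected in $\Par(t)$, and the image of $F$ plainly lands in $\cB(t)$.

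The first substantive step (\emph{generation}) is to show that every Brauer diagram is a composite of tensor products of $s,d,c$, so that $F$ is full onto $\cB(t)$. A Brauer diagram $D\colon k\to l$ consists of some cups joining two bottom vertices, some caps joining two top vertices, and some through-strands; writing $p$ for the number of through-strands one has $k=2a+p$ and $l=2b+p$. Using tensor products of $c$ one creates the $b$ top caps, a word in the elementary crossings $\id^{\otimes i}\otimes s\otimes\id^{\otimes j}$ routes the strands into the correct matching, and tensor products of $d$ close off the $a$ bottom cups. This is the standard decomposition of a Brauer diagram into elementary pieces; it uses only the monoidal axioms together with the fact that $s$ generates all permutation diagrams (\cref{symmetricgroupcategory}).

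The second step (\emph{spanning}) is the analogue of \cref{prcompose} and \cref{rookcompose}: the composite of two Brauer diagrams equals a power of $t$ times a Brauer diagram, and this reduction can be carried out using only \cref{B1}--\cref{B3}. Here \cref{B1} resolves crossings (the symmetric-group relations), \cref{B2} straightens the zig-zags and slides caps and cups past strands, the first relation in \cref{B3} absorbs crossings into caps and cups, and the second relation in \cref{B3} deletes each internal loop at the cost of one factor of $t$. Iterating, any word in the generators reduces to $t^{\alpha}$ times a single Brauer diagram, so every morphism of $\widetilde{\cB}(t)$ is a $\kk$-linear combination of Brauer diagrams. Finally (\emph{independence}), because $F$ sends distinct Brauer diagrams to distinct partition diagrams --- which are linearly independent in $\Hom_{\Par(t)}$ as part of the partition-diagram basis --- the Brauer diagrams are linearly independent in each $\Hom_{\widetilde{\cB}(t)}(k,l)$, exactly as in \cref{prindependent} and \cref{rookindependent}.

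Combining the three steps shows that the Brauer diagrams of type $\binom{l}{k}$ form a basis of $\Hom_{\widetilde{\cB}(t)}(k,l)$ and that $F$ identifies this space with $\Hom_{\cB(t)}(k,l)$, which is the claim. The main obstacle is the spanning step: one must verify that \cref{B1}--\cref{B3} are \emph{sufficient} to straighten every configuration of crossings, cups, and caps and to peel off every internal loop with the correct power of $t$ --- that is, that the induced rewriting system terminates and is confluent. This coherence argument, rather than any single computation, is the real content, and it is exactly what \cite[Th.~2.6]{Leh15} supplies.
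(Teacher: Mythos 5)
The paper contains no proof of this theorem at all: it is imported verbatim from \cite[Th.~2.6]{Leh15} (hence the bracketed citation in the theorem header), so there is no internal argument to compare against. Your sketch is a sound reduction of the statement to the paper's standard three-step pattern (generation, spanning via reduction of composites, independence via the functor to $\Par(t)$), and since you correctly identify that the one genuinely hard ingredient --- the sufficiency (coherence) of relations \cref{B1}--\cref{B3} in the spanning step --- is exactly what \cite[Th.~2.6]{Leh15} supplies, your proposal ultimately rests on the same external result as the paper does.
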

Here we use $d$ to denote the cap map, which means ``destroy'', and $c$ to denote the cup map, which means ``create''.

\subsection{The rook-Brauer category $\RB(t)$}
We have defined rook-Brauer diagrams in \cref{introduction}. Now, let us define the rook-Brauer category $\RB(t)$.
\begin{defin}\label{rbcategory}
  Fix $t\in\kk$. We define the rook-Brauer category $\RB(t)$ to be the strict $\kk$-linear monoidal category generated by the object $1$ and the morphisms
  \begin{equation}\label{rbgenerator}
    \begin{split}
    s =
   \begin{tikzpicture}[anchorbase]
     \pd{0,0};
     \pd{0.7,0};
     \pd{0,0.7};
     \pd{0.7,0.7};
     \draw (0,0) \braidto (0.7,0.7);
     \draw (0.7,0) \braidto (0,0.7);
   \end{tikzpicture}
   \colon 2 \to 2,\quad
   & d =
    \begin{tikzpicture}[anchorbase]
      \pd{0,0};
      \pd{0.7,0};
      \draw (0,0) to[out=up,in=up,looseness=2] (0.7,0);
    \end{tikzpicture}
   \colon 2 \to 0,\quad
    c =
    \begin{tikzpicture}[anchorbase]
      \pd{0,0};
      \pd{0.7,0};
      \draw (0,0) to[out=down,in=down,looseness=2] (0.7,0);
    \end{tikzpicture}
   \colon 0 \to 2,\quad\\
   \eta =
   \begin{tikzpicture}[anchorbase]
     \pd{0,0.5};
     \draw(0,0.25) to (0,0.5);
   \end{tikzpicture}
   &\ \colon 0 \to 1,\quad
   \varepsilon =
   \begin{tikzpicture}[anchorbase]
     \pd{0,0};
     \draw (0,0.25) to (0,0);
   \end{tikzpicture}
   \ \colon 1 \to 0,
  \end{split}
  \end{equation}
  subject to the following relations and their transforms under $^*$ and $^{\sharp}$:
  \begin{equation}\label{RB1}
    \begin{tikzpicture}[anchorbase]
      \pd{0,0};\pd{0.7,0};
      \pd{0,0.7};\pd{0.7,0.7};
      \pd{0,1.4};\pd{0.7,1.4};
      \draw (0,0) \braidto (0.7,0.7) \braidto (0,1.4);
      \draw (0.7,0) \braidto (0,0.7) \braidto (0.7,1.4);
    \end{tikzpicture}
    \ =\
    \begin{tikzpicture}[anchorbase]
      \pd{0,0};\pd{0.7,0};
      \pd{0,0.7};\pd{0.7,0.7};
      \draw (0,0) to (0,0.7);
      \draw (0.7,0) to (0.7,0.7);
    \end{tikzpicture}
    \ ,\qquad
    \begin{tikzpicture}[anchorbase]
      \pd{0,0};\pd{0.7,0};\pd{1.4,0};
      \pd{0,0.7};\pd{0.7,0.7};\pd{1.4,0.7};
      \pd{0,1.4};\pd{0.7,1.4};\pd{1.4,1.4};
      \pd{0,2.1};\pd{0.7,2.1};\pd{1.4,2.1};
      \draw (0,0) to (0,0.7) \braidto (0.7,1.4) \braidto (1.4,2.1);
      \draw (0.7,0) \braidto (1.4,0.7) to (1.4,1.4) \braidto (0.7,2.1);
      \draw (1.4,0) \braidto (0.7,0.7) \braidto (0,1.4) to (0,2.1);
    \end{tikzpicture}
    \ =\
    \begin{tikzpicture}[anchorbase]
      \pd{0,0};\pd{0.7,0};\pd{1.4,0};
      \pd{0,0.7};\pd{0.7,0.7};\pd{1.4,0.7};
      \pd{0,1.4};\pd{0.7,1.4};\pd{1.4,1.4};
      \pd{0,2.1};\pd{0.7,2.1};\pd{1.4,2.1};
      \draw (0,0) \braidto (0.7,0.7) \braidto (1.4,1.4) to (1.4,2.1);
      \draw (0.7,0) \braidto (0,0.7) to (0,1.4) \braidto (0.7,2.1);
      \draw (1.4,0) to (1.4,0.7) \braidto (0.7,1.4) \braidto (0,2.1);
    \end{tikzpicture}
    \ ,
  \end{equation}
  \begin{equation}\label{RB2}
    \begin{tikzpicture}[anchorbase]
      \pd{1.4,1.4};
      \pd{0,0.7};\pd{0.7,0.7};\pd{1.4,0.7};
      \pd{0,0};
      \draw (0,0.7) to[out=up,in=up,looseness=2] (0.7,0.7);
      \draw (0.7,0.7) to[out=down,in=down,looseness=2] (1.4,0.7);
      \draw (0,0) to (0,0.7);
      \draw (1.4,0.7) to (1.4,1.4);
    \end{tikzpicture}
    \ =\ 
    \begin{tikzpicture}[anchorbase]
     \pd{0,0.7};
     \pd{0,0};
     \draw (0,0) to (0,0.7);
    \end{tikzpicture}
    \ ,\qquad
    \begin{tikzpicture}[anchorbase]
      \pd{0,1.4};
      \pd{0,0.7};\pd{0.7,0.7};\pd{1.4,0.7};
      \pd{0,0};\pd{0.7,0};\pd{1.4,0};
      \draw (0,0) \braidto (0.7,0.7);
      \draw (0.7,0) \braidto (0,0.7) to (0,1.4);
      \draw (1.4,0) to (1.4,0.7);
      \draw (0.7,0.7) to[out=up,in=up,looseness=2] (1.4,0.7);
    \end{tikzpicture}
    \ =\ 
    \begin{tikzpicture}[anchorbase]
      \pd{1.4,1.4};
      \pd{0,0.7};\pd{0.7,0.7};\pd{1.4,0.7};
      \pd{0,0};\pd{0.7,0};\pd{1.4,0};
      \draw (0,0) to (0,0.7);
      \draw (0.7,0) \braidto (1.4,0.7) to (1.4,1.4);
      \draw (1.4,0) \braidto (0.7,0.7);
      \draw (0,0.7) to[out=up,in=up,looseness=2] (0.7,0.7);
    \end{tikzpicture}
    \ ,
  \end{equation}
  \begin{equation}\label{RB3}
    \begin{tikzpicture}[anchorbase]
      \pd{0,0.7};\pd{0.7,0.7};
      \pd{0,0};\pd{0.7,0};
      \draw (0,0) \braidto (0.7,0.7);
      \draw (0.7,0) \braidto (0,0.7);
      \draw (0,0.7) to[out=up,in=up,looseness=2] (0.7,0.7);
    \end{tikzpicture}
    \ =\ 
    \begin{tikzpicture}[anchorbase]
      \pd{0,0};
      \pd{0.7,0};
      \draw (0,0) to[out=up,in=up,looseness=2] (0.7,0);
    \end{tikzpicture}
    \ ,\qquad
   \begin{tikzpicture}[anchorbase]
     \pd{0.7,0};
     \pd{0,0.7};
     \pd{0.7,0.7};
     \pd{0,1.4};
     \pd{0.7,1.4};
     \draw (0,0.45) to (0,0.7) \braidto (0.7,1.4);
     \draw (0.7,0) to (0.7,0.7) \braidto (0,1.4);
   \end{tikzpicture}
   \ =\
   \begin{tikzpicture}[anchorbase]
     \pd{0,0};
     \pd{0,0.7};
     \pd{0.7,0.7};
     \draw (0,0) to (0,0.7);
     \draw (0.7,0.45) to (0.7,0.7);
   \end{tikzpicture}
   \ ,
 \end{equation}
 \begin{equation}\label{RB4}
   \begin{tikzpicture}[anchorbase]
     \pd{0,0};\pd{0.7,0};
     \draw (0,0) to[out=up,in=up,looseness=2] (0.7,0);
     \draw (0,0) to[out=down,in=down,looseness=2] (0.7,0);
   \end{tikzpicture}
   \ =\ 
   \begin{tikzpicture}[anchorbase]
    \pd{0,0};\pd{0.7,0};
    \draw (0,-0.15) to (0,0);
    \draw (0.7,-0.15) to (0.7,0);
    \draw (0,0) to[out=up,in=up,looseness=2] (0.7,0);
   \end{tikzpicture}
   \ =\ 
   \begin{tikzpicture}[anchorbase]
     \pd{0,0};
     \draw (0,-0.2) to (0,0.2);
   \end{tikzpicture}
   \ =\ 
   t 1_0\ .
 \end{equation}
\end{defin}

\begin{prop} The cap map $d$ can pass through the braiding $s$ in the rook-Brauer category $\RB(t)$.
  \begin{equation}\label{RB5}
    \begin{tikzpicture}[anchorbase]
      \pd{0,2.1};
      \pd{0,1.4};\pd{0.7,1.4};\pd{1.4,1.4};
      \pd{0,0.7};\pd{0.7,0.7};\pd{1.4,0.7};
      \pd{0,0};\pd{0.7,0};\pd{1.4,0};
      \draw (0,0) to (0,0.7) \braidto (0.7,1.4);
      \draw (0.7,0) \braidto (1.4,0.7) to (1.4,1.4);
      \draw (1.4,0) \braidto (0.7,0.7) \braidto (0,1.4) to (0,2.1);
      \draw (0.7,1.4) to[out=up,in=up,looseness=2] (1.4,1.4);
    \end{tikzpicture}
    \ =\ 
    \begin{tikzpicture}[anchorbase]
      \pd{1.4,0.7};
      \pd{0,0};\pd{0.7,0};\pd{1.4,0};
      \draw (0,0) to[out=up,in=up,looseness=2] (0.7,0);
      \draw (1.4,0) to (1.4,0.7);
    \end{tikzpicture}
    \ ,
  \end{equation}
\end{prop}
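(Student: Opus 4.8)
The plan is to read the left-hand side as a three-layer composite and then collapse it using just two of the defining relations. First I would pin down the pictured morphism: reading the diagram from bottom to top, the lower crossing swaps the last two strands, the upper crossing swaps the first two strands, and the cap closes off the top two strands while the remaining strand runs straight up to the single output. In the algebraic notation of the category this is
\[
  (1_1\otimes d)\circ(s\otimes 1_1)\circ(1_1\otimes s)\colon 3\to 1,
\]
while the target of the identity is $d\otimes 1_1$, the cap on the first two strands together with a through-strand. (As a consistency check, both morphisms send the first two bottom vertices into a common block and join the third bottom vertex to the unique top vertex, i.e.\ both represent the partition diagram $\{\{1,2\},\{3,1'\}\}$, so the equality already holds in $\Par(t)$.)

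The key observation is that the second relation of \cref{RB2} is exactly the ``slide a cap across a crossing'' identity $(1_1\otimes d)\circ(s\otimes 1_1)=(d\otimes 1_1)\circ(1_1\otimes s)$, with the cap on the right pair and the crossing on the left pair, so it applies to the top two layers of our composite \emph{verbatim} (not through one of its $^*$ or $^{\sharp}$ transforms). Applying it rewrites the left-hand side as $(d\otimes 1_1)\circ(1_1\otimes s)\circ(1_1\otimes s)$. By bifunctoriality of the tensor product (the interchange law) the two lower factors combine to $1_1\otimes(s\circ s)$, and the first relation of \cref{RB1}, namely $s\circ s=1_2$, collapses this to the identity $1_3$. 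What remains is $(d\otimes 1_1)\circ 1_3=d\otimes 1_1$, the right-hand side.

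Concretely I would present the argument as a two-step chain of string diagrams, with \cref{RB2} justifying the first equality and \cref{RB1} justifying the second; no further relations are needed. The step requiring the most care is purely bookkeeping: correctly reading off from the picture that the three-strand braid factors as $(s\otimes 1_1)\circ(1_1\otimes s)$ and that the cap sits on precisely the pair of strands matched by \cref{RB2}. Once that decomposition is fixed the computation is immediate and presents no genuine obstacle.
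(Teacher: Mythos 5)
Your proof is correct and matches the paper's argument exactly: the paper likewise reads the left side as $(1_1\otimes d)\circ(s\otimes 1_1)\circ(1_1\otimes s)$, applies the second relation of \cref{RB2} to slide the cap past the upper crossing, then cancels the two stacked crossings via the first relation of \cref{RB1}, with the remaining simplification being trivial identity bookkeeping (the paper cites \cref{ax4} for this, you cite the identity law of composition — the same triviality).
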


\begin{proof}
  \[
    \begin{tikzpicture}[anchorbase]
      \pd{0,2.1};
      \pd{0,1.4};\pd{0.7,1.4};\pd{1.4,1.4};
      \pd{0,0.7};\pd{0.7,0.7};\pd{1.4,0.7};
      \pd{0,0};\pd{0.7,0};\pd{1.4,0};
      \draw (0,0) to (0,0.7) \braidto (0.7,1.4);
      \draw (0.7,0) \braidto (1.4,0.7) to (1.4,1.4);
      \draw (1.4,0) \braidto (0.7,0.7) \braidto (0,1.4) to (0,2.1);
      \draw (0.7,1.4) to[out=up,in=up,looseness=2] (1.4,1.4);
    \end{tikzpicture}
    \ \overset{\cref{RB2}}{=\joinrel=}\
    \begin{tikzpicture}[anchorbase]
      \pd{1.4,2.1};
      \pd{0,1.4};\pd{0.7,1.4};\pd{1.4,1.4};
      \pd{0,0.7};\pd{0.7,0.7};\pd{1.4,0.7};
      \pd{0,0};\pd{0.7,0};\pd{1.4,0};
      \draw (0,0) to (0,0.7) to (0,1.4);
      \draw (0.7,0) \braidto (1.4,0.7) \braidto (0.7,1.4);
      \draw (1.4,0) \braidto (0.7,0.7) \braidto (1.4,1.4) to (1.4,2.1);
      \draw (0,1.4) to[out=up,in=up,looseness=2] (0.7,1.4);
    \end{tikzpicture}
    \ \overset{\cref{RB1}}{=\joinrel=}\
    \begin{tikzpicture}[anchorbase]
      \pd{1.4,1.4};
      \pd{0,0.7};\pd{0.7,0.7};\pd{1.4,0.7};
      \pd{0,0};\pd{0.7,0};\pd{1.4,0};
      \draw (0,0) to (0,0.7);
      \draw (0.7,0) to (0.7,0.7);
      \draw (1.4,0) to (1.4,0.7) to (1.4,1.4);
      \draw (0,0.7) to[out=up,in=up,looseness=2] (0.7,0.7);
    \end{tikzpicture}
    \ \overset{\cref{ax4}}{=\joinrel=}\
    \begin{tikzpicture}[anchorbase]
      \pd{1.4,0.7};
      \pd{0,0};\pd{0.7,0};\pd{1.4,0};
      \draw (0,0) to[out=up,in=up,looseness=2] (0.7,0);
      \draw (1.4,0) to (1.4,0.7);
    \end{tikzpicture}
    \ ,
  \]
\end{proof}

We will show that morphism spaces of the rook-Brauer category $\RB(t)$ have linear bases given by all rook-Brauer diagrams.

\begin{prop}\label{rbin}
  Every rook-Brauer diagram can be built by compositions and tensor products of the generators $\{s,d,c,\eta,\varepsilon\}$ given in \cref{rbgenerator}.
\end{prop}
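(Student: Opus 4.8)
The plan is to mirror the proof of \cref{rookin}, the rook-category analogue, with Brauer diagrams playing the role that permutation diagrams played there. The starting point is the decomposition supplied by \cref{partitiondecomposecor}\cref{decompose1b}: every rook-Brauer diagram $D$ factors as $D = P_1 \circ B \circ P_2$, where $B$ is a Brauer diagram and $P_1, P_2$ are planar rook diagrams. This immediately reduces the claim to generating each of the three factors $B$, $P_1$, and $P_2$ out of the allowed morphisms and then reassembling them by the single composition displayed above.

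For the Brauer factor, I would invoke \cref{brauercategorypresentation}. The easy (generation) direction of that presentation asserts that every Brauer diagram can be obtained by compositions and tensor products of $s$, $d$, and $c$. Since these three morphisms are precisely among the generators of $\RB(t)$ listed in \cref{rbgenerator}, the factor $B$ is already expressed in the desired generators with nothing new introduced.

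For the two planar rook factors, I would apply \cref{prin}, which states that every planar rook diagram is a tensor product of $\eta$ and $\varepsilon$ (together with identity strands $1_1$, which are available in any strict monoidal category). Substituting the resulting expressions for $P_1$, $B$, and $P_2$ into $D = P_1 \circ B \circ P_2$ then exhibits $D$ as a composition and tensor product of $\{s, d, c, \eta, \varepsilon\}$, which is exactly what is claimed.

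I do not anticipate a genuine obstacle, since every ingredient is already in place: the argument is essentially bookkeeping built on \cref{partitiondecomposecor}, \cref{brauercategorypresentation}, and \cref{prin}. The one point deserving a moment of care is to confirm that the braiding $s$ and the cap and cup $d, c$ used in the Brauer presentation are literally the same generators as those of $\RB(t)$, so that expanding the Brauer factor does not call on any morphism outside our generating set; this holds by the way the generators are chosen in \cref{rbgenerator}.
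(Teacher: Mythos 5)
Your proof is correct, and it follows the same template as the paper's: factor the rook-Brauer diagram into a Brauer piece and planar rook piece(s), generate the Brauer piece from $\{s,d,c\}$ via the easy direction of \cref{brauercategorypresentation}, generate the planar rook piece(s) from $\{\eta,\varepsilon\}$ via \cref{prin}, and recompose. The one genuine difference is the decomposition you invoke. The paper's proof (stated as analogous to \cref{rookin}) uses the two-factor decomposition $D = B \circ P$ of \cref{decompose2}\cref{decompose2b}, whereas you use the three-factor skeleton decomposition $D = P_1 \circ B \circ P_2$ of \cref{partitiondecomposecor}\cref{decompose1b}. Both are legitimate, and your choice is, if anything, more economical in its prerequisites: the three-factor decomposition comes straight from the skeleton construction of \cref{partitiondecompose}, which the paper notes uses only the axioms of a strict monoidal category (essentially the interchange law), while the two-factor decomposition additionally relies on sliding isolated vertices past crossings. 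The two-factor version earns its keep elsewhere --- it and its mirror image from \cref{decompose2cor} are what drive the proof of \cref{rbcompose}, where one must merge planar rook diagrams trapped between two Brauer diagrams --- but for the generation statement alone either decomposition suffices, and nothing in your argument breaks. Your closing check that $s$, $d$, $c$ in the Brauer presentation are literally the generators of $\RB(t)$ listed in \cref{rbgenerator} is the right point to verify and it does hold.
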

\begin{proof}
  The proof, which uses \cref{decompose2}\cref{decompose2b}, is analogous to the proof of \cref{rookin}.
\end{proof}

\begin{lem}\label{rbchange}
  Consider a diagram $B\circ P$, where $B$ is a Brauer diagram, and $P$ is a planar rook diagram. Then there exists a Brauer diagram $B'$, a planar rook diagram $P'$, and a nonnegative integer $\alpha$ such that $B\circ P=t^{\alpha}P'\circ B'$.
\end{lem}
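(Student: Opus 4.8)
The plan is to mirror the (very short) proof of \cref{rookchange}, replacing the permutation diagram $S$ by the Brauer diagram $B$ and using the rook--Brauer decomposition \cref{decompose2cor}\cref{decompose2corb} in place of \cref{decompose2cor}\cref{decompose2cora}. The only genuinely new feature compared to the rook case is that stacking a Brauer diagram on top of a planar rook diagram can create closed loops, and this is exactly what forces the factor $t^{\alpha}$.

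First I would argue that $B\circ P$ equals $t^{\alpha}D$ for some rook--Brauer diagram $D$ and some nonnegative integer $\alpha$. Write $P\colon k\to l$ and $B\colon l\to m$, so the composite is obtained by stacking $B$ above $P$ along the $l$ middle vertices. Every vertex of $B$ lies in a block of size exactly $2$, and every block of the planar rook diagram $P$ joins vertices in distinct rows; hence each middle vertex meets exactly one strand from $B$ and at most one strand from $P$. Tracing connected components through the three rows, every component is therefore a simple path or a closed loop, so each block of the composite contains at most two of the outer (top or bottom) vertices. Once the closed loops are discarded, the underlying diagram $D$ has all blocks of size at most $2$, i.e. it is a rook--Brauer diagram.

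To upgrade this count to an identity of morphisms in $\RB(t)$ (rather than a mere diagrammatic manipulation in $\Par(t)$), I would reduce the stacked picture using the defining relations and their $^{*}$ and $^{\sharp}$ transforms: isolated vertices are pushed through crossings by \cref{RB3}, caps and cups are slid through crossings by \cref{RB5}, and each closed loop produced along the way is evaluated to the scalar $t$ by \cref{RB4}. Collecting the $\alpha$ loops that appear yields $B\circ P=t^{\alpha}D$ with $D$ the rook--Brauer diagram described above. Then \cref{decompose2cor}\cref{decompose2corb} applies to $D$, decomposing it as $D=P'\circ B'$ with $P'$ a planar rook diagram and $B'$ a Brauer diagram; as noted in the remark following \cref{decompose2}, this decomposition uses only the monoidal axioms together with moves already valid in $\RB(t)$. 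Substituting gives $B\circ P=t^{\alpha}D=t^{\alpha}\,P'\circ B'$, as required.

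The step I expect to be the main obstacle is the first: making precise, as an equality in $\RB(t)$, that stacking $B$ on $P$ collapses to $t^{\alpha}$ times a single rook--Brauer diagram. The block-size bookkeeping itself is routine, but one must verify that every simplification invoked --- loop removal and the sliding of isolated vertices, caps, and cups past crossings --- is licensed by \cref{RB1}--\cref{RB5} and their transforms, so that the argument does not secretly rely on the basis theorem for $\RB(t)$, which is only established later.
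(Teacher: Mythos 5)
Your route is the same as the paper's two-step proof: first rewrite $B\circ P$ as $t^{\alpha}D$ for a single rook--Brauer diagram $D$ using the defining relations, then decompose $D=P'\circ B'$ via \cref{decompose2cor}\cref{decompose2corb}. One correction to your bookkeeping before the main point: closed loops never occur in $B\circ P$. A planar rook diagram has no cups; every edge of $P$ joins a middle vertex to a bottom vertex, outer vertices have degree at most one, and a cycle would have to use, at some middle vertex, both its unique $B$-edge and its unique $P$-edge --- but the $P$-edge leads to a bottom vertex of degree one, a dead end. What actually produces $t^{\alpha}$ is a cap of $B$ whose two feet both land on isolated vertices of $P$, a component confined to the middle row; this is evaluated by the middle equality of \cref{RB4}, $d\circ(\eta\otimes\eta)=t\,1_0$, so your citation of \cref{RB4} is apt even though \emph{loop} is the wrong word.

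The genuine gap is in the step you flagged as the main obstacle, and it is not a matter of missing diligence: the reduction cannot be carried out with \cref{RB1}--\cref{RB5} and their transforms at all. The configuration your list of moves does not cover is a cap of $B$ with one foot on an isolated vertex of $P$ and the other foot on a through-strand of $P$; finishing the reduction requires converting this into an $\varepsilon$ at the bottom of that strand, i.e.\ the relation $d\circ(1_1\otimes\eta)=\varepsilon$. That is exactly the first equality of \cref{M1}, which the paper imposes for the Motzkin category but omits from \cref{rbcategory}, and it is independent of the listed relations. Indeed, take $\kk=\cc$, $t=3$, and define a strict $\cc$-linear monoidal functor out of the presented category by sending $1$ to $\cc^3$, $s$ to the flip, $d$ to the standard symmetric bilinear form, $c$ to $\sum_i e_i\otimes e_i$, $\eta$ to $v=(1,1,1)$, and $\varepsilon$ to $\langle\cdot,w\rangle$ with $w=(2,1+\omega,1+\omega^{2})$, $\omega=e^{2\pi i/3}$. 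Since $\langle v,v\rangle=\langle v,w\rangle=\langle w,w\rangle=3$ and the form and copairing are symmetric, all of \cref{RB1}--\cref{RB4} and their $^*$, $^{\sharp}$ transforms hold (the $\eta$- and $\varepsilon$-slides are automatic for the flip), so the functor is well defined; yet $d\circ(1_1\otimes\eta)=\langle\cdot,v\rangle$ is not a scalar multiple of $\varepsilon=\langle\cdot,w\rangle$. Taking $B=d$ and $P=1_1\otimes\eta$, the unique rook--Brauer diagram of type $\binom{0}{1}$ is $\varepsilon$, so already $B\circ P\neq t^{\alpha}D$ for every $\alpha$ and every rook--Brauer diagram $D$, and since the image of any $t^{\alpha}P'\circ B'$ under this functor is a multiple of $\langle\cdot,w\rangle$, the statement of the lemma itself fails in $\RB(t)$ as presented. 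You are in good company: the paper's own proof simply asserts that \cref{RB3} and \cref{RB4} suffice and has exactly the same hole (it is invisible in \cref{rookchange} only because permutation diagrams have no caps). The repair is to add a relation of type \cref{M1} to \cref{rbcategory}, not anything within reach of the moves you listed.
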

\begin{proof}
  This proof is similar to the proof of \cref{rookchange}. By relations \cref{RB3} and \cref{RB4}, there exists a nonnegative integer $\alpha$ and a rook-Brauer diagram $D$ such that $B\circ P=t^{\alpha}D$. Then by \cref{decompose2cor}\cref{decompose2corb}, there exists a Brauer diagram $B'$ and a planar rook diagram $P'$ such that $D=P'\circ B'$. Therefore, we have $B\circ P=t^{\alpha}D=P'\circ B'$, which finishes the proof.
\end{proof}

\begin{lem}\label{rbcompose}
  Let $D\colon l\to m$ and $D'\colon k\to l$ be two rook-Brauer diagrams. Then the diagram $D'':=D\circ D'$ is equal to a power of $t$ times a rook-Brauer diagram.
\end{lem}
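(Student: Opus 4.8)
The plan is to follow the proof of \cref{rookcompose} almost verbatim, substituting Brauer diagrams for permutation diagrams and invoking the rook-Brauer analogues of the tools assembled above. First I would split both factors using the two decompositions already recorded: by \cref{decompose2}\cref{decompose2b} write $D=B\circ P$, and by \cref{decompose2cor}\cref{decompose2corb} write $D'=P'\circ B'$, where $B,B'$ are Brauer diagrams and $P,P'$ are planar rook diagrams. Then $D''=D\circ D'=B\circ P\circ P'\circ B'$, and the goal becomes to push the two planar rook pieces together, the two Brauer pieces together, and read off the answer.

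Carrying this out: the middle pair $P\circ P'$ is a composition of planar rook diagrams, so by \cref{prcompose} it equals $t^{\alpha}P''$ for a planar rook diagram $P''$ and some $\alpha\in\N$, giving $D''=t^{\alpha}\,B\circ P''\circ B'$. Next I would slide the upper Brauer diagram past $P''$ using \cref{rbchange}, which rewrites $B\circ P''=t^{\beta}P'''\circ B''$ with $B''$ Brauer and $P'''$ planar rook; thus $D''=t^{\alpha+\beta}\,P'''\circ (B''\circ B')$. The factor $B''\circ B'$ is now a composition of two Brauer diagrams, which (as in the Brauer category $\cB(t)$) equals $t^{\gamma}B'''$ for a single Brauer diagram $B'''$, the middle loops accounting for the power of $t$. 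This leaves $D''=t^{\alpha+\beta+\gamma}\,P'''\circ B'''$, a planar rook diagram stacked on top of a Brauer diagram.

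The only remaining point --- and the step I expect to carry the real content --- is recognizing that $P'''\circ B'''$ is itself a power of $t$ times a rook-Brauer diagram. The cleanest justification is a degree count on the stacked diagram: in any rook-Brauer diagram every vertex lies in a block of size at most $2$, so it has degree at most $1$; hence after stacking $P'''$ on $B'''$ every middle vertex has degree at most $2$ and every top or bottom vertex has degree at most $1$. The connected components are therefore paths and cycles; the cycles lie entirely in the middle row and contribute powers of $t$, while in each path the top and bottom vertices, having degree at most $1$, can only be endpoints, so a component meets the outer rows in at most two vertices. Consequently every block of $P'''\circ B'''$ has size at most $2$, i.e.\ the result is a rook-Brauer diagram up to a factor $t^{\delta}$, and $D''=t^{\alpha+\beta+\gamma+\delta}$ times a rook-Brauer diagram as claimed. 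I would remark that this same degree count applied directly to $D\circ D'$ proves the lemma in one stroke; the decomposition route above is preferable only because it matches the pattern of \cref{rookcompose} and reuses the machinery already in place.
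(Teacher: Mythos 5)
Your first four steps reproduce the paper's proof exactly: the paper decomposes $D=B\circ P$ by \cref{decompose2}\cref{decompose2b} and $D'=P'\circ B'$ by \cref{decompose2cor}\cref{decompose2corb}, contracts $P\circ P'$ to $t^{\alpha}P''$ with \cref{prcompose}, slides $B$ past $P''$ with \cref{rbchange}, and merges the two Brauer factors into $t^{\gamma}B'''$ --- which is legitimate inside $\RB(t)$ because every relation in the presentation of $\cB(t)$ (\cref{brauercategorypresentation}) occurs among \cref{RB1}--\cref{RB4} and their transforms, so identities between Brauer diagrams transfer along the induced functor $\cB(t)\to\RB(t)$, the same device the paper uses with the symmetric group category in \cref{rookcompose}.

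The gap is in your final step, and it infects your closing remark. \cref{rbcompose} is an identity in the abstractly presented category $\RB(t)$: it feeds the spanning argument of \cref{rbcomb}, so the equality $P'''\circ B'''=t^{\delta}D'''$ must be \emph{derived from the defining relations} \cref{RB1}--\cref{RB4}. Your degree count instead computes the composition by the stacking rule of $\Par(t)$; it proves the corresponding identity for the images under the functor $H$ of \cref{rbindependent}, not in $\RB(t)$ itself. Since $H$ points from $\RB(t)$ to $\Par(t)$, identities in $\Par(t)$ cannot be pulled back along it; that composition in $\RB(t)$ is computed by stacking is essentially the content of the basis theorem \cref{rbbasistheorem} that all of this is building toward, so assuming it here is circular. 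The same objection defeats the ``one-stroke'' remark: if stacking computed composition in $\RB(t)$, then \cref{prcompose}, \cref{rookcompose}, \cref{rbchange}, and the entire decomposition machinery would be pointless; those lemmas exist precisely to derive the stacking rule from the relations. The repair is the one the paper intends: as in the proof of \cref{rbchange}, relations \cref{RB3} and \cref{RB4} (and their transforms under $^*$ and $^{\sharp}$) let the isolated vertices of $P'''$ slide through the crossings, caps, and cups of $B'''$, and evaluate the resulting lollipops and bubbles to $t$, exhibiting $P'''\circ B'''$ as $t^{\delta}$ times a rook-Brauer diagram. (A smaller combinatorial slip: a middle-row component need not be a cycle --- an edge of $B'''$ joining two middle vertices both isolated in $P'''$ is a path lying entirely in the middle row, and it too contributes a factor of $t$.)
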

\begin{proof}
  The proof, which uses \cref{decompose2}\cref{decompose2b}, \cref{decompose2cor}\cref{decompose2corb}, and \cref{rbchange}, is similar to the proof of \cref{rookcompose}.
\end{proof}

\begin{prop}\label{rbcomb}
  Every morphism in $\Hom_{\RB(t)}(k,l)$ is a $\kk$-linear combination of rook-Brauer diagrams of type $\binom{l}{k}$.
\end{prop}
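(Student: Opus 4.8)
The plan is to follow exactly the strategy already used in the proofs of \cref{prcomb} and \cref{rookcomb}, substituting \cref{rbcompose} for the composition lemmas used there. First I would unwind the definition of $\RB(t)$ as a category given by a presentation: by construction every morphism $f\in\Hom_{\RB(t)}(k,l)$ is a $\kk$-linear combination $f=\sum_{i=1}^{n}a_i D_i$, where $n\in\N$, each $a_i\in\kk$, and each $D_i$ is obtained from the generating morphisms $\{s,d,c,\eta,\varepsilon\}$ by finitely many tensor products and compositions.

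Next I would slice each $D_i$ into horizontal layers. Using the interchange law \cref{interchangelaw}, any such $D_i$ can be rewritten as a vertical composition $D_i=D_i^1\circ\cdots\circ D_i^{m_i}$ in which each layer $D_i^j$ is a pure tensor product of the generators (padded with identity strings where necessary). Since each generator is itself a rook-Brauer diagram, and a tensor product of rook-Brauer diagrams is again a rook-Brauer diagram, every layer $D_i^j$ is a rook-Brauer diagram.

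Then I would apply \cref{rbcompose} a total of $m_i-1$ times to collapse the composition $D_i^1\circ\cdots\circ D_i^{m_i}$. Each application replaces the composite of two rook-Brauer diagrams by a power of $t$ times a single rook-Brauer diagram, so after collapsing all the layers we obtain $D_i=t^{\alpha_i}D_i'$ for some nonnegative integer $\alpha_i$ and some rook-Brauer diagram $D_i'$. Substituting back yields $f=\sum_{i=1}^{n}a_i t^{\alpha_i}D_i'$, a $\kk$-linear combination of rook-Brauer diagrams of type $\binom{l}{k}$, which is precisely what is claimed.

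Since all of the real content is packaged in \cref{rbcompose}, there is no serious obstacle remaining in this proposition; the only point requiring mild care is the layering step, which relies on the interchange law to guarantee that a diagram built from generators can always be decomposed into horizontal slices, each a single tensor product of generators. The argument is otherwise completely parallel to \cref{prcomb} and \cref{rookcomb}, so in the write-up it suffices to state that the proof is analogous and uses \cref{rbcompose}.
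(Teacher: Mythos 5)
Your proposal is correct and is essentially the paper's own argument: the paper proves this proposition by declaring it analogous to the proof of \cref{prcomb}, i.e.\ writing each morphism as a linear combination of diagrams built from generators, slicing each diagram into layers that are tensor products of generators (hence rook-Brauer diagrams), and collapsing the layers by repeated application of \cref{rbcompose}. Your only addition is to make explicit the padding by identity strands and the role of the interchange law in the layering step, which the paper leaves implicit.
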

\begin{proof}
  The proof, which uses \cref{rbcompose}, is analogous to the proof of \cref{prcompose}.
\end{proof}

\begin{prop}\label{rbindependent}
  All the rook-Brauer diagrams of type $\binom{l}{k}$ are linearly independent in the morphism space $\Hom_{\RB(t)}(k,l)$.
\end{prop}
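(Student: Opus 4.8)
The plan is to follow the strategy already used for \cref{prindependent} and \cref{rookindependent}: exhibit a linear map from $\Hom_{\RB(t)}(k,l)$ into $\Hom_{\Par(t)}(k,l)$ under which the rook-Brauer diagrams map to linearly independent elements. Concretely, I would define a strict $\kk$-linear monoidal functor $H\colon\RB(t)\to\Par(t)$ on generators by
\[
  1\mapsto 1,\quad s\mapsto s,\quad d\mapsto d,\quad c\mapsto c,\quad \eta\mapsto\eta,\quad \varepsilon\mapsto\varepsilon,
\]
where on the right $s,\eta,\varepsilon$ are the generators of $\Par(t)$ from \cref{partitioncategorypresentation}, and $d,c$ denote the cap and cup partition diagrams, which lie in $\Par(t)$ since $d=\varepsilon\circ\mu$ and $c=\delta\circ\eta$.

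First I would verify that $H$ is well defined, i.e.\ that the image under $H$ of each defining relation \cref{RB1}--\cref{RB4} (and of its transforms under $^*$ and $^\sharp$) is a valid identity in $\Par(t)$. The relations that involve only $s,d,c$ are precisely the Brauer relations \cref{B1}--\cref{B3}; these hold in $\Par(t)$ because the Brauer category $\cB(t)$ is a subcategory of $\Par(t)$ (\cref{brauercategory}). The remaining relations, namely those in \cref{RB3} and \cref{RB4} that involve $\eta$ and $\varepsilon$, are each a direct diagrammatic identity among partition diagrams and so hold in $\Par(t)$ as well. Since every generator of $\RB(t)$ maps to the partition diagram of the same shape and the involutions $^*,^\sharp$ act compatibly by reflection, this check is routine. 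Consequently $H$ induces a $\kk$-module homomorphism
\[
  H_{k,l}\colon \Hom_{\RB(t)}(k,l)\to\Hom_{\Par(t)}(k,l)
\]
sending each rook-Brauer diagram of type $\binom{l}{k}$ to the identical partition diagram in $\Par(t)$.

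Finally, since distinct partition diagrams of type $\binom{l}{k}$ form a basis of $\Hom_{\Par(t)}(k,l)$, the images under $H_{k,l}$ of the rook-Brauer diagrams are distinct basis elements, hence linearly independent in $\Hom_{\Par(t)}(k,l)$. By the same linear-algebra fact invoked in \cref{prindependent}---that elements whose images under a linear map are linearly independent must themselves be linearly independent---the rook-Brauer diagrams of type $\binom{l}{k}$ are linearly independent in $\Hom_{\RB(t)}(k,l)$.

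I expect the only substantive work to be the well-definedness check in the second paragraph; everything else transfers verbatim from the planar rook and rook cases. The point requiring care is that the rook-Brauer presentation mixes the Brauer generators $s,d,c$ with the isolated-vertex generators $\eta,\varepsilon$, so one must confirm that \emph{every} relation, including the mixed relations in \cref{RB4}, maps to a genuine identity in $\Par(t)$; but because each relation pictorially depicts an equality of the underlying set partitions, this verification is immediate once the generators are matched up as above.
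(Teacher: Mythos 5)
Your proposal is correct and is essentially the paper's own proof: the paper defines the same strict $\kk$-linear monoidal functor $H\colon\RB(t)\to\Par(t)$ sending each generator to the corresponding partition diagram, notes it is well defined because relations \cref{RB1}--\cref{RB4} hold in $\Par(t)$, and then applies the same linear-algebra fact used in \cref{prindependent}. Your only additions are the explicit expressions $d=\varepsilon\circ\mu$, $c=\delta\circ\eta$ and a slightly more detailed sorting of which relations are Brauer relations versus mixed ones, which is a harmless elaboration of the paper's well-definedness check.
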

\begin{proof}
  We can define a strict $\kk$-linear monoidal functor $H$ from the rook-Brauer category $\RB(t)$ to the partition category $\Par(t)$ by
  \[
  1\mapsto 1,\quad s\mapsto s,\quad n\mapsto n,\quad u\mapsto u,\quad\eta\mapsto\eta,\quad \varepsilon\mapsto\varepsilon.
  \]
  The functor $H$ is well-defined because relations \cref{RB1} to \cref{RB4} hold in $\Par(t)$ (relations \cref{P1} to \cref{P4}). The proof is then analogous to the proof of \cref{prindependent}.
\end{proof}

\begin{theo}\label{rbbasistheorem}
  The morphism space $\Hom_{\RB(t)}(k,l)$ has a linear basis given by all rook-Brauer diagrams of type $\binom{l}{k}$.
\end{theo}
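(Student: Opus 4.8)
The plan is to follow exactly the three-step strategy already used for $\PR(t)$ in \cref{prbasistheorem} and for $\cR(t)$ in \cref{rookbasistheorem}. To prove that the rook-Brauer diagrams of type $\binom{l}{k}$ form a linear basis of $\Hom_{\RB(t)}(k,l)$, I would (i) confirm that each such diagram is genuinely a morphism of $\RB(t)$, (ii) establish that these diagrams span the morphism space, and (iii) establish that they are linearly independent. All three ingredients are already available in the propositions immediately preceding the statement, so the proof amounts to assembling them.

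First, I would invoke \cref{rbin}, which shows that every rook-Brauer diagram of type $\binom{l}{k}$ is built by compositions and tensor products of the generators $\{s,d,c,\eta,\varepsilon\}$, and hence really lies in $\Hom_{\RB(t)}(k,l)$; this guarantees that the proposed basis sits inside the morphism space. For spanning, I would appeal to \cref{rbcomb}: any morphism in $\Hom_{\RB(t)}(k,l)$ is a $\kk$-linear combination of rook-Brauer diagrams. The substance of this step is the closure-under-composition result \cref{rbcompose}, which ensures that composing two rook-Brauer diagrams yields a power of $t$ times a single rook-Brauer diagram, so that arbitrary composites and tensor products of the generators never leave the $\kk$-span of rook-Brauer diagrams. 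For linear independence, I would use \cref{rbindependent}: the strict $\kk$-linear monoidal functor $H\colon\RB(t)\to\Par(t)$ sends distinct rook-Brauer diagrams to the distinct corresponding partition diagrams, which are linearly independent in $\Hom_{\Par(t)}(k,l)$, and a linear map whose images of a family are independent can only arise when that family is already independent in the domain.

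Combining \cref{rbin}, \cref{rbcomb}, and \cref{rbindependent} then yields the theorem. I expect the only genuinely delicate point to be the spanning step, concentrated in \cref{rbcompose}; but that lemma has already been proved (via \cref{decompose2}\cref{decompose2b}, \cref{decompose2cor}\cref{decompose2corb}, and \cref{rbchange}), so no new work is required, and the final argument is purely formal.
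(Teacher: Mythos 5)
Your proposal is correct and is exactly the paper's proof: the theorem follows by combining \cref{rbin} (diagrams are morphisms built from the generators), \cref{rbcomb} (spanning, resting on \cref{rbcompose}), and \cref{rbindependent} (independence via the functor to $\Par(t)$). Your additional commentary on where the real work lies matches the structure of the paper's preceding propositions, so no further argument is needed.
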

\begin{proof}The proof is a combination of \cref{rbin}, \cref{rbcomb}, and \cref{rbindependent}.
\end{proof}

\begin{cor} Fix $k\in\N$. The endomorphism algebra $\End_{\RB(t)}(k)$ is the \emph{rook-Brauer algebra} given in \cref{introduction}.
\end{cor}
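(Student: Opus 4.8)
The plan is to deduce this directly from the basis theorem \cref{rbbasistheorem}, exactly as was done in the analogous corollaries for $\PR(t)$ and $\cR(t)$. First I would specialize \cref{rbbasistheorem} to the case $l=k$. This shows that the endomorphism algebra $\End_{\RB(t)}(k)=\Hom_{\RB(t)}(k,k)$ is a free $\kk$-module whose basis is the set of all rook-Brauer diagrams of type $\binom{k}{k}$. This is precisely the underlying $\kk$-module of the rook-Brauer algebra $RB_k(t)$ defined in \cref{introduction}, which by definition is the span in $P_k(t)$ of all partition diagrams of type $\binom{k}{k}$ whose blocks have size at most $2$, i.e.\ exactly the rook-Brauer diagrams of type $\binom{k}{k}$.

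The only remaining point is to check that the two algebra structures agree, not merely the underlying modules. The multiplication on $\End_{\RB(t)}(k)$ is the vertical composition $\circ$ of the category $\RB(t)$. By \cref{rbcompose}, the composite of two rook-Brauer diagrams is a power of $t$ times a rook-Brauer diagram, and under the well-defined functor $H\colon\RB(t)\to\Par(t)$ constructed in \cref{rbindependent} this composition is sent to the composition in $\Par(t)$, which on diagrams of type $\binom{k}{k}$ is precisely the multiplication $D\circ D'=t^{\alpha(D,D')}D\star D'$ of the partition algebra $P_k(t)$. Since $RB_k(t)$ is by definition the subalgebra of $P_k(t)$ carrying this same multiplication, the two products coincide on the common basis of rook-Brauer diagrams, and hence on all of the algebra by bilinearity.

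I expect no genuine obstacle here, as all the substantive work is carried out in \cref{rbbasistheorem}; the corollary is a routine specialization together with the observation that the categorical composition restricts to the partition-algebra product. The one place meriting a moment's care is that the identification of bases must be compatible with multiplication, which is guaranteed by the injectivity of $H$ on these morphism spaces furnished by the linear independence established in \cref{rbindependent}. Accordingly, the map sending each rook-Brauer diagram of type $\binom{k}{k}$ to the corresponding element of $RB_k(t)$ is an isomorphism of $\kk$-algebras.
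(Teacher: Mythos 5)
Your proof is correct and takes essentially the same route as the paper, which deduces this corollary directly from \cref{rbbasistheorem}. The only difference is that you explicitly verify the compatibility of the two multiplications via the functor $H$ and \cref{rbcompose}, a point the paper leaves implicit in its one-line proof.
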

\begin{proof}
  This is a corollary of \cref{rbbasistheorem}.
\end{proof}

\section{The Motzkin category $\cM(t)$\label{motzkincategorysection}}

We first discuss the Temperley-Lieb category $\TL(t)$ following \cite{Che14}, then we define the Motzkin category $\cM(t)$.

\subsection{The Temperley-Lieb category $\TL(t)$}
\begin{defin}\label{tlcategory}
  Fix $t\in\kk$, the Temperley-Lieb category $\TL(t)$ is the subcategory of the partition category $\Par(t)$ that has the same objects as $\Par(t)$ and morphism space $\Hom_{\TL(t)}(k,l)$ consisting all formal $\kk$-linear combinations of Temperley-Lieb diagrams of type $\binom{l}{k}$.
\end{defin}

Fix $k\in\N$. The endomorphism algebra $\End_{\TL(t)}(k)$ is the \emph{Temperley-Lieb algebra} $TL_k(t)$ given in \cref{introduction}.

\begin{theo}\label{tlbasistheorem}
  As a strict $\kk$-linear monoidal category, the Temperley-Lieb category $\TL(t)$ is generated by the object $1$ and the morphisms
  \[
    d\ =\ 
    \begin{tikzpicture}[anchorbase]
      \pd{0,0};
      \pd{0.7,0};
      \draw (0,0) to[out=up,in=up,looseness=2] (0.7,0);
    \end{tikzpicture}
    \colon 2 \to 0, \quad 
    c\ =\ 
    \begin{tikzpicture}[anchorbase]
      \pd{0,0};
      \pd{0.7,0};
      \draw (0,0) to[out=down,in=down,looseness=2] (0.7,0);
    \end{tikzpicture}
   \colon 0 \to 2,\quad
  \]
  subject to the following relations:
  \[
    \begin{tikzpicture}[anchorbase]
      \draw (0,0.7) to (0,0) to[out=down,in=down,looseness=1.6] (0.7,0) to[out=up,in=up,looseness=1.8] (1.4,0) to (1.4,-0.7);
    \end{tikzpicture}
    \ =\ 
    \begin{tikzpicture}[anchorbase]
      \draw (0,0) to (0,0.7);
    \end{tikzpicture}
    \ =\ 
    \begin{tikzpicture}[anchorbase]
      \draw (0,-0.7) to (0,0) to[out=up,in=up,looseness=1.6] (0.7,0) to[out=down,in=down,looseness=1.8] (1.4,0) to (1.4,0.7);
    \end{tikzpicture}
    \ ,\qquad
    \begin{tikzpicture}[anchorbase]
      \draw (0,0) to[out=up,in=up,looseness=1.8] (0.7,0) to[out=down,in=down,looseness=1.8] (0,0);
    \end{tikzpicture}
    \ =\ 
    t1_{\one}
  \]
\end{theo}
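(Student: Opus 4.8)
The plan is to follow exactly the strategy used for $\PR(t)$, $\cR(t)$, and $\RB(t)$: show that Temperley--Lieb diagrams both span and are linearly independent in every morphism space. Write $\mathcal{A}$ for the strict $\kk$-linear monoidal category presented by the generators $d,c$ and the two relations of the statement, so that the theorem amounts to identifying $\mathcal{A}$ with the subcategory $\TL(t)\subseteq\Par(t)$ of \cref{tlcategory}.

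First I would prove a generation statement, the analogue of \cref{prin}, \cref{rookin}, and \cref{rbin}: every Temperley--Lieb diagram of type $\binom{l}{k}$ can be built from $d$, $c$, and the identity $1_1$ by $\otimes$ and $\circ$. One argues by induction on the number of cups and caps; reading a planar non-crossing matching from bottom to top, it decomposes as a composite of elementary morphisms, each of which is $c$ or $d$ tensored on the left and right with suitable identity morphisms $1_1$, and composing these in order reproduces the given diagram.

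Next is the composition-closure lemma, the analogue of \cref{prcompose}, \cref{rookcompose}, and \cref{rbcompose}: if $D$ and $D'$ are Temperley--Lieb diagrams, then $D\circ D'$ equals a power of $t$ times a Temperley--Lieb diagram. Stacking $D$ on $D'$ produces a planar diagram in which the middle row creates some zig-zags and some closed loops; the snake relations (the first relation of the statement) straighten every zig-zag, and the circle relation $d\circ c = t\,1_{\one}$ (the second relation) removes each closed loop at the cost of a factor $t$. What remains is exactly $t^{\alpha(D,D')}(D\star D')$, reproducing the composition rule of $\Par(t)$, and in particular a genuine Temperley--Lieb diagram, since planarity and the condition that all blocks have size $2$ are preserved. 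Combining generation with this lemma shows, as in \cref{prcomb}, that every morphism of $\mathcal{A}$ is a $\kk$-linear combination of Temperley--Lieb diagrams.

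Finally, for linear independence I would define a strict $\kk$-linear monoidal functor $\mathcal{A}\to\Par(t)$ sending $1\mapsto 1$, $d\mapsto d$, $c\mapsto c$; it is well defined because both relations of the statement already hold among partition diagrams in $\Par(t)$. This functor sends distinct Temperley--Lieb diagrams to the distinct, and linearly independent, Temperley--Lieb diagrams in $\Par(t)$, so the source diagrams are linearly independent, exactly as in \cref{prindependent}, \cref{rookindependent}, and \cref{rbindependent}. Together with the spanning statement this shows that the Temperley--Lieb diagrams form a basis of each $\Hom_{\mathcal{A}}(k,l)$ and that the functor restricts to an isomorphism onto $\TL(t)$, which is the asserted presentation. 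The main obstacle is the composition-closure lemma: one must verify that the snake and circle relations suffice to reduce an arbitrary planar stacking of generators to a single scalar multiple of a Temperley--Lieb diagram, i.e. that no crossing is ever forced and that every closed loop can indeed be collapsed using only the two given relations.
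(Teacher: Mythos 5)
Your proposal is correct and follows essentially the same route the paper takes: the paper's proof of this theorem simply says it can be proved either as in Kauffman's presentation of the Temperley--Lieb algebras or by the method of \cref{prbasistheorem}, and your three-step argument (generation by $d$ and $c$, composition-closure via the snake and loop relations, then linear independence via the monoidal functor to $\Par(t)$) is exactly that second method, fleshed out in the same way the paper handles $\PR(t)$, $\cR(t)$, and $\RB(t)$. Your closing remark correctly identifies where the real work lies --- verifying that the two stated relations suffice to collapse any planar stacking to a scalar times a Temperley--Lieb diagram --- which is precisely the content the paper defers to Kauffman rather than proving in detail.
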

\begin{proof}
  The proof is similar to the proof of \cite[Th.~4.3]{Kau90}, which gives a presentation of the Temperley-Lieb algebras. Alternatively, we can prove this theorem using a method similar to the one used in the proof of \cref{prbasistheorem}.
\end{proof}

\subsection{The Motzkin category $\cM(t)$}
We have defined Motzkin diagrams in \cref{introduction}. Now, let us define the Motzkin category $\cM(t)$.

\begin{defin}\label{motzkincategory}
  Fix $t\in\kk$. We define the Motzkin category $\cM(t)$ to be the strict $\kk$-linear monoidal category generated by the object $1$ and the morphisms
  \begin{equation}\label{motzkingenerator}
    d\ =\ 
    \begin{tikzpicture}[anchorbase]
      \pd{0,0};
      \pd{0.7,0};
      \draw (0,0) to[out=up,in=up,looseness=2] (0.7,0);
    \end{tikzpicture}
    \colon 2 \to 0, \quad 
    c\ =\ 
    \begin{tikzpicture}[anchorbase]
      \pd{0,0};
      \pd{0.7,0};
      \draw (0,0) to[out=down,in=down,looseness=2] (0.7,0);
    \end{tikzpicture}
   \colon 0 \to 2,\quad
   \eta =
   \begin{tikzpicture}[anchorbase]
     \pd{0,0.5};
     \draw(0,0.25) to (0,0.5);
   \end{tikzpicture}
   \ \colon 0 \to 1,\quad
   \varepsilon =
   \begin{tikzpicture}[anchorbase]
     \pd{0,0};
     \draw (0,0.25) to (0,0);
   \end{tikzpicture}
   \ \colon 1 \to 0,
  \end{equation}
  subject to the following relations and their transforms under $^*$ and $^{\sharp}$:
  \begin{equation}\label{M1}
    \begin{tikzpicture}[anchorbase]
      \pd{0,0};\pd{0.7,0};
      \pd{0,-0.7};
      \draw (0,-0.7) to (0,0) to[out=up,in=up,looseness=2] (0.7,0) to (0.7,-0.15);
    \end{tikzpicture}
    \ =\ 
    \begin{tikzpicture}[anchorbase]
      \pd{0,0};
      \draw (0,0) to (0,-0.15);
    \end{tikzpicture}
    \ ,\qquad
    \begin{tikzpicture}[anchorbase]
      \pd{1.4,1.4};
      \pd{0,0.7};\pd{0.7,0.7};\pd{1.4,0.7};
      \pd{0,0};
      \draw (0,0.7) to[out=up,in=up,looseness=2] (0.7,0.7);
      \draw (0.7,0.7) to[out=down,in=down,looseness=2] (1.4,0.7);
      \draw (0,0) to (0,0.7);
      \draw (1.4,0.7) to (1.4,1.4);
    \end{tikzpicture}
    \ =\ 
    \begin{tikzpicture}[anchorbase]
     \pd{0,0.7};
     \pd{0,0};
     \draw (0,0) to (0,0.7);
    \end{tikzpicture}
    \ ,
  \end{equation}
  \begin{equation}\label{M2}
  \begin{tikzpicture}[anchorbase]
    \pd{0,0};\pd{0.7,0};
    \draw (0,0) to[out=up,in=up,looseness=1.8] (0.7,0) to[out=down,in=down,looseness=1.8] (0,0);
  \end{tikzpicture}
  \ =\ 
  \begin{tikzpicture}[anchorbase]
    \pd{0,0};
    \draw (0,-0.15) to (0,0) to (0,0.15);
  \end{tikzpicture}
  \ =\ 
  t1_0
  \ .
\end{equation}
\end{defin}

\begin{prop}\label{motzkinin}
  Every Motzkin diagram can be built by compositions and tensor products of the generators $\{d,c,\eta,\varepsilon\}$ given in \cref{motzkingenerator}. 
\end{prop}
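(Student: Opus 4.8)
The plan is to follow the template established in the proofs of \cref{rookin} and \cref{rbin}, suitably adapting the structural decomposition and the list of generators. The guiding observation is that a Motzkin diagram factors as a planar rook diagram, followed by its Temperley-Lieb skeleton, followed by a second planar rook diagram; since each factor is generated by the available morphisms, the claim reduces to composing and tensoring these pieces.

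Concretely, I would proceed in three steps. First, apply \cref{tlbasistheorem} to express every Temperley-Lieb diagram as a composition and tensor product of the cap $d$ and cup $c$. Second, apply \cref{prin} to express every planar rook diagram as a tensor product of $\eta$ and $\varepsilon$. Third, invoke \cref{partitiondecomposecor}\cref{decompose1c}, which states that an arbitrary Motzkin diagram $D$ decomposes as $D = P_1 \circ T \circ P_2$ with $T$ a Temperley-Lieb diagram and $P_1, P_2$ planar rook diagrams. Concatenating these observations exhibits $D$ as a composition and tensor product of $\{d,c,\eta,\varepsilon\}$.

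The main subtlety to verify is that the factorization $D = P_1 \circ T \circ P_2$ genuinely lives in $\cM(t)$, which possesses no braiding generator $s$. This is precisely where the Motzkin case departs from \cref{rookin} and \cref{rbin}: there one freely uses the decompositions of \cref{decompose2}, which slide isolated vertices and caps through the braiding. Here I would instead rely on the skeleton decomposition of \cref{partitiondecompose}, whose derivation uses only the axioms of a strict $\kk$-linear monoidal category---as stressed in the example immediately following that proposition---and therefore invokes no braiding whatsoever. Hence the factorization is available using only $\{d,c,\eta,\varepsilon\}$, and the generation statement follows. Beyond confirming this braiding-free point I anticipate no real obstacle, as the remaining ingredients are direct quotations of \cref{tlbasistheorem} and \cref{prin}.
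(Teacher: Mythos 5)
Your proposal is correct and takes essentially the same route as the paper: the paper's one-line proof likewise invokes \cref{partitiondecomposecor}\cref{decompose1c} to write a Motzkin diagram as $P_1\circ T\circ P_2$, then generates $T$ via \cref{tlbasistheorem} and $P_1,P_2$ via \cref{prin}. Your additional check that the factorization avoids the braiding---because \cref{partitiondecompose} uses only the axioms of a strict $\kk$-linear monoidal category---is exactly the point left implicit in the paper's argument.
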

\begin{proof}
  The proof, which uses \cref{partitiondecomposecor}\cref{decompose1c}, is analogous to the proof of \cref{prin}.
\end{proof}

\begin{prop}\label{motzkincompose}
  Let $D\colon l\to m$ and $D'\colon k\to l$ be two Motzkin diagrams. Then the diagram $D'':=D\circ D'$ is equal to a power of $t$ times a Motzkin diagram.
\end{prop}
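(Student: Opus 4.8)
The plan is to follow the strategy of \cref{prcompose} and reduce the stacked diagram $\begin{matrix}D\\ D'\end{matrix}$ directly by a sequence of local moves, each of which is an instance of the defining relations \cref{M1} and \cref{M2} (or of one of their transforms under $^*$ and $^{\sharp}$) or of the monoidal category axioms. The key structural observation is that, since $D$ and $D'$ are both planar with all blocks of size at most $2$, every vertex of the stacked diagram has degree at most $2$: a middle vertex lies in at most one block of $D$ and one of $D'$, while each boundary vertex lies in at most one block. Hence every connected component of the stacked diagram is either a simple path with endpoints on the top or bottom boundary, or a cycle, and planarity forces these components to be nested without crossings.

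First I would dispose of the closed components. After straightening, a cycle lying entirely in the middle row is a composite of a cup of $D'$ with a cap of $D$, so it contributes a factor of $t$ by \cref{M2}; similarly a middle vertex that is a singleton of both $D$ and $D'$ is an instance of $\varepsilon\circ\eta$ and contributes a factor of $t$ by \cref{M2}. Each remaining component is a path. Using the snake relation in \cref{M1} (and its transforms) repeatedly, I would straighten every such path into a single through-strand, cup, or cap; using the first relation in \cref{M1}, which absorbs a freshly created point into a cap to produce an $\varepsilon$, I would turn every path that dead-ends at an interior $\eta$ or $\varepsilon$ into a singleton at its surviving boundary endpoint. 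Trivial concatenations of identity strands are handled by the axioms exactly as in the proof of \cref{prcompose}.

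After these moves the diagram has no interior structure: what remains is a collection of through-strands, cups, caps, and singletons on the top and bottom boundaries, multiplied by $t^{\alpha}$, where $\alpha$ is the number of closed components removed. This residual diagram is planar and all of its blocks have size at most $2$, so it is a Motzkin diagram, giving $D''=t^{\alpha}M_0$ as claimed. An alternative route is to invoke the two-sided decomposition $D=P_1\circ T\circ P_2$ of \cref{partitiondecomposecor}\cref{decompose1c}, reduce the two inner planar rook pieces with \cref{prcompose}, and then reduce the resulting sandwich $T\circ Q\circ T'$ of Temperley--Lieb and planar rook diagrams; but this still requires precisely the absorption moves above, since the one-sided decompositions available for rook and rook-Brauer diagrams in \cref{decompose2} have no Motzkin analogue (a Temperley--Lieb factor can never produce boundary singletons). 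The main obstacle is exactly this bookkeeping: verifying that the iterated snake and absorption moves terminate, that each path collapses to a unique Motzkin feature, and that planarity is preserved so that the output is a genuine Motzkin diagram rather than merely a rook-Brauer one. As elsewhere in the paper, I would make this rigorous by carrying out the reduction on a single representative stacked diagram rather than enumerating every configuration.
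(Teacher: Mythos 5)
Your proposal is correct and follows essentially the same route as the paper: the paper's proof likewise reduces the stacked diagram by local substitutions, reusing the four moves from the proof of \cref{prcompose} and adding five more (and their transforms under $^*$ and $^{\sharp}$) that implement exactly your snake-straightening, absorption of dead-ends into singletons via the first relation of \cref{M1}, and removal of bubbles and of $\varepsilon\circ\eta$ pairs as factors of $t$ via \cref{M2}. Your structural remarks (every component is a path or a nested cycle) and your observation that the one-sided decompositions of \cref{decompose2} have no Motzkin analogue are sound, and consistent with the paper's own remark that the planar cases admit this direct rewriting proof while $\cR(t)$ and $\RB(t)$ do not.
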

\begin{proof}
  Apart from the four steps \cref{prreplacea} to \cref{prreplaced} given in the proof of \cref{prcompose}, we add five more local substitutions and their transforms under $^*$ and $^{\sharp}$.
\begin{enumerate}
  \renewcommand{\theenumi}{(\alph{enumi})}
  \setcounter{enumi}{4}
  \item\label{replacee} replace every
  \[
    \ \ \begin{tikzpicture}[anchorbase]
      \pd{0,1};\pd{0.7,1};
      \pd{0,0};\pd{0.7,0};
      \draw (0,1) to (0,0) to[out=down,in=down,looseness=1.8] (0.7,0) to (0.7,1);
    \end{tikzpicture}\ \ 
    \text{with}
    \ \ \begin{tikzpicture}[anchorbase]
      \pd{0,0};\pd{0.7,0};
      \draw (0,0) to[out=down,in=down,looseness=1.8] (0.7,0);
    \end{tikzpicture}\ ,
  \]
\item\label{replacef} replace every
\[
  \ \ \begin{tikzpicture}[anchorbase]
    \pd{0,1};
    \pd{0,0};\pd{0.7,0};
    \draw (0,1) to (0,0) to[out=down,in=down,looseness=1.8] (0.7,0) to (0.7,0.15);
  \end{tikzpicture}\ \ 
  \text{with}
  \ \ \begin{tikzpicture}[anchorbase]
    \pd{0,0};
    \draw (0,0) to (0,-0.15);
  \end{tikzpicture}\ ,
\]
\item\label{replaceg} replace every
\[
  \ \ \begin{tikzpicture}[anchorbase]
    \pd{0,0.7};
    \pd{0,0};\pd{0.7,0};\pd{1.4,0};
    \pd{1.4,-0.7};
    \draw (0,0.7) to (0,0) to[out=down,in=down,looseness=1.8] (0.7,0) to[out=up,in=up,looseness=1.8] (1.4,0) to (1.4,-0.7);
  \end{tikzpicture}\ \ 
  \text{with}
  \ \ \begin{tikzpicture}[anchorbase]
    \pd{0,0.7};
    \pd{0,0};
    \draw (0,0) to (0,0.7);
  \end{tikzpicture}\ ,
\]
\item\label{replaceh} replace every
\[
  \ \ \begin{tikzpicture}[anchorbase]
    \pd{0,0.7};
    \pd{0,0};\pd{0.7,0};\pd{1.4,0};
    \draw (0,0.7) to (0,0) to[out=down,in=down,looseness=1.8] (0.7,0) to[out=up,in=up,looseness=1.8] (1.4,0) to (1.4,-0.15);
  \end{tikzpicture}\ \ 
  \text{with}
  \ \ \begin{tikzpicture}[anchorbase]
    \pd{0,0};
    \draw (0,0) to (0,-0.15);
  \end{tikzpicture},\ 
\]
\item replace every bubble with a scalar multiple $t$.
\end{enumerate}
After steps \cref{prreplacea} to \cref{replaceh}, what we obtain is a power of $t$ times a Motzkin diagram.
\end{proof}

\begin{prop}\label{motzkincomb}
  Every morphism in $\Hom_{\cM(t)}(k,l)$ is a $\kk$-linear combination of Motzkin diagrams of type $\binom{l}{k}$.
\end{prop}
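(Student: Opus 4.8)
The plan is to follow verbatim the template already used for \cref{prcomb}, \cref{rookcomb}, and \cref{rbcomb}, the only new ingredient being \cref{motzkincompose} in place of the corresponding composition lemmas. By the way morphisms are built in a category presented by generators and relations, an arbitrary $f\in\Hom_{\cM(t)}(k,l)$ can be written as a finite $\kk$-linear combination $f=\sum_{i=1}^{n}a_i D_i$, where each $a_i\in\kk$ and each $D_i$ is obtained from the generators $\{d,c,\eta,\varepsilon\}$ by finitely many tensor products and compositions. It therefore suffices to show that each such $D_i$ equals a power of $t$ times a single Motzkin diagram.

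First I would slice each $D_i$ horizontally into layers, writing $D_i=D_i^{1}\circ\cdots\circ D_i^{m_i}$, where every $D_i^j$ is a pure tensor product of the generators (possibly tensored with identity strands so that the domains and codomains of successive layers match). Since each generator $d,c,\eta,\varepsilon$ is itself a Motzkin diagram, and a tensor product of Motzkin diagrams is again a Motzkin diagram, every layer $D_i^j$ is a Motzkin diagram; this is essentially the content of \cref{motzkinin}.

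Next I would collapse the composition. Applying \cref{motzkincompose} to $D_i^{m_i-1}\circ D_i^{m_i}$ replaces it by a power of $t$ times a single Motzkin diagram, and repeating this $m_i-1$ times reduces $D_i$ to $t^{\alpha_i}D_i'$ for some nonnegative integer $\alpha_i$ and some Motzkin diagram $D_i'$. Substituting back yields
\[
  f=\sum_{i=1}^{n}a_i D_i=\sum_{i=1}^{n}a_i t^{\alpha_i}D_i',
\]
which is a $\kk$-linear combination of Motzkin diagrams, as desired.

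Since all the real work is carried by \cref{motzkincompose}, I do not expect a genuine obstacle here: the argument is a routine reduction identical in structure to the planar rook case. The only place requiring minor care is the bookkeeping in the layering step --- ensuring that tensoring the generators with identities to align composable layers does not take us outside the class of Motzkin diagrams --- but this is immediate, since identity strands are themselves Motzkin diagrams.
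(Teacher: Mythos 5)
Your proposal is correct and follows essentially the same route as the paper: the paper's proof of \cref{motzkincomb} is precisely the argument of \cref{prcomb} (decompose each word in the generators into layers that are tensor products of generators, hence Motzkin diagrams, then collapse the composition by repeated use of \cref{motzkincompose}). Your added care about padding layers with identity strands is a harmless refinement, and the only quibble is that the fact that each layer is a Motzkin diagram is an immediate observation rather than the content of \cref{motzkinin}, which asserts the converse direction.
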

\begin{proof}
  The proof, which uses \cref{motzkincompose}, is analogous to the proof of \cref{prcomb}.
\end{proof}

\begin{prop}\label{motzkinindependent}
  All the Motzkin diagrams of type $\binom{l}{k}$ are linearly independent in $\Hom_{\cM(t)}(k,l)$.
\end{prop}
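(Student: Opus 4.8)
The plan is to follow verbatim the strategy of \cref{prindependent}, \cref{rookindependent}, and \cref{rbindependent}: realize $\cM(t)$ inside the partition category $\Par(t)$ by a functor that sends each Motzkin diagram to the corresponding partition diagram, and then use the fact that partition diagrams are linearly independent in $\Par(t)$ to pull back the independence.

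First I would define a strict $\kk$-linear monoidal functor $K\colon\cM(t)\to\Par(t)$ on generators by sending the object $1$ to $1$ and sending $d$, $c$, $\eta$, $\varepsilon$ to the partition diagrams of the same shape in $\Par(t)$, namely the cap $\{\{1,2\}\}\colon 2\to 0$, the cup $\{\{1',2'\}\}\colon 0\to 2$, the singleton $\{\{1'\}\}\colon 0\to 1$, and the singleton $\{\{1\}\}\colon 1\to 0$. To see that $K$ is well-defined, I would verify that the defining relations \cref{M1} and \cref{M2} of $\cM(t)$ hold in $\Par(t)$. The second relation in \cref{M1} and the closed-loop relation in \cref{M2} are exactly the relations of the Temperley-Lieb category $\TL(t)$, which is a subcategory of $\Par(t)$ by \cref{tlcategory}; the remaining relations are the zig-zag interaction of $\eta$ and the cap, and the vertical composition $\varepsilon\circ\eta=t1_0$. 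All of these are immediate consequences of the partition relations \cref{P1}--\cref{P4} (indeed $\varepsilon\circ\eta=t1_0$ is the third part of \cref{P4}), so each relation is satisfied in $\Par(t)$ and $K$ is a legitimate strict $\kk$-linear monoidal functor.

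Next, the functor $K$ induces a $\kk$-module homomorphism on morphism spaces
\[
  K_{k,l}\colon\Hom_{\cM(t)}(k,l)\to\Hom_{\Par(t)}(k,l)
\]
that sends each Motzkin diagram of type $\binom{l}{k}$ to the same diagram regarded as a partition diagram. Since distinct Motzkin diagrams give rise to distinct set partitions, their images under $K_{k,l}$ are distinct partition diagrams, and these form part of the distinguished linear basis of $\Hom_{\Par(t)}(k,l)$; hence the images are linearly independent in $\Par(t)$. Invoking the same elementary fact from linear algebra used in \cref{prindependent} --- that if a $\kk$-linear map sends a collection of elements to a linearly independent set, then the collection is itself linearly independent --- I conclude that all Motzkin diagrams of type $\binom{l}{k}$ are linearly independent in $\Hom_{\cM(t)}(k,l)$. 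I do not expect a genuine obstacle; the only care required is to confirm that the cap and cup of $\cM(t)$ really map to honest partition diagrams and that \cref{M1}--\cref{M2} hold in $\Par(t)$, which is routine since every generator of $\cM(t)$ already lives in $\Par(t)$ and every relation of $\cM(t)$ is a relation (or consequence of the relations) of $\Par(t)$.
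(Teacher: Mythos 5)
Your proposal is correct and follows essentially the same route as the paper: the paper's proof defines exactly such a functor (called $J$ there) from $\cM(t)$ to $\Par(t)$ sending each generator to its counterpart, notes it is well-defined because \cref{M1} and \cref{M2} hold in $\Par(t)$ via \cref{P1,P3,P4}, and then pulls back linear independence from the partition-diagram basis just as in \cref{prindependent}. Your extra care in checking that distinct Motzkin diagrams land on distinct basis elements of $\Hom_{\Par(t)}(k,l)$ is exactly the point the paper leaves implicit, so nothing is missing.
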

\begin{proof}
  We can define a strict $\kk$-linear monoidal functor $J\colon\cM(t)\to\Par(t)$ by
  \[
    1\mapsto 1,\quad d\mapsto d,\quad c\mapsto c,\quad \eta\mapsto\eta,\quad \varepsilon\mapsto\varepsilon.
  \]
  The functor $J$ is well-defined because relations \cref{M1} to \cref{M2} hold in $\Par(t)$ (relations \cref{P1,P3,P4}). The proof is then analogous to the proof of \cref{prindependent}.
\end{proof}

\begin{theo}\label{motzkinbasistheorem}
  The morphism space $\Hom_{\cM(t)}(k,l)$ has a linear basis given by all Motzkin diagrams of type $\binom{l}{k}$.
\end{theo}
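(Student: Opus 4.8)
The plan is to follow the exact same three-part strategy used for the planar rook category (Theorem~\ref{prbasistheorem}), the rook category (Theorem~\ref{rookbasistheorem}), and the rook-Brauer category (Theorem~\ref{rbbasistheorem}): establish that Motzkin diagrams \emph{span} the morphism spaces, and separately that they are \emph{linearly independent}, then combine the two. Indeed, all three ingredients have already been assembled in the preceding propositions, so the proof is essentially a bookkeeping step.

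First I would invoke \cref{motzkinin}, which shows that every Motzkin diagram of type $\binom{l}{k}$ can be built from the generators $\{d,c,\eta,\varepsilon\}$, so these diagrams live in $\Hom_{\cM(t)}(k,l)$. Next, \cref{motzkincomb} shows that every morphism in $\Hom_{\cM(t)}(k,l)$ is a $\kk$-linear combination of Motzkin diagrams of that type; this is the spanning statement, and it rests on the closure-under-composition result \cref{motzkincompose} (composition of two Motzkin diagrams is a power of $t$ times a Motzkin diagram) in the same way that \cref{prcomb} rested on \cref{prcompose}. Finally, \cref{motzkinindependent} supplies linear independence via the well-defined functor $J\colon\cM(t)\to\Par(t)$, which sends each Motzkin diagram to the corresponding (honest, linearly independent) Motzkin diagram inside the partition category; since a linear map that carries a set to a linearly independent set must have had that set independent to begin with, the Motzkin diagrams are independent in $\Hom_{\cM(t)}(k,l)$.

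The combination is then immediate: spanning plus linear independence is exactly the statement that the Motzkin diagrams of type $\binom{l}{k}$ form a basis of $\Hom_{\cM(t)}(k,l)$. So the proof body is simply a one-sentence citation, mirroring the proofs of \cref{prbasistheorem,rookbasistheorem,rbbasistheorem}. I expect no genuine obstacle here, since the statement is a formal corollary of the three cited propositions; all of the real work was front-loaded into \cref{motzkincompose} (the most technical piece, requiring the additional local substitutions \cref{replacee}--\cref{replaceh} to handle caps, cups, and bubbles) and into the well-definedness check for $J$ in \cref{motzkinindependent}. The only thing to be careful about is that the functor $J$ genuinely respects relations \cref{M1,M2}, which is exactly the content verified in \cref{motzkinindependent} by matching them against \cref{P1,P3,P4} in $\Par(t)$.

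\begin{proof}
  The proof is a combination of \cref{motzkinin}, \cref{motzkincomb}, and \cref{motzkinindependent}.
\end{proof}
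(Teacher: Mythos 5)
Your proposal is correct and matches the paper's proof exactly: both cite \cref{motzkinin}, \cref{motzkincomb}, and \cref{motzkinindependent} and combine them as spanning plus linear independence. Your surrounding commentary accurately identifies where the real work lies (\cref{motzkincompose} and the well-definedness of $J$), so there is nothing to add.
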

\begin{proof}
  The proof is a combination of \cref{motzkinin}, \cref{motzkincomb}, and \cref{motzkinindependent}.
\end{proof}
\begin{cor} Fix $k\in\N$. The endomorphism algebra $\End_{\cM(t)}(k)$ is the \emph{Motzkin algebra} $M_k(t)$ given in \cref{introduction}.
\end{cor}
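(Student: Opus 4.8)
The plan is to deduce this directly from the basis theorem, exactly as was done for the planar rook, rook, and rook-Brauer categories. First I would note that $\End_{\cM(t)}(k)$ is by definition the morphism space $\Hom_{\cM(t)}(k,k)$, which by \cref{motzkinbasistheorem} is a free $\kk$-module with basis the set of all Motzkin diagrams of type $\binom{k}{k}$. This already matches the $\kk$-module underlying $M_k(t)$, so the entire content of the corollary is to identify the two \emph{algebra} structures.

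Next I would use the functor $J\colon\cM(t)\to\Par(t)$ constructed in the proof of \cref{motzkinindependent}. Since $J$ is a strict $\kk$-linear monoidal functor, it restricts to a $\kk$-algebra homomorphism
\[
  J_{k,k}\colon\End_{\cM(t)}(k)\to\End_{\Par(t)}(k)=P_k(t),
\]
which carries each Motzkin diagram to the identical Motzkin diagram viewed inside $P_k(t)$. The proof of \cref{motzkinindependent} shows precisely that the images of these basis diagrams are linearly independent, so $J_{k,k}$ is injective; and its image is the $\kk$-span of all Motzkin diagrams of type $\binom{k}{k}$, which is by definition the Motzkin algebra $M_k(t)$.

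Finally I would conclude that $J_{k,k}$ is an isomorphism of $\kk$-algebras onto $M_k(t)$: being an injective algebra homomorphism whose image is exactly $M_k(t)$, it is a bijection onto $M_k(t)$ that preserves multiplication. The only point to verify is that the product on $\End_{\cM(t)}(k)$ agrees with the partition-algebra product on $M_k(t)$, but this is immediate, since composition is preserved by $J$ and the multiplication of $M_k(t)$ is by definition the restriction of that of $P_k(t)$. I expect no real obstacle here; the statement is a formal consequence of \cref{motzkinbasistheorem}, in complete parallel with the corresponding corollaries already established for $\PR(t)$, $\cR(t)$, and $\RB(t)$.
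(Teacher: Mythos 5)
Your proposal is correct and follows the same route as the paper, which simply deduces the corollary from \cref{motzkinbasistheorem}; you have just spelled out the implicit details (using the functor $J$ from \cref{motzkinindependent} to match the two multiplications). No gaps — the identification of the algebra structures via $J_{k,k}$ is exactly the right way to make the paper's one-line proof precise.
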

\begin{proof}
  This is a corollary of \cref{motzkinbasistheorem}.
\end{proof}

\begin{rem}
  \begin{enumerate}
    \item Since the Motzkin category $\cM(t)$ can also be viewed as the planar rook-Brauer category, we can organize the last two sections of this paper in another way. Namely, we can exchange \cref{rbcategorysection} and \cref{motzkincategorysection}. In this case, we can use the decomposition given in \cref{decompose2}\cref{decompose2c} instead of \cref{decompose2b} to prove \cref{rbin}.
    \item Because morphisms in the categories $\PR(t)$ and $\cM(t)$ are planar partition diagrams, the proofs of \cref{prcompose} and \cref{motzkincompose} are very straightforward, using only \cref{prcategory} and \cref{motzkincategory}. However, for the categories $\cR(t)$ and $\RB(t)$, whose morphisms are not necessarily planar, the proofs of \cref{rookcompose} and \cref{rbcompose} are more complicated, involving the decompositions given in \cref{decompose2} and \cref{decompose2cor}.
  \end{enumerate}
\end{rem}

\bibliographystyle{alphaurl}
\bibliography{ref}

\begin{thebibliography}{{Che}14}

\bibitem[BH14]{Ben14}
Georgia Benkart and Tom Halverson.
\newblock Motzkin algebras.
\newblock {\em European J. Combin.}, 36:473--502, 2014.
\newblock \href {http://dx.doi.org/10.1016/j.ejc.2013.09.010}
  {\path{doi:10.1016/j.ejc.2013.09.010}}.

\bibitem[{Che}14]{Che14}
Joshua {Chen}.
\newblock {The Temperley-Lieb categories and skein modules}.
\newblock 2014.
\newblock \href {http://arxiv.org/abs/1502.06845} {\path{arXiv:1502.06845}}.

\bibitem[Com16]{Com16}
Jonathan Comes.
\newblock Jellyfish partition categories.
\newblock {\em Algebr. Represent. Theory}, pages 1--21, 2016.
\newblock \href {http://dx.doi.org/10.1007/s10468-018-09851-7}
  {\path{doi:10.1007/s10468-018-09851-7}}.

\bibitem[Del07]{Del07}
P.~Deligne.
\newblock La cat\'{e}gorie des repr\'{e}sentations du groupe sym\'{e}trique
  {$S_t$}, lorsque {$t$} n'est pas un entier naturel.
\newblock In {\em Algebraic groups and homogeneous spaces}, volume~19 of {\em
  Tata Inst. Fund. Res. Stud. Math.}, pages 209--273. Tata Inst. Fund. Res.,
  Mumbai, 2007.
\newblock URL: \url{http://www.math.ias.edu/files/deligne/Symetrique.pdf}.

\bibitem[Hd14]{Hal14}
Tom Halverson and Elise delMas.
\newblock Representations of the {R}ook-{B}rauer algebra.
\newblock {\em Comm. Algebra}, 42(1):423--443, 2014.
\newblock \href {http://dx.doi.org/10.1080/00927872.2012.716120}
  {\path{doi:10.1080/00927872.2012.716120}}.

\bibitem[Her06]{Her06}
Kathryn~E Herbig.
\newblock The planar rook monoid.
\newblock 2006.
\newblock URL: \url{https://digitalcommons.macalester.edu/mathcs_honors/1/}.

\bibitem[HR05]{Hal05}
Tom Halverson and Arun Ram.
\newblock Partition algebras.
\newblock {\em European J. Combin.}, 26(6):869--921, 2005.
\newblock \href {http://dx.doi.org/10.1016/j.ejc.2004.06.005}
  {\path{doi:10.1016/j.ejc.2004.06.005}}.

\bibitem[Jon94]{Jon94}
V.~F.~R. Jones.
\newblock The {P}otts model and the symmetric group.
\newblock In {\em Subfactors ({K}yuzeso, 1993)}, pages 259--267. World Sci.
  Publ., River Edge, NJ, 1994.

\bibitem[Kas95]{Kas95}
Christian Kassel.
\newblock {\em Quantum groups}, volume 155 of {\em Graduate Texts in
  Mathematics}.
\newblock Springer-Verlag, New York, 1995.
\newblock \href {http://dx.doi.org/10.1007/978-1-4612-0783-2}
  {\path{doi:10.1007/978-1-4612-0783-2}}.

\bibitem[Kau90]{Kau90}
Louis~H. Kauffman.
\newblock An invariant of regular isotopy.
\newblock {\em Trans. Amer. Math. Soc.}, 318(2):417--471, 1990.
\newblock \href {http://dx.doi.org/10.2307/2001315}
  {\path{doi:10.2307/2001315}}.

\bibitem[KM06]{Kud06}
Ganna Kudryavtseva and Volodymyr Mazorchuk.
\newblock On presentations of {B}rauer-type monoids.
\newblock {\em Cent. Eur. J. Math.}, 4(3):413--434, 2006.
\newblock \href {http://dx.doi.org/10.2478/s11533-006-0017-6}
  {\path{doi:10.2478/s11533-006-0017-6}}.

\bibitem[Koc04]{Koc04}
Joachim Kock.
\newblock {\em Frobenius algebras and 2{D} topological quantum field theories},
  volume~59 of {\em London Mathematical Society Student Texts}.
\newblock Cambridge University Press, Cambridge, 2004.
\newblock URL: \url{http://mat.uab.es/~kock/TQFT.html}.

\bibitem[Lip96]{Lip96}
Stephen Lipscomb.
\newblock {\em Symmetric inverse semigroups}, volume~46 of {\em Mathematical
  Surveys and Monographs}.
\newblock American Mathematical Society, Providence, RI, 1996.
\newblock \href {http://dx.doi.org/10.1090/surv/046}
  {\path{doi:10.1090/surv/046}}.

\bibitem[{Liu}18]{Liu18}
Bingyan {Liu}.
\newblock {Presentations of linear monoidal categories and their endomorphism
  algebras}.
\newblock 2018.
\newblock \href {http://arxiv.org/abs/1810.10988} {\path{arXiv:1810.10988}}.

\bibitem[LZ15]{Leh15}
G.~I. Lehrer and R.~B. Zhang.
\newblock The {B}rauer category and invariant theory.
\newblock {\em J. Eur. Math. Soc. (JEMS)}, 17(9):2311--2351, 2015.
\newblock \href {http://dx.doi.org/10.4171/JEMS/558}
  {\path{doi:10.4171/JEMS/558}}.

\bibitem[Mar94]{Mar94}
Paul Martin.
\newblock Temperley-{L}ieb algebras for nonplanar statistical mechanics---the
  partition algebra construction.
\newblock {\em J. Knot Theory Ramifications}, 3(1):51--82, 1994.
\newblock \href {http://dx.doi.org/10.1142/S0218216594000071}
  {\path{doi:10.1142/S0218216594000071}}.

\bibitem[NSR19]{Sav19}
Samuel {Nyobe Likeng}, Alistair {Savage}, and appendix with~Christopher {Ryba}.
\newblock {Embedding Deligne's category $\mathrm{Rep}(S_t)$ in the Heisenberg
  category}.
\newblock 2019.
\newblock \href {http://arxiv.org/abs/1905.05620} {\path{arXiv:1905.05620}}.

\bibitem[{Sav}18]{Sav18}
Alistair {Savage}.
\newblock {String diagrams and categorification}.
\newblock In {\em Interactions of Quantum Affine Algebras with Cluster
  Algebras, Current Algebras and Categorification}, Progress in Mathematics.
  Birkh\"{a}user, 2018.
\newblock To appear.
\newblock \href {http://arxiv.org/abs/1806.06873} {\path{arXiv:1806.06873}}.

\bibitem[Sch01]{Sch01}
Peter Schauenburg.
\newblock Turning monoidal categories into strict ones.
\newblock {\em New York J. Math.}, 7:257--265, 2001.
\newblock URL: \url{http://nyjm.albany.edu/j/2001/7-16.pdf}.

\bibitem[TV17]{Tur17}
Vladimir Turaev and Alexis Virelizier.
\newblock {\em Monoidal categories and topological field theory}, volume 322 of
  {\em Progress in Mathematics}.
\newblock Birkh\"{a}user/Springer, Cham, 2017.
\newblock \href {http://dx.doi.org/10.1007/978-3-319-49834-8}
  {\path{doi:10.1007/978-3-319-49834-8}}.

\end{thebibliography}

\end{document}